\title{Simple zeros of $\GL(2)$ $L$-functions}
\author{Alexandre de Faveri}
\address{Stanford University, Stanford, CA 94305, USA}
\email{\href{mailto:afaveri@stanford.edu}{afaveri@stanford.edu}}
\newcommand{\dd}{\, d}
\newcommand{\R}{\mathbb{R}}
\renewcommand{\H}{\mathbb{H}}
\newcommand{\C}{\mathbb{C}}
\newcommand{\Z}{\mathbb{Z}}
\newcommand{\Q}{\mathbb{Q}}
\renewcommand{\L}{\mathcal{L}}
\renewcommand{\S}{\mathcal{S}}
\newcommand{\eps}{\varepsilon}
\newcommand{\rn}{\epsilon}
\newcommand{\1}{\mathbbm{1}}
\newcommand{\Mod}[1]{\ (\mathrm{mod}\ #1)}
\DeclareMathOperator*{\Res}{Res}
\DeclareMathOperator*{\sgn}{sgn}
\DeclareMathOperator*{\GL}{GL}
\DeclareFontFamily{U}{mathx}{\hyphenchar\font45}
\DeclareFontShape{U}{mathx}{m}{n}{
      <5> <6> <7> <8> <9> <10>
      <10.95> <12> <14.4> <17.28> <20.74> <24.88>
      mathx10
      }{}
\DeclareSymbolFont{mathx}{U}{mathx}{m}{n}
\DeclareMathAccent{\widecheck}{0}{mathx}{"71}
\DeclareMathAccent{\wideparen}{0}{mathx}{"75}
\newtheorem{theorem}{Theorem}
\newtheorem{lemma}[theorem]{Lemma}
\newtheorem{proposition}[theorem]{Proposition}
\newtheorem{corollary}[theorem]{Corollary}
\newtheorem{remark}[theorem]{Remark}
\numberwithin{theorem}{section}
\numberwithin{lemma}{section}
\numberwithin{proposition}{section}
\numberwithin{corollary}{section}
\numberwithin{definition}{section}
\numberwithin{property}{section}
\numberwithin{remark}{section}
\numberwithin{conjecture}{section}
\numberwithin{equation}{section}
\begin{document}

\begin{abstract}
    Let $f \in S_k(\Gamma_1(N))$ be a primitive holomorphic form of arbitrary weight $k$ and level $N$. We show that the completed $L$-function of $f$ has $\Omega\left(T^\delta\right)$ simple zeros with imaginary part in $\left[-T, T\right]$, for any $\delta < \frac{2}{27}$. This is the first power bound in this problem for $f$ of non-trivial level, where previously the best results were $\Omega(\log\log\log{T})$ for $N$ odd, due to Booker, Milinovich, and Ng, and infinitely many simple zeros for $N$ even, due to Booker. In addition, for $f$ of trivial level ($N=1$), we also improve an old result of Conrey and Ghosh on the number of simple zeros.
\end{abstract}

\maketitle


\section{Introduction}

\subsection{Discussion}

Let $\pi$ be a cuspidal automorphic representation of $\GL(n, \mathbb{A}_\Q)$ with completed $L$-function $\Lambda_\pi$. It is conjectured that all the zeros of $\Lambda_\pi(s)$ are on the critical line $\Re(s) = \frac{1}{2}$ and, apart from at most one multiple zero of algebraic origin, are all simple. For degree $n=1$ (Dirichlet $L$-functions), Levinson's method \cite{Le74, He79, Ba00, Yo10} shows that a positive proportion of the zeros are simultaneously simple and on the critical line. An adaptation of that method for degree $n=2$ also implies that a positive proportion of the zeros are on the critical line \cite{AT20}, but already cannot tackle simple zeros and only shows that a positive proportion of the zeros are of order at most three \cite{Fa94}. 

In this paper we consider the problem of obtaining lower bounds for the number of simple zeros in the case of degree $n=2$. Let $f \in S_k(\Gamma_1(N))$ be a primitive form (i.e.\ a normalized Hecke newform) of arbitrary weight $k$ and level $N$. The first challenge is to show that $\Lambda_f$ has any simple zeros at all. While for a given $f$ this can be checked computationally, the problem was only completely solved in 2012, after a breakthrough of Booker \cite{Bo16}, who in fact showed that $\Lambda_f$ has infinitely many simple zeros. The argument relies on simple zeros of local factors of $\Lambda_f$, thus differentiating it from counterexamples such as the square of a degree one $L$-function. Another key ingredient in Booker's method is non-vanishing of automorphic $L$-functions on the line $\Re(s) = 1$, more specifically applied to multiplicative twists of $f$, foreshadowing an important obstruction in the method.

With Booker's result in hand, the next challenge is to obtain quantitative bounds on the number of simple zeros of $\Lambda_f$. Here one runs into issues related to the level $N$ that are somewhat reminiscent of the difficulties in extending the Hecke converse theorem to general level. As in Weil's generalization of the converse theorem, an important tool are the twists of $f$ by multiplicative characters. However, in our case an obstruction remains. It roughly consists of the possibility that $\Lambda_f(s)$ has simple zeros arbitrarily close to the line $\Re(s) = 1$, and in addition that a certain conspiracy between additive twists of $f$ happens at those simple zeros --- namely that \eqref{delta_twist_simplified_eq} below does not have a pole at any of those simple zeros, for any choice of $\alpha \in \Q^\times$.

Let
\begin{equation*}
    N_f^s(T) := \left|\left\{\rho \in \C : |\Im(\rho)| \leq T \text{ and } \rho \text{ is a simple zero of } \Lambda_f \right\}\right|
\end{equation*}
denote the number of simple zeros of $\Lambda_f$ with imaginary part in $[-T, T]$. For the case of full level ($N=1$), it is easy to directly check that no widespread pole cancellation in \eqref{delta_twist_simplified_eq} can happen. In a paper from 1988 which introduced ideas used in most subsequent works on this topic, Conrey and Ghosh \cite{CG88} showed that if $f = \Delta$ is the Ramanujan function, then $N_f^s(T) = \Omega(T^{\frac{1}{6} - \eps})$ for any $\eps > 0$. Their method applies to any $f$ of level $N=1$, as long as one assumes the existence of at least one simple zero for $\Lambda_f$ (which they verified for $f = \Delta$, and is now known to hold in general due to Booker's work).

For general level $N$, Booker, Milinovich, and Ng \cite{BMN19} recently showed that there exists an unspecified Dirichlet character $\chi$, possibly depending on $f$, such that $N_{f\otimes \chi}^s(T) = \Omega(T^{\frac{1}{6}-\eps})$ for any $\eps > 0$ (see also \cite{CIS13} for a strong result on simple zeros of twists of $f$).  In the same paper, the authors also used the zero-free region of $\Lambda_f$ to slightly limit where pole cancellations in \eqref{delta_twist_simplified_eq} can happen. As a result, they made Booker's result quantitative for $f$ of odd level, showing that $N_f^s(T) = \Omega(\log\log\log{T})$. The restriction $2 \nmid N$ comes from the prominent role played by certain additive twists by $1/2$ in their argument (the use of such twists dates back to the work of Conrey and Ghosh), relying on the fact that there are no non-trivial Dirichlet characters modulo $2$. 

\subsection{Results}

Our main result removes the parity restriction on the level, and rules out complete pole cancellation in \eqref{delta_twist_simplified_eq} on a wide strip, leading to the first power bound for the number of simple zeros of $\Lambda_f$ when $f$ has non-trivial level. 

\begin{theorem}[Power bound for arbitrary level]\label{main_thm}
    Let $f \in S_k(\Gamma_0(N), \xi)$ be a primitive holomorphic modular form of arbitrary weight $k$, level $N$, and nebentypus $\xi$. Then
    \begin{equation*}
        N_f^s(T) = \Omega\left( T^{\delta} \right)
    \end{equation*}
    for any $\delta < \frac{2}{27}$.
\end{theorem}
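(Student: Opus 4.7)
The plan is to build on the Booker--Milinovich--Ng framework by removing the role of the specific additive twist by $1/2$ and widening the range in which the pole conspiracy in \eqref{delta_twist_simplified_eq} can be ruled out. The starting point is a contour integral of the form $\frac{1}{2\pi i}\oint \Lambda_f(s) M(s) H(s) \, ds$, where $M(s)$ is a Dirichlet polynomial mollifier and $H(s)$ a smooth weight localizing to $|\Im(s)| \leq T$. Using the functional equation of $\Lambda_f$ and of its additive twists $L_f(s, \alpha)$ by rationals $\alpha = a/q \in \Q^\times$, this integral can be re-expressed in terms of expressions of the shape of \eqref{delta_twist_simplified_eq}. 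Simple zeros of $\Lambda_f$ contribute through residues, so a lower bound on their number follows from a lower bound on the $\Omega$-type behavior of this principal term.

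To remove the parity restriction of Booker, Milinovich, and Ng, I would take $\alpha = a/q$ with $q$ ranging over a family of moduli coprime to $N$, rather than only $q = 2$. Expanding the additive twists via Gauss sums produces multiplicative twists $L(s, f \otimes \chi)$ for primitive characters $\chi$ modulo $q$, to which we may apply non-vanishing on $\Re(s) = 1$ (Jacquet--Shalika) to block the conspiracy, thus generalizing the qualitative part of Booker's method to arbitrary level. To then upgrade $\Omega(\log\log\log T)$ to a power bound, I would quantify the failure of the conspiracy on the wider strip $1 - \eta \leq \Re(s) \leq 1$, rather than only on the line. Here, zero-density estimates for $\GL(2)$ $L$-functions together with mean-value estimates for $L(s, f \otimes \chi)$ over $\chi$ of conductor $\leq Q$ will constrain how many zeros of $\Lambda_f$ can avoid producing poles in \eqref{delta_twist_simplified_eq}. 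Optimizing the parameters $\eta$, $Q$, and the length of $M$ against the exponent in the zero-density estimate and the subconvexity input available for $\GL(2)\times\GL(1)$ $L$-functions should yield the exponent $\frac{2}{27}$.

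The main obstacle is the quantitative control of the conspiracy on a strip, as opposed to a single line. Specifically, one must carefully trace how the presence of many pole cancellations in \eqref{delta_twist_simplified_eq} (one per conspiring simple zero) propagates through the functional equations of the additive twists, translating the conspiracy into lower bounds for certain twisted character sums, and then showing these lower bounds are incompatible with mean-value estimates for $L(s, f \otimes \chi)$ as $\chi$ varies. A secondary difficulty is the bookkeeping of local factors at bad primes, which must be handled uniformly for $q$ coprime to $N$ so that the parity obstruction is genuinely eliminated rather than shifted to another prime. The delicate interplay between the mollifier support, the size of $Q$, and the strip depth $\eta$ is what should produce the exponent $\frac{2}{27}$, and ensuring this balance survives the various error terms is the crux of the argument.
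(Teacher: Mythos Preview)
Your outline has the right flavor in two places---using additive twists by $a/q$ with $q$ coprime to $N$ instead of $q=2$, and using zero-density for the family $\{f\otimes\chi\}$---but several structural elements of the actual argument are missing or replaced by something that will not work.

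First, the contour integral with a mollifier $M(s)$ is not the right object. The whole method rests on the Dirichlet series $D_f(s)=L_f(s)\bigl(L_f'/L_f\bigr)'(s)$ (and its completion $\Delta_f$), whose poles are \emph{exactly} the simple zeros of $L_f$; simple zeros are detected through the pole structure of $\Delta_f$, not through a mollified mean value. Introducing a mollifier brings you back toward Levinson-type arguments, which, as the introduction notes, do not separate simple zeros from higher-order ones in degree two.

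Second, ``take $q$ coprime to $N$'' does not by itself remove the parity obstruction. The point of $q=2$ in Conrey--Ghosh and in \cite{BMN19} is that there is no nontrivial character, so the twist sum in \eqref{additive_to_mult_simplified_eq} collapses. For general $q$ you must explain why the nontrivial $\Delta_{f\otimes\chi}$ terms do not kill the pole. The paper does this by forming, for a prime $p\equiv 1\pmod N$, the specific linear combination $G_{f,p}(s)=\Delta_{f,1,p}(s)-\Delta_{f,-\overline N,p}(s)$, which satisfies its own $s\leftrightarrow 1-s$ functional equation. One then proves $G_{f,p}$ has a pole in the strip via Booker's argument exploiting \emph{simple zeros of the local Euler factors} $P_{f,q}(q^{-s})$; your proposal does not invoke this mechanism at all, and without it you have no pole to start from.

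Third, the power bound does not come from a three-way optimization of $(\eta,Q,\text{mollifier length})$. The argument is a clean dichotomy on $\theta_f=\sup\{\Re(\rho):\rho\text{ simple zero of }\Lambda_f\text{ or }\Lambda_{\overline f}\}$. If $\theta_f\le \kappa$ one feeds the pole from the previous paragraph into \autoref{pole_detection_prop} together with Weyl subconvexity to get $N_f^s(T)=\Omega(T^{\frac{1}{3}(1-\theta_f)-\eps})$. If $\theta_f>\kappa$, take a simple zero $\rho$ with $\Re(\rho)>\kappa$ and observe (via \eqref{first_H_diff_eq} and \eqref{Delta_additive_twist_decomposition_eq}) that if $H_{f,\alpha}$ had no pole at $\rho$ for any $\alpha$, then for \emph{every} prime $p\equiv 1\pmod N$ some nontrivial $\chi\pmod p$ would satisfy $\Lambda_{f\otimes\chi}(\rho)=0$; this forces $\gg X/\log X$ zeros among twists of conductor $\le X$, contradicting the zero-density bound once $\Re(\rho)>\kappa$. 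The threshold $\kappa=7/9$ is exactly where the zero-density exponent drops below $1$, and $2/27=\tfrac{1}{3}(1-\tfrac{7}{9})$. Your sketch does not isolate this ``one zero of a twist per prime'' counting argument, which is the heart of the power saving.
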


We obtain a power bound by showing that the aforementioned complete pole cancellation in \eqref{delta_twist_simplified_eq} at a simple zero $\rho$ of $\Lambda_f$ would imply that $\Lambda_{f\otimes\chi}(\rho)=0$ for a large number of characters $\chi$. Such an amount of vanishing can then be ruled out at points $\rho$ close to the line $\Re(s) = 1$, using zero-density results. In order to remove the parity restriction on the level, we get the method started by producing a pole for a certain Dirichlet series via ideas of Booker \cite{Bo16}, instead of relying on the special nature of twists by $1/2$ to do so. See \autoref{sketch_subsection} for a sketch of both arguments.

In \autoref{zero_density_appendix}, we use standard Dirichlet polynomial methods \cite{Mo69a, Mo69b, Hu73, Ju77} to obtain a zero-density bound in degree two which is better in the twist aspect (hence for the application at hand) than other general results from the literature \cite{KM02, LJ19}. It is likely that the exponent in \autoref{main_thm} can be improved by refining this zero-density result, or better yet by dealing directly with non-vanishing at an arbitrary (but fixed) point $\rho$. To be more precise, the problem is to show that the number of primitive characters $\chi \pmod{q}$ with $q \leq Q$ such that $\Lambda_{f\otimes \chi}(\rho) = 0$ is $o\left(Q/\log{Q}\right)$. Using \autoref{zero_density_prop} we obtain this result as long as $\Re(\rho) > \frac{7}{9}$, and the challenge is to enlarge such a half-plane (for instance, the density hypothesis for the family of twists of $f$ would allow one to replace $\frac{7}{9}$ with $\frac{3}{4}$). This type of non-vanishing problem for families has received considerable attention at the central point \cite{So00, KMV00, IS00}, but much less seems to be known in general, and we hope that providing an application will lead to further study. An important feature is that we require more than a $100\%$ rate of non-vanishing, and in fact wish to rule out a thin set of zeros, of size less than the square-root of the size of the family.

Finally, we also improve the exponent in the result of Conrey and Ghosh \cite{CG88} from $1/6$ to $1/5$.

\begin{theorem}[Improved exponent for full level]\label{level_1_thm}
    Let $f \in S_k(\Gamma_0(1))$ be a primitive holomorphic modular form of arbitrary weight $k$ for the full modular group. Then
    \begin{equation*}
        N_f^s(T) = \Omega\left( T^{\nu} \right)
    \end{equation*}
    for any $\nu < \frac{1}{5}$.
\end{theorem}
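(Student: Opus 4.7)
The plan is to run the same Conrey--Ghosh-style framework that underlies \autoref{main_thm}, taking advantage of the simplifications available at full level. When $N=1$ the additive twist of $f$ by $1/2$ is essentially the only non-character twist that arises, and its completed $L$-function satisfies a clean functional equation comparing it to $\Lambda_f$. Arguing by contradiction, I suppose that $N_f^s(T) \ll T^\nu$ for some $\nu < 1/5$ and study a weighted sum over simple zeros of the shape
\begin{equation*}
    D(s) \;=\; \sum_{\rho}\frac{g(\rho,s)}{\Lambda_f'(\rho)},
\end{equation*}
where $g$ is a test kernel chosen so that $D(s)$ simultaneously encodes the additive twist of $f$ by $1/2$ and smoothly truncates the range of summation at height $T$.

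The first step is to show that $D(s)$ has a genuine singularity at an explicit point $s_0$. At $N=1$ this is elementary: the obstruction described in the introduction (complete pole cancellation in \eqref{delta_twist_simplified_eq}) is easily ruled out at $\alpha = 1/2$ by a direct local calculation at the prime $2$, which is the observation underlying the original argument of Conrey and Ghosh. The second step is to express the same quantity as a contour integral, shift past the critical line, and estimate the resulting integral using a mean-value bound for $\Lambda_f$ on vertical lines. Schematically, the bound on the ``zero side'' has shape $T^{2\nu}\, M(T)$, where $M(T)$ is the relevant second-moment average of $L(s,f)$ on a short interval; equating this with the main term coming from the singularity at $s_0$ then forces a lower bound on $\nu$.

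The improvement from $1/6$ to $1/5-\eps$ should come entirely from this mean-value step: by smoothing the kernel $g$ and inserting a sharper form of the approximate functional equation together with a fourth-moment estimate for $L(s,f)$ close to the edge of the critical strip, one saves enough to push the exponent past the Conrey--Ghosh threshold. I expect the main obstacle to be the engineering of $g$ so that it simultaneously (i) forces the singularity at $s_0$ to appear with a computable residue, (ii) decays rapidly enough off the real axis to truncate effectively at height $T$, and (iii) is compatible with the mean-value inequality on the shifted contour. Optimizing the free parameters under these three constraints is what converts the old $1/6$ into the new $1/5$.
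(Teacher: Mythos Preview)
Your outline does not match the actual mechanism, and in two places it points in the wrong direction.

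First, the ``zero side'' object is miswritten. In the Conrey--Ghosh framework the simple zeros enter through
\[
S_f(z)=-\sum_{\rho}\Lambda_f'(\rho)\,(-iz)^{-\rho-\frac{k-1}{2}},
\]
i.e.\ $\Lambda_f'(\rho)=-\Res_{s=\rho}\Delta_f(s)$ appears as a multiplicative weight, not in the denominator. Your $D(s)=\sum_\rho g(\rho,s)/\Lambda_f'(\rho)$ would require an explicit-formula-type identity that is not part of this method; nothing in the paper's setup produces such a sum, and the subsequent heuristic ``$T^{2\nu}M(T)$'' does not arise either. After taking the truncated Mellin transform, the divergence condition coming from a pole of $H_{f,\alpha}$ at abscissa $\ge\theta_f$ is
\[
\sum_{\rho=\beta+i\gamma}\bigl|\Lambda_f'(\rho)\bigr|\,e^{\pi|\gamma|/2}(1+|\gamma|)^{-\sigma-\frac{k-1}{2}}=\infty
\]
for every $\sigma<\theta_f$, and one must bound the left-hand side from above in terms of $N_f^s(T)$.

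Second, the analytic input you name is not the one that gives $1/5$. An approximate functional equation and a fourth-moment bound for $L_f$ on the critical line do not help here: the fourth moment of a degree-two $L$-function with the strength you would need (say $\int_{-T}^{T}|L_f(\tfrac12+it)|^4\,dt\ll T^{1+\eps}$) is not known, and nothing ``close to the edge of the critical strip'' is used. The improvement over Conrey--Ghosh comes from Jutila's \emph{sixth} moment bound $\int_{-T}^{T}|L_f(\tfrac12+it)|^6\,dt\ll_{f,\eps}T^{2+\eps}$. Concretely, one first converts $L_f'(\rho)$ into a short average of $|L_{\overline f}(\tfrac12+it)|$ and then applies H\"older with exponents $(6,6/5)$, obtaining
\[
\sum_{\substack{\beta_n\ge 1/2\\ |\gamma_n|\le T}}|L_f'(\rho_n)|\ \ll_{f,\eps}\ N_f^s(T)^{5/6}\,T^{1/3+\eps}.
\]
Inserting this into the divergent sum (after Stirling and dyadic decomposition) yields $\sum_{T\text{ dyadic}} N_f^s(T)^{5/6}T^{-1/6+\eps}=\infty$, which forces $N_f^s(T)=\Omega(T^{1/5-\eps})$. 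The exponent $1/5$ is exactly the solution of $\tfrac56\nu-\tfrac16=0$; there is no kernel $g$ to engineer and no free parameters to optimize.
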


\autoref{level_1_thm} comes from a simple modification of the last step of their original argument (or its reformulated version in the language of this paper, as presented in \autoref{level_one_section}). Instead of using Weyl subconvexity for $\Lambda_f$, we input Jutila's sixth moment bound \cite{Ju87}. Analogous improvements in the exponent of \autoref{main_thm} would also follow if one had a similar moment bound for $f$ of general level, which may be accessible with current tools but we do not pursue here.

It seems likely that the methods of this paper would apply to Maass forms as well, along the lines of work of Booker, Cho, and Kim \cite{BCK19}. Indeed, while we do use the Ramanujan conjecture for convenience, the argument only really requires information which is already provided by Rankin-Selberg. We restrict ourselves to holomorphic forms for simplicity.

\subsection{Sketch of the argument}\label{sketch_subsection}

Let us describe the obstructions that arise when the level is non-trivial. First we must give an overview of the general method, but we shall be somewhat imprecise and use standard notations that will be familiar to the experts without further explanation, postponing the definitions until \autoref{setup_section}. The fundamental object is the Dirichlet series
\begin{equation}\label{D_def_eq}
    D_f(s) := L_f(s) \left(\frac{L'_f}{L_f}\right)'(s) = \sum_{n=1}^\infty c_f(n) n^{-s} \quad \quad \text{for } \Re(s) > 1,
\end{equation}
which has meromorphic continuation to $\C$ with poles exactly at the simple zeros of $L_f(s)$ (the incomplete $L$-function of $f$), including the trivial ones at $s = \frac{1-k}{2} - n$, for $n \in \Z_{\geq 0}$. It is convenient to work with the completed version $\Delta_f(s) := \Gamma_\C\left( s+\frac{k-1}{2}\right) D_f(s)$, which satisfies a certain functional equation coming from that of $\Lambda_f$. 

The way we obtain information about simple zeros is using the inverse Mellin transform
\begin{equation*}
    F_{f}(z) := 2 \sum_{n=1}^\infty c_{f}(n) n^\frac{k-1}{2} e(nz) = \frac{1}{2\pi i } \int\limits_{\Re(s) = 2} \Delta_{f}(s)(-iz)^{-s - \frac{k-1}{2}} \dd s,
\end{equation*}
for $z \in \H$. Indeed, shifting the line of integration to the left of the critical strip and returning to the right via the functional equation of $\Delta_f$, we pick up poles of $\Delta_f$ and obtain a relation of the form
\begin{equation}\label{simplified_reciprocity_eq}
    F_f(z) = (*) \cdot F_{\overline{f}}\left(-\frac{1}{Nz} \right) + S_f(z) + (**)
\end{equation}
for certain terms $(*)$ and $(**)$ that we brush aside for now. Here the poles contribute
\begin{equation}\label{S_f_simplified_eq}
    S_{f}(z) := -\sum_{\rho} \Lambda'_{f}(\rho) (-iz)^{-\rho - \frac{k-1}{2}},
\end{equation}
where $\rho$ runs over the simple zeros of $\Lambda_f$. 

Understanding the size of $S_f$ gives information about the simple zeros of $\Lambda_f$. To do so we apply a Mellin transform to \eqref{simplified_reciprocity_eq} along the half-line $\Re(z) = \alpha \in \Q^\times$. This gives rise to additive twists of $\Delta_f$, and in the end one obtains a relation between the Mellin transform of $S_f$ and an expression of the form
\begin{equation}\label{delta_twist_simplified_eq}
    \Delta_f(s, \alpha) - (***) \cdot \Delta_{\overline{f}}\left(s, -\frac{1}{N\alpha} \right)
\end{equation}
for some non-vanishing factor $(***)$ which we ignore in this sketch. 

The goal now becomes to show that \eqref{delta_twist_simplified_eq} has a pole with large real part (i.e.\ at least $1/2$) for some $\alpha \in \Q^\times$, since then this pole gets transferred to the Mellin transform of $S_f$ and we get a lower bound for simple zeros. As an aside, the reason why the method produces omega results  is that we obtain only minimal information about the pole structure of the Mellin transform of $S_f$ (which makes the application of Tauberian theorems difficult), as opposed to bounds for $S_f$ itself. 

Since additive twists of $\Delta_f$ are not so well-behaved, we expand them into multiplicative twists instead to understand their poles. At least for $\alpha = \frac{a}{q}$, with $q \nmid N$ a prime, we obtain
\begin{equation}\label{additive_to_mult_simplified_eq}
    \Delta_f\left(s, \frac{a}{q}\right) = \Delta_f(s) +  b_{\chi_0, a} \cdot \Delta_f(s, \chi_0) + \sum_{\substack{\chi \Mod{q} \\ \chi \not= \chi_0}} b_{\chi, a} \cdot \Delta_{f \otimes \chi}(s)
\end{equation}
for certain coefficients $b_{\chi, a}$, where $\chi_0 \pmod{q}$ denotes the trivial character. A key point is that the term $\Delta_f(s) + b_{\chi_0, a} \cdot \Delta_f(s, \chi_0)$ has the same poles as $\Delta_f(s)$ in the interior of the critical strip.

Here it becomes clear why the case $N=1$ is special: one may simply plug $\alpha = \frac{1}{2}$ into \eqref{delta_twist_simplified_eq}. Applying \eqref{additive_to_mult_simplified_eq} and using the fact that there are no non-trivial characters modulo $2$, one checks that \eqref{delta_twist_simplified_eq} has the same poles as $\Delta_f(s)$ inside the critical strip (hence by the aforementioned result of Booker \cite{Bo16} it has at least one pole with real part greater or equal to $1/2$, and one recovers the bound of Conrey and Ghosh). 

For non-trivial level, as was pointed out in \cite{BMN19}, one encounters obstacles that are reminiscent of the difficulties in extending Hecke's converse theorem to arbitrary level. However, Booker, Milinovich, and Ng are still able to obtain a result for $N$ odd, using not only the special nature of the choice $\alpha = \frac{1}{2}$, but also adding an extra additive twist in the outset of the problem and leveraging various choices of $\alpha$ against each other. 

The improvements of the present work are twofold, and in essentially disjoint parts of the argument sketched above. To obtain a result for $f$ of any level (without parity restrictions), instead of using twists by $1/2$ we provide in \autoref{pole_existence_section} a new unified way of verifying that \eqref{delta_twist_simplified_eq} has poles with real part greater or equal to $1/2$ for some choice of $\alpha \in \Q^\times$. The main point is that it is possible to construct a linear combination of certain terms of the form \eqref{delta_twist_simplified_eq} that equals
\begin{equation}\label{linear_comb_simplified_eq}
    \Delta_f\left(s, \frac{1}{p} \right) - \Delta_f\left(s, -\frac{\overline{N}}{p} \right)
\end{equation}
for a certain prime $p$. Then one may use ideas of Booker \cite{Bo16} to show that \eqref{linear_comb_simplified_eq} has a pole inside the critical strip, ultimately coming from the simple zeros of local factors of $\Lambda_f$. We refer to \autoref{pole_existence_overview_subsection} for a more detailed discussion of this part of the argument, and to \cite{Bo03, BK14} and the references therein for striking uses of similar ideas.

To upgrade such a pole inside the critical strip to one with real part greater or equal to $1/2$, we use the important feature that \eqref{linear_comb_simplified_eq} was constructed specifically to satisfy a certain functional equation relating $s$ to $1-s$ (reminiscent of Voronoi summation). Thus the poles of \eqref{linear_comb_simplified_eq} inside the critical strip are invariant under reflection through the central point, which gives the desired pole with real part at least $1/2$ and makes the method applicable to all $N$.

We now turn to the second improvement, which is what allows us to obtain a power bound. Observe from \eqref{S_f_simplified_eq} that the contribution to $S_f$ of each simple zero $\rho$ is weighted by a factor that becomes larger with $\Re(\rho)$, so in its current form the result is poorer if $\Lambda_f(s)$ has simple zeros close to $\Re(s) = 1$. If all the simple zeros $\rho$ satisfy $\Re(\rho) \leq \frac{7}{9}$, then we simply use the argument above and obtain a power bound for the number of simple zeros of $\Lambda_f$. Otherwise, if $\rho$ is a simple zero with $\Re(\rho) > \frac{7}{9}$, we will show that there exists $\alpha \in \Q^\times$ such that \eqref{delta_twist_simplified_eq} also has a pole at $\rho$ (in \cite{BMN19} the key to control this scenario is using the zero-free region of $\Lambda_f$ to limit $\Re(\rho)$, which is why the resulting bound is of logarithmic quality). The pole of \eqref{delta_twist_simplified_eq} at $\rho$ ultimately gives an even better power bound, so either way we obtain the desired result.

Let $\rho$ be a simple zero of $\Lambda_f$ (therefore a pole of $\Delta_f$) with $\Re(\rho) > \frac{7}{9}$. To rule out pole cancellations in \eqref{delta_twist_simplified_eq} for every $\alpha \in \Q^\times$, we introduce a new number-theoretic input into the argument, namely a zero-density estimate for twists of $f$. This is done by observing that for any prime $p \equiv 1 \pmod{N}$ there is a linear combination of terms of the form \eqref{delta_twist_simplified_eq} that gives
\begin{equation}\label{power_bound_linear_comb_eq}
    p^{1-2s} \Delta_f(s) - \Delta_f\left(s, \frac{1}{p}\right).
\end{equation}
One can use \eqref{additive_to_mult_simplified_eq} to understand \eqref{power_bound_linear_comb_eq}, concluding that it is equal (modulo a term that is holomorphic at $s=\rho$) to
\begin{equation}\label{mult_expansion_simplified_eq}
    b_{f, \chi_0}(s) \cdot \Delta_f(s) + \sum_{\substack{\chi \Mod{p} \\ \chi \not= \chi_0}} b_{\chi} \cdot \Delta_{f \otimes \chi}(s)
\end{equation}
for some factor $b_{f, \chi_0}(s)$ which is non-vanishing inside the critical strip (hence at $s=\rho$).

If \eqref{power_bound_linear_comb_eq} does not have a pole at $s=\rho$, then the pole of $\Delta_f(s)$ there must be cancelled in \eqref{mult_expansion_simplified_eq}, so $\Delta_{f\otimes\chi}(s)$ must have a pole at $s=\rho$ for at least one non-trivial character $\chi \pmod{p}$. This implies that $\Lambda_{f\otimes\chi}(\rho) = 0$ for at least one non-trivial character $\chi$ modulo every prime $p \equiv 1\pmod{N}$. However, since $\Re(\rho) > \frac{7}{9}$, we can rule this out via zero-density estimates for twists of $f$. Therefore we show that \eqref{delta_twist_simplified_eq} has a pole at $s=\rho$ for some $\alpha \in \Q^\times$, which implies a power bound for the number of simple zeros of $\Lambda_f$.

\subsection*{Acknowledgments}

I would like to thank my PhD advisor, Maksym Radziwi{\l}{\l}, for introducing me to this problem, for many useful discussions on the topics of this paper, and for valuable advice and encouragement. Thanks also to Andrew Booker for his helpful comments and correspondence.


\section{The setup}\label{setup_section}

\subsection{Definitions and background}

Let $f \in S_k(\Gamma_0(N), \xi)$ be a primitive form (i.e.\ an arithmetically normalized holomorphic Hecke cusp newform) of arbitrary weight $k$, level $N$, and nebentypus character $\xi \pmod{N}$. Writing the Fourier expansion
\begin{equation*}
    f(z) = \sum_{n=1}^\infty \lambda_f(n) n^\frac{k-1}{2} e(nz)
\end{equation*}
for $z \in \H$, where $\lambda_f(1)=1$, we have Deligne's bound $|\lambda_f(n)| \leq d(n)$. Associate to $f$ the usual completed $L$-function $\Lambda_f(s) := \Gamma_\C\left(s + \frac{k-1}{2}\right) L_f(s)$, which is entire, where $\Gamma_\C\left(s\right) := 2 (2\pi)^{-s}\Gamma(s)$ and
\begin{equation*}
     L_f(s) := \sum_{n=1}^\infty \lambda_f(n) n^{-s} = \prod_{p \text{ prime}} \left(1 - \lambda_f(p) p^{-s} + \xi(p)p^{-2s} \right)^{-1} \quad \quad \text{for } \Re(s) > 1.
\end{equation*}
Then we have the functional equation $\Lambda_f(s) = \rn_f N^{\frac{1}{2}-s} \Lambda_{\overline{f}}(1-s)$, where $\overline{f} \in S_k(\Gamma_0(N), \overline{\xi})$ is the dual of $f$, with Fourier coefficients $\lambda_{\overline{f}}(n) = \overline{\lambda_f(n)}$, and $\rn_f \in \C$ is the root number of $f$, with $|\rn_f| = 1$.

Let $D_f$ be as in \eqref{D_def_eq}. For $\alpha \in \Q^\times$, $\chi$ a Dirichlet character, and $\Re(s) > 1$, we define the additive twists
\begin{equation*}
    L_f(s, \alpha) := \sum_{n=1}^\infty \lambda_f(n) e(n\alpha) n^{-s} \quad \quad \text{and} \quad \quad D_f(s, \alpha) := \sum_{n=1}^\infty c_f(n) e(n\alpha) n^{-s},
\end{equation*}
and the multiplicative twists
\begin{equation*}
    L_f(s, \chi) := \sum_{n=1}^\infty \lambda_f(n) \chi(n) n^{-s} \quad \quad \text{and} \quad \quad D_f(s, \chi) := \sum_{n=1}^\infty c_f(n) \chi(n) n^{-s}.
\end{equation*}

Denote
\begin{equation*}
    Q(N) := \{1\} \cup \{p \text{ prime}: p \nmid N \}.
\end{equation*}
For each Dirichlet character $\chi \pmod{q}$, there is a unique primitive form $f \otimes \chi$ such that $\lambda_{f \otimes \chi}(n) = \lambda_f(n) \chi(n)$ for every $n$ with $(n, q) = 1$, by \cite[Theorem 3.2]{AL78}. If $q \in Q(N)$ and $\chi$ is non-trivial, then in fact $L_{f}(s, \chi) = L_{f\otimes \chi}(s)$ and therefore this multiplicative twist has analytic continuation to $\C$. This shows that $D_f(s, \chi) = L_f(s, \chi) \left(\frac{L'_f(s, \chi)}{L_f(s, \chi)} \right)' = D_{f\otimes \chi}(s)$ has meromorphic continuation to $\C$. 

Similar results hold for the additive twists as well. Indeed, if $q \in Q(N)$, then we can expand our additive characters into multiplicative ones using
\begin{equation}\label{additive_char_expansion_eq}
    e\left(\frac{n}{q}\right) = \frac{q-1}{\phi(q)} + \frac{q}{\phi(q)}\tau(\chi_0)\chi_0(n) + \frac{1}{\phi(q)}\sum_{\substack{\chi \Mod{q} \\ \chi \not= \chi_0}} \tau(\overline{\chi}) \chi(n),
\end{equation}
where $\chi_0 \pmod{q}$ is the trivial character, the sum ranges over every non-trivial $\chi \pmod{q}$, and $\tau$ denotes the Gauss sum (observe that $\tau(\chi_0) = 1$ if $q=1$ and $\tau(\chi_0) = -1$ otherwise). For any $a \in \Z$, this implies that $L_f\left(s, \frac{a}{q}\right)$ is entire, and $D_f\left(s, \frac{a}{q}\right)$ extends meromorphically to $\C$. 

To be more precise, for $q \in Q(N)$, consider the rational functions
\begin{equation*}
    P_{f, q}(x) := 
    \begin{cases}
        1 & \text{if } q=1, \\
        1 -\lambda_f(q)x + \xi(q) x^2 & \text{otherwise},
    \end{cases}
\end{equation*}
and
\begin{equation*}
    R_{f, q}(x) :=
    \begin{cases}
        0 & \text{if } q=1, \\
        \frac{q \log^2{q}}{\phi(q)} \frac{x (\lambda_f(q) - 4 \xi(q)x + \lambda_f(q) \xi(q) x^2)}{P_{f, q}(x)} & \text{otherwise}.
    \end{cases}
\end{equation*}
Then \eqref{additive_char_expansion_eq} gives
\begin{equation}\label{D_expansion_eq}
    D_f\left(s, \frac{a}{q}\right) = \frac{q-1}{\phi(q)}D_f(s) + \frac{q}{\phi(q)}\tau(\chi_0)\chi_0(a) D_f(s, \chi_0) + \frac{1}{\phi(q)}\sum_{\substack{\chi \Mod{q} \\ \chi \not= \chi_0}} \tau(\overline{\chi}) \chi(a) D_f(s, \chi).
\end{equation}
We have seen before that if $\chi \pmod{q}$ is non-trivial then $D_f(s, \chi) = D_{f\otimes \chi}(s)$ extends meromorphically to $\C$, but also from $D_f(s, \chi_0) = L_f(s, \chi_0) \left(\frac{L'_f(s, \chi_0)}{L_f(s, \chi_0)}\right)'$ and $L_f(s, \chi_0) = P_{f, q}(q^{-s}) L_f(s)$ (coming from the Euler product of $L_f$) we get
\begin{equation}\label{D_trivial_char_eq}
    D_f(s, \chi_0) = P_{f, q}(q^{-s}) D_f(s) - \frac{\phi(q)}{q} R_{f, q}(q^{-s}) L_f(s),
\end{equation}
and this provides the meromorphic continuation of $D_f\left(s, \frac{a}{q}\right)$ to $\C$. The analytic continuation of $L_f\left(s, \frac{a}{q}\right)$ to $\C$ follows in the same way.

For $(a, q) = 1$, it will be convenient to work with
\begin{equation*}
    D_{f, a, q}(s) := D_f\left(s, \frac{a}{q}\right) - R_{f, q}(q^{-s}) L_f(s) = \sum_{n=1}^\infty c_{f, a, q}(n) n^{-s} \quad \quad \text{for } \Re(s) > 1,
\end{equation*}
where the Dirichlet series expansion follows from \eqref{D_expansion_eq}, \eqref{D_trivial_char_eq}, and \eqref{D_def_eq}. Clearly $D_{f, a, q}(s)$ extends meromorphically to $\C$. We then define additive and multiplicative twists of $D_{f, a, q}(s)$. Namely, if $\chi$ is a Dirichlet character and $\alpha \in \Q^\times$, then for $\Re(s) > 1$ we let
\begin{equation*}
    D_{f, a, q}(s, \chi) := \sum_{n=1}^\infty c_{f, a, q}(n) \chi(n) n^{-s} \quad \quad \text{and} \quad \quad D_{f, a, q}(s, \alpha) := \sum_{n=1}^\infty c_{f, a, q}(n) e(n\alpha) n^{-s}.
\end{equation*}

Finally, associate to each of $L_f, D_f, D_{f, a, q}$ and their (additive or multiplicative) twists the completed versions $\Lambda_f, \Delta_f, \Delta_{f, a, q}$, respectively, obtained by multiplying by $\Gamma_\C\left(s + \frac{k-1}{2}\right)$. 

\subsection{Functional equations}

If $q \in Q(N)$ and $\chi \pmod{q}$ is non-trivial, the functional equation for $f \otimes \chi$ gives
\begin{equation*}
    \Lambda_f(s, \chi) = \rn_f \xi(q) \chi(N) \frac{\tau(\chi)^2}{q} (Nq^2)^{\frac{1}{2}-s} \Lambda_{\overline{f}}(1-s, \overline{\chi}),
\end{equation*}
and as a consequence we obtain the corresponding functional equation for $\Delta_f(s, \chi) = \Delta_{f\otimes\chi}(s)$, given by
\begin{equation*}
    \Delta_f(s, \chi) - \rn_f \xi(q) \chi(N) \frac{\tau(\chi)^2}{q} (Nq^2)^{\frac{1}{2}-s} \Delta_{\overline{f}}(1-s, \overline{\chi}) = \Lambda_f(s, \chi) \left(\psi'\left(\frac{k+1}{2}-s\right) - \psi'\left(s + \frac{k-1}{2}\right)\right),
\end{equation*}
where $\psi(s) := \frac{\Gamma'}{\Gamma}(s)$. Combining that with the relation
\begin{equation}\label{Delta_additive_twist_decomposition_eq}
    \Delta_{f, a, q}(s) = \left(\frac{q-1}{\phi(q)} + \frac{q}{\phi(q)}\tau(\chi_0) P_{f, q}(q^{-s}) \right) \Delta_f(s) + \frac{1}{\phi(q)}\sum_{\substack{\chi \Mod{q} \\ \chi \not= \chi_0}} \tau(\overline{\chi}) \chi(a) \Delta_f(s, \chi)
\end{equation}
for $q \in Q(N)$ and $(a, q) = 1$, which follows from \eqref{D_expansion_eq} and \eqref{D_trivial_char_eq}, we obtain a functional equation for additive twists of $\Delta_f$.

\begin{proposition}[Functional equation for $\Delta_{f, a, q}$ {\cite[Proposition 2.1]{BMN19}}]\label{delta_twist_funct_eq_prop}
    Let $f \in S_k(\Gamma_0(N), \xi)$ be a primitive form, $q \in Q(N)$, and $a \in \Z$ coprime to $q$. Then
    \begin{equation*}
        \Delta_{f, a, q}(s) - \rn_f \xi(q) (N q^2)^{\frac{1}{2}-s} \Delta_{\overline{f}, -\overline{Na}, q}(1-s) = \Lambda_f\left(s, \frac{a}{q} \right) \left(\psi'\left(\frac{k+1}{2}-s\right) - \psi'\left(s + \frac{k-1}{2}\right)\right).
    \end{equation*}
\end{proposition}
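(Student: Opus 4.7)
The plan is to reduce the identity to the already-stated functional equation for $\Delta_f(s,\chi)$ (non-trivial $\chi$) together with the analogous identity for $\Delta_f(s)$ itself. The latter follows from $\Lambda_f(s) = \rn_f N^{\frac{1}{2}-s}\Lambda_{\overline{f}}(1-s)$ by taking the logarithmic derivative and differentiating once more, which gives $(\Lambda'_f/\Lambda_f)'(s) = (\Lambda'_{\overline{f}}/\Lambda_{\overline{f}})'(1-s)$; combining this with $(\Lambda'_f/\Lambda_f)'(s) = (L'_f/L_f)'(s) + \psi'(s+\tfrac{k-1}{2})$ and multiplying by $\Lambda_f(s)$ yields
\begin{equation*}
\Delta_f(s) - \rn_f N^{\frac{1}{2}-s} \Delta_{\overline{f}}(1-s) = \Lambda_f(s) \bigl(\psi'(\tfrac{k+1}{2}-s) - \psi'(s+\tfrac{k-1}{2})\bigr).
\end{equation*}

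I would then substitute both functional equations into \eqref{Delta_additive_twist_decomposition_eq}. The right side naturally splits into a part carrying $\psi'(\tfrac{k+1}{2}-s) - \psi'(s+\tfrac{k-1}{2})$ and a part built out of $\Delta_{\overline{f}}(1-s,\overline{\chi})$ for $\chi$ ranging over characters mod $q$. Writing $A(s)$ for the coefficient of $\Delta_f(s)$ appearing in \eqref{Delta_additive_twist_decomposition_eq}, the coefficient of the $\psi'$-bracket is $A(s)\Lambda_f(s) + \phi(q)^{-1}\sum_{\chi\neq\chi_0}\tau(\overline{\chi})\chi(a)\Lambda_f(s,\chi)$, and one recognizes this as $\Lambda_f(s,a/q)$ by applying \eqref{additive_char_expansion_eq} to $L_f(s,a/q)$ and using $L_f(s,\chi_0) = P_{f,q}(q^{-s})L_f(s)$ (and pulling out the gamma factor).

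For the $\Delta_{\overline{f}}$-piece I would reindex $\chi \to \overline{\chi}$ in the expansion of $\Delta_{\overline{f},-\overline{Na},q}(1-s)$ via \eqref{Delta_additive_twist_decomposition_eq} applied to $\overline{f}$. The non-trivial-character coefficients then match after invoking the Gauss-sum identity $\tau(\chi)\tau(\overline{\chi}) = \chi(-1)q$ for the primitive character $\chi$ modulo the prime $q$, together with the elementary computation $\overline{\chi}(-\overline{Na}) = \chi(-1)\chi(N)\chi(a)$. Matching the trivial-character piece reduces to the algebraic identity $A(s) = \xi(q)q^{1-2s}\overline{A}(1-s)$, which follows by expanding $P_{f,q}(x) = 1 - \lambda_f(q)x + \xi(q)x^2$ and using the Deligne-Hecke relation $\xi(q)\overline{\lambda_f(q)} = \lambda_f(q)$, itself a consequence of writing $\lambda_f(q) = \alpha_q + \beta_q$ with $\alpha_q\beta_q = \xi(q)$ and $|\alpha_q| = |\beta_q| = 1$ (valid since $q \nmid N$).

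The main obstacle is purely bookkeeping: tracking all the Gauss-sum factors, Euler-factor corrections, and conjugations $\chi\to\overline{\chi}$ so that both sides agree term by term. No input beyond the two functional equations, the Gauss-sum identity, and the Deligne relation is required, and the degenerate case $q=1$ (empty sum, $P_{f,1} = 1$, $A(s) = 1$) collapses to the plain functional equation for $\Delta_f$ derived in the first paragraph.
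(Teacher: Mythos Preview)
Your proposal is correct and follows exactly the approach the paper indicates: combine the functional equation for $\Delta_f(s,\chi)$ (and the analogous one for $\Delta_f(s)$, which you derive) with the decomposition \eqref{Delta_additive_twist_decomposition_eq}, then match terms using Gauss-sum identities and the relation $\xi(q)\overline{\lambda_f(q)}=\lambda_f(q)$. One very minor remark: the last relation already follows from Hecke theory (self-adjointness of $T_q$ up to $\overline{\xi(q)}$) and does not actually require Deligne's bound, though your justification via $|\alpha_q|=|\beta_q|=1$ is of course also valid.
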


\subsection{The detection mechanism for simple zeros}

We give a brief account of the techniques of \cite{BMN19}, since they will be relevant in what follows, but refer to that paper for details. The main idea originates in \cite{CG88}, and is developed in greater generality in \cite{Bo16}. The starting point is to study the poles of $\Delta_f$ by relating them to the inverse Mellin transform of $\Delta_f$, via a contour integral. We develop the notation in the more general case of $\Delta_{f, a, q}$ for future reference. For $z \in \H$, let
\begin{equation*}
    \begin{split}
        &F_{f, a, q}(z) := 2 \sum_{n=1}^\infty c_{f, a, q}(n) n^\frac{k-1}{2} e(nz), \quad \quad S_{f, a, q}(z) := \sum_{\Re(\rho) \in (0, 1)} \Res_{s=\rho} \Delta_{f, a, q}(s) (-iz)^{-s - \frac{k-1}{2}}, \\
        &A_{f, a, q}(z) := \frac{1}{2\pi i} \int\limits_{\Re(s) = \frac{k}{2}} \Lambda_f \left(s, \frac{a}{q}\right) \left(\psi'\left( s + \frac{k-1}{2}\right) + \psi'\left(s - \frac{k-1}{2}\right)\right) (-iz)^{-s -\frac{k-1}{2}} \dd s,
    \end{split}
\end{equation*}
and
\begin{equation*}
    B_{f, a, q}(z) := \frac{1}{2\pi i} \int\limits_{\Re(s) = \frac{k}{2}} \Lambda_f \left(s, \frac{a}{q}\right) \frac{\pi^2}{\sin^2\left(\pi\left(s + \frac{k-1}{2}\right)\right)} (-iz)^{-s -\frac{k-1}{2}} \dd s,
\end{equation*}
where $(-iz)^{-s-\frac{k-1}{2}}$ is defined in terms of the principal branch of $\log(-iz)$.

Taking the inverse Mellin transform of $\Delta_{f, a, q}$ (evaluated at $-iz$), shifting the line of integration to the left of the critical strip --- where we pick up the factor $S_{f, a, q}$ corresponding to the poles --- and using the functional equation (\autoref{delta_twist_funct_eq_prop}) to return to the right of the critical strip, we obtain (see \cite[Lemma 2.3]{BMN19} for details) the relation
\begin{equation*}
    S_{f, a, q}(z) = F_{f, a, q}(z) - \frac{\rn_f \xi(q)}{(-i \sqrt{N}qz)^k} F_{\overline{f}, -\overline{Na}, q}\left(-\frac{1}{Nq^2z}\right) + A_{f, a, q}(z) - B_{f,a , q}(z).
\end{equation*}

The next step is to take the Mellin transform for $z \in \H$ along a vertical line in the relation above. Such a procedure along the line $\Re(z) = 0$ would essentially bring us back to the previous step, but we instead integrate along $\Re(z) = \alpha$ for some $\alpha \in \Q^\times$ and obtain additive twists. The final result is the following.

\begin{proposition}[Detecting poles of $\Delta_{f, a, q}$ via further additive twists {\cite[Proposition 2.2]{BMN19}}]\label{pole_detection_prop}
    Define 
    \begin{equation*}
        H_{f, a, q, \alpha}(s) := \Delta_{f, a, q}(s, \alpha) - \rn_f \xi(q) (i \sgn(\alpha))^k (N q^2 \alpha^2)^{s - \frac{1}{2}} \Delta_{\overline{f}, -\overline{Na}, q}\left(s, - \frac{1}{Nq^2\alpha}\right)
    \end{equation*}
    and
    \begin{equation*}
        I_{f, a, q, \alpha}(s) := \int_0^{\frac{|\alpha|}{4}} S_{f, a, q}(\alpha + iy) y^{s + \frac{k-1}{2}} \frac{dy}{y}.
    \end{equation*}
    Then $I_{f, a, q, \alpha}(s) - H_{f, a, q, \alpha}(s)$ has analytic continuation to $\Re(s) > 0$. Therefore, if
    \begin{equation}\label{S_abs_val_condition_eq}
        \int_0^{\frac{|\alpha|}{4}} |S_{f, a, q}(\alpha + iy)| y^{\sigma + \frac{k-1}{2}} \frac{dy}{y} < \infty
    \end{equation}
    for some $\sigma \geq 0$, then $H_{f, a, q, \alpha}(s)$ is holomorphic for $\Re(s) > \sigma$.
\end{proposition}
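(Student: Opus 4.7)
The plan is to take the Mellin transform along the half-line $z = \alpha + iy$, $0 < y \le |\alpha|/4$, of the identity
\[
    S_{f,a,q}(z) = F_{f,a,q}(z) - \frac{\rn_f \xi(q)}{(-i\sqrt{N}qz)^k} F_{\overline{f},-\overline{Na},q}\left(-\frac{1}{Nq^2 z}\right) + A_{f,a,q}(z) - B_{f,a,q}(z)
\]
recalled right before the proposition (see \cite[Lemma 2.3]{BMN19}), pairing against $y^{s+(k-1)/2} \dd y / y$. By construction the left-hand side gives $I_{f,a,q,\alpha}(s)$, so the task reduces to showing that the four right-hand contributions assemble to $H_{f,a,q,\alpha}(s)$ plus an error that is holomorphic on $\Re(s)>0$.

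For the $F_{f,a,q}(\alpha+iy)$ term, I would expand the Fourier series (which converges absolutely for $y>0$), interchange sum and integral, and extend the integration from $(0,|\alpha|/4)$ to $(0,\infty)$ at the cost of an entire remainder, using the exponential decay of $F_{f,a,q}(\alpha+iy)$ as $y\to\infty$. The Gamma identity $\int_0^\infty e^{-2\pi n y} y^{s+(k-1)/2} \dd y / y = (2\pi n)^{-s-(k-1)/2}\Gamma(s+(k-1)/2)$ then packages this into $\Delta_{f,a,q}(s,\alpha)$. For the second term, I would perform the M\"obius substitution $z\mapsto -1/(Nq^2z)$, which carries the arc $\{\alpha+iy:0<y<|\alpha|/4\}$ onto a curve emanating from $-1/(Nq^2\alpha)$ into the upper half-plane; bookkeeping the Jacobian and the prefactor $(-i\sqrt{N}qz)^{-k}$ on the principal branch of $\log$ produces the weight $\rn_f\xi(q)(i\sgn\alpha)^k(Nq^2\alpha^2)^{s-1/2}$, and the same Gamma computation, after extending to $(0,\infty)$ with a tail that is holomorphic on $\Re(s)>0$ by the $z^{-k}$ decay, yields $\Delta_{\overline{f},-\overline{Na},q}(s,-1/(Nq^2\alpha))$.

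For the $A_{f,a,q} - B_{f,a,q}$ contribution, I would swap the Mellin integral in $y$ with the defining contour integrals at $\Re(s')=k/2$; absolute convergence is safeguarded by the rapid decay of $\Lambda_f(s',a/q)$ on vertical lines, and the inner $y$-integral produces elementary meromorphic functions of $s$ whose poles are confined to $\Re(s)\le 0$, because the $\psi'$ and $\pi^2/\sin^2$ weights in the definitions of $A$ and $B$ are arranged precisely to cancel the gamma poles that would otherwise appear on the right half-plane. Assembling the four pieces gives $I_{f,a,q,\alpha}(s)-H_{f,a,q,\alpha}(s)$ holomorphic on $\Re(s)>0$, proving the first assertion; the second claim is then immediate, since \eqref{S_abs_val_condition_eq} forces $I_{f,a,q,\alpha}(s)$ to be holomorphic on $\Re(s)>\sigma$. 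The main subtlety is the bookkeeping of signs and branches that produces the precise factor $(i\sgn\alpha)^k$, together with verifying the cancellation of spurious poles in the $A$ and $B$ contributions on $\Re(s)>0$; this is worked out in \cite[Proposition 2.2]{BMN19}, whose argument I would follow.
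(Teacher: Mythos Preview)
Your outline is correct and mirrors the paper's treatment, which records the four ingredients as Lemmas~\ref{mellin_computation_lemma} and~\ref{aux_comp_continuation_lemma} and defers the full argument to \cite[Proposition 2.2]{BMN19}. One caution on the $F_{\overline{f}}$ term: the M\"obius image of the segment $\{\alpha+iy:0<y<|\alpha|/4\}$ is a semicircular arc with an endpoint at $-1/(Nq^2\alpha)$, not a vertical ray, so ``the same Gamma computation after extending to $(0,\infty)$'' does not apply directly; the actual mechanism (see the proof sketch of Lemma~\ref{aux_comp_continuation_lemma}) is a Taylor expansion of the phases about $y=0$, whose leading term yields $\Delta_{\overline{f},-\overline{Na},q}(s,-1/(Nq^2\alpha))$ and whose remainder is holomorphic on $\Re(s)>0$. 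Since you explicitly defer to \cite{BMN19} for this step, the proposal stands; I flag it only so you know where the genuine work lies.
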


We will use only the special case $a=q=1$ (i.e.\ detecting poles of $\Delta_f$, or equivalently simple zeros of $L_f$) of \autoref{pole_detection_prop}, but the method of proof used for the general case $\Delta_{f, a, q}$ will be the key for showing that $H_{f, 1, 1, \alpha}$ has a pole in the critical strip, for some $\alpha \in \Q^\times$. For convenience, from now on we denote $H_{f, \alpha} := H_{f, 1, 1, \alpha}$.


\section{Existence of poles of \texorpdfstring{$H_{f, \alpha}$}{H {f, alpha}}}\label{pole_existence_section}

\subsection{Outline of the method}\label{pole_existence_overview_subsection}

To establish an abundance of simple zeros of $L_f$ (i.e.\ poles of $\Delta_f$), we will use the poles of $H_{f, \alpha}$ in the critical strip, since through \eqref{S_abs_val_condition_eq} their existence would imply that $S_{f, 1, 1}$ cannot be always small. However, showing that even a single such pole of $H_{f, \alpha}$ exists turns out to be difficult, since one needs to rule out a cancellation of poles between the two terms of $H_{f, \alpha}$. The purpose of this section is to establish such a result.

In \cite{BMN19} the authors circumvent this issue in the case $2 \nmid N$ by exploiting the relations between the $H_{f, a, q, \alpha}$ for various choices of parameters $(a, q, \alpha)$. The limitation on the level $N$ comes from the key role played by twists by $1/2$ (which also play an important role in \cite{CG88}), since the poles of $\Delta_{f, 1, 2}$ are easily understood in terms of those of $\Delta_f$, due to \eqref{Delta_additive_twist_decomposition_eq} and the fact that there are no non-trivial characters modulo $2$. The issue is that this line of argument requires the case $q=2$ of \autoref{pole_detection_prop}, which is not available if $2 \mid  N$ since the functional equation in \autoref{delta_twist_funct_eq_prop} no longer holds, as the local factor for a prime dividing the level has different, more problematic properties. 

We will follow a different approach based on the methods of \cite{Bo16}, where a significant difficulty is showing that $\Delta_f$ has even a single pole in the critical strip, and this is reminiscent of our situation for $H_{f, \alpha}$. The argument in the reference proceeds by contradiction, using a refined version of \autoref{pole_detection_prop} with lower order terms to obtain a relation between $\Delta_f(s, \alpha)$ and $\Delta_f(s+1, \alpha_i)$ for certain $\alpha_i$, in the absence of poles. This would give a holomorphic continuation of $\Delta_f(s, \alpha)$ to the line $\Re(s) = 0$, where for certain $\alpha$ it plainly has poles coming from simple zeros of local factors of $L_f$ at primes dividing the denominator of $\alpha$. We apply this argument for a certain difference of $L$-functions related to $H_{f, \alpha}$, instead of for $\Delta_f$, and our issue of ruling out cancellations of poles in $H_{f, \alpha}$ at unknown locations inside the critical strip reduces to the simpler task of ruling out such cancellations at the simple zeros of certain local factors, where this can be explicitly done.

\subsection{Implementation}

Observe that
\begin{equation*}
    H_{f, 1}(s) = \Delta_f(s) - \rn_f i^k N^{s - \frac{1}{2}} \Delta_{\overline{f}}\left(s, -\frac{1}{N}\right),
\end{equation*}
and if $p$ is a prime satisfying $p \equiv 1 \pmod{N}$ then $\Delta_{\overline{f}}\left(s, -\frac{p}{N} \right) = \Delta_{\overline{f}}\left(s, -\frac{1}{N} \right)$, so
\begin{equation*}
    H_{f, \frac{1}{p}}(s) = \Delta_f \left(s, \frac{1}{p}\right) - \rn_f i^k \left(\frac{N}{p^2}\right)^{s- \frac{1}{2}} \Delta_{\overline{f}}\left(s, -\frac{1}{N}\right).
\end{equation*}
Therefore, 
\begin{equation}\label{first_H_diff_eq}
    \begin{split}
        p^{1-2s} H_{f, 1}(s) - H_{f, \frac{1}{p}}(s) &= p^{1-2s}\Delta_f(s) - \Delta_f\left(s, \frac{1}{p}\right) \\
        &= p^{1-2s}\Delta_f(s) - \Delta_{f, 1, p}(s) + R_{f, p}(p^{-s}) \Lambda_f(s).
    \end{split}
\end{equation}

Similarly, if we let $d := \frac{p-1}{N} \in \Z_{>0}$ then
\begin{equation*}
    H_{f, d}(s) = \Delta_f(s) - \rn_f i^k (Nd^2)^{s-\frac{1}{2}} \Delta_{\overline{f}}\left(s, -\frac{1}{Nd}\right),
\end{equation*}
and $\Delta_{\overline{f}}\left(s, -\frac{p}{Nd}\right) = \Delta_{\overline{f}}\left(s, -\frac{1}{Nd}\right)$, so
\begin{equation*}
    H_{f, \frac{d}{p}}(s) = \Delta_f\left(s, \frac{d}{p}\right) - \rn_f i^k \left(\frac{N d^2}{p^2}\right)^{s-\frac{1}{2}} \Delta_{\overline{f}}\left(s, -\frac{1}{Nd}\right).
\end{equation*}
Therefore, since $d \equiv -\overline{N} \pmod{p}$,  
\begin{equation}\label{second_H_diff_eq}
    \begin{split}
        p^{1-2s} H_{f, d}(s) - H_{f, \frac{d}{p}}(s) &= p^{1-2s}\Delta_f(s) - \Delta_f\left(s, \frac{d}{p}\right) \\
        &= p^{1-2s}\Delta_f(s) - \Delta_{f, -\overline{N}, p}(s) + R_{f, p}(p^{-s}) \Lambda_f(s).
    \end{split}
\end{equation}

Subtracting \eqref{first_H_diff_eq} from \eqref{second_H_diff_eq}, we conclude that
\begin{equation}\label{key_H_diff_eq}
    p^{1-2s} H_{f, d}(s) - H_{f, \frac{d}{p}}(s) - p^{1-2s} H_{f, 1}(s) + H_{f, \frac{1}{p}}(s)= \Delta_{f, 1, p}(s) - \Delta_{f, -\overline{N}, p}(s).
\end{equation}
We will be able to show the existence of useful poles for at least one of $H_{f, 1}(s), H_{f, \frac{1}{p}}(s), H_{f, d}(s)$, or $H_{f, \frac{d}{p}}(s)$ using the key proposition below.

\begin{proposition}[Ruling out complete cancellation of poles]\label{pole_existence_prop}
    For any prime $p \not= N+1$ such that $p \equiv 1 \pmod{N}$, the meromorphic function
    \begin{equation*}
        G_{f, p}(s) := \Delta_{f, 1, p}(s) - \Delta_{f, -\overline{N}, p}(s)
    \end{equation*}
    has at least one pole in $\Re(s) \in (0, 1)$.
\end{proposition}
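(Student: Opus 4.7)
The plan is to argue by contradiction, following the strategy of Booker \cite{Bo16}: suppose $G_{f,p}$ has no pole in $(0,1)$, and derive a contradiction from simple zeros of local factors at an auxiliary prime $\ell$. Applying \eqref{D_expansion_eq} and \eqref{D_trivial_char_eq} with $q = p$, the $R_{f,p}(p^{-s}) L_f(s)$ contributions cancel in the difference $\Delta_{f,1,p}(s) - \Delta_{f,-\overline{N},p}(s)$, yielding
\begin{equation*}
G_{f,p}(s) = \frac{1}{p-1} \sum_{\substack{\chi \Mod{p} \\ \chi \neq \chi_0}} \tau(\overline{\chi})\bigl(1 - \chi(-\overline{N})\bigr)\, \Delta_{f\otimes\chi}(s).
\end{equation*}
The condition $p \neq N+1$ ensures $-\overline{N} \not\equiv 1 \pmod{p}$, so at least one coefficient is non-zero and $G_{f,p} \not\equiv 0$. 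Each $\Delta_{f\otimes\chi}$ is holomorphic outside $(0,1)$, so $G_{f,p}$ is holomorphic on the two boundary lines $\Re(s) \in \{0,1\}$, and for $\Re(s) > 1$ by Dirichlet series convergence.

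Next, applying \autoref{delta_twist_funct_eq_prop} to $(a,q) = (1,p)$ and $(a,q) = (-\overline{N},p)$ (whose dual parameters $-\overline{Na}$ are $-\overline{N}$ and $1$ respectively) and subtracting gives the functional equation
\begin{equation*}
G_{f,p}(s) + \rn_f \xi(p) (Np^2)^{1/2-s}\, G_{\overline{f},p}(1-s) = \bigl(\Lambda_f(s,\tfrac{1}{p}) - \Lambda_f(s,-\tfrac{\overline{N}}{p})\bigr)\bigl(\psi'(\tfrac{k+1}{2}-s) - \psi'(s+\tfrac{k-1}{2})\bigr),
\end{equation*}
whose right-hand side is holomorphic in $(0,1)$ since $\Lambda_f(s,\cdot)$ is entire and the trigamma poles lie outside the critical strip for $k \geq 2$. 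Under the standing assumption, the functional equation forces $G_{\overline{f},p}(1-s)$ also to be holomorphic on $(0,1)$; combined with the boundary and right half-plane holomorphy above, both $G_{f,p}$ and $G_{\overline{f},p}$ are entire.

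With $G_{f,p}$ entire, I would then run the mechanism of \autoref{pole_detection_prop} with $G_{f,p}$ playing the role of $\Delta_{f,a,q}$: take an inverse Mellin transform against $(-iz)^{-s-(k-1)/2}$, shift the contour past the critical strip (no residues to collect), return via the functional equation, and apply a Mellin transform along $\Re(z) = \alpha$ for $\alpha \in \Q^\times$. The output is that for every $\alpha \in \Q^\times$,
\begin{equation*}
G_{f,p}(s,\alpha) - (\text{explicit root number factor})\, G_{\overline{f},p}\bigl(1-s, -\tfrac{1}{Np^2\alpha}\bigr)
\end{equation*}
is holomorphic in $\Re(s) > 0$. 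The crux is that although under the assumption we cannot directly see poles of $G_{f,p}$ in $(0,1)$, this twisted identity exposes pole cancellations on the boundary line $\Re(s) = 0$ that are tractable by explicit local computation.

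For the contradiction I would choose an auxiliary prime $\ell \nmid Np$ with $\lambda_f(\ell)^2 \neq 4\xi(\ell)$, so that $P_{f,\ell}(\ell^{-s})$ has two distinct simple zeros on $\Re(s) = 0$ (on this line by Ramanujan). Taking $\alpha = a/\ell$ with $(a,\ell) = 1$ and expanding $G_{f,p}(s, a/\ell)$ multiplicatively modulo $\ell$ via \eqref{D_expansion_eq}--\eqref{D_trivial_char_eq}, a $R_{f,\ell}(\ell^{-s}) L_f(s)$ term appears whose simple poles at the local factor zeros on $\Re(s) = 0$ do not cancel in $G_{f,p}(s, a/\ell)$: the cancellation that occurred in $G_{f,p}(s)$ was between twists with common denominator $p$, whereas the new boundary poles arise from the denominator $\ell$. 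The dual term $G_{\overline{f},p}(1-s,\cdot)$ contributes its analogous boundary poles only on $\Re(s) = 1$, so it cannot cancel them, contradicting the holomorphy of the preceding step. The main obstacle will be this last step, namely the explicit verification that the $R_{f,\ell}$ residues do not accidentally vanish in the linear combination defining $G_{f,p}(s, a/\ell)$; this is a linear-independence check depending on the images of $1$ and $-\overline{N}$ modulo $\ell$, and if a given $\ell$ fails then varying $\ell$ over the infinite set of primes with $\lambda_f(\ell)^2 \neq 4\xi(\ell)$ (and adjusting $a$) recovers the contradiction.
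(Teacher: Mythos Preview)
Your overall strategy is right, but two key steps are mis-specified and the argument does not close as written.

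First, the ``twisted identity'' obtained after the Mellin transform along $\Re(z)=\alpha$ does \emph{not} take the form
\[
G_{f,p}(s,\alpha)-(\text{factor})\,G_{\overline{f},p}\!\left(1-s,-\tfrac{1}{Np^2\alpha}\right).
\]
The contour-shift/Mellin mechanism of \autoref{pole_detection_prop} (via \autoref{analytic_continuation_lemma}) produces instead a relation between $G_{f,p}(s,\alpha)$ and an asymptotic expansion $\sum_{m=0}^{M-1} c_m(s,\alpha)\,G_{\overline{f},p}\!\left(s+m,-\tfrac{1}{Np^2\alpha}\right)$, all evaluated near $s$, not $1-s$. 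Consequently the dual term's boundary $R$-poles also sit on $\Re(s)=0$ (for $m=0$), not on $\Re(s)=1$; your separation-of-poles argument in step~9 collapses. In the paper this is handled by a Vandermonde trick: vary $\alpha=\tfrac{1}{q_j}$ over several primes $q_j$ in a fixed class mod $Np^2$ to kill all but one $m$-term, and then show separately that the dual pieces $G_{\overline{f},p}(s+m_0,\cdot)$ are holomorphic off $\tfrac{1}{2}\Z$.

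Second, your step~8 misidentifies the surviving singular term. When you expand $G_{f,p}(s,\tfrac{a}{\ell})$ using \autoref{Delta_hardcore_pole_location_lemma}, the term proportional to $R_{f,\ell}(\ell^{-s})\Lambda_f(s)$ is independent of the numerator $a'$ and therefore \emph{cancels} in the difference $\Delta_{f,1,p}(s,\tfrac{a}{\ell})-\Delta_{f,-\overline{N},p}(s,\tfrac{a}{\ell})$. What remains (up to holomorphic terms on $\Re(s)\le 0$, $s\notin\tfrac{1}{2}\Z$) is
\[
\frac{\tau(\chi_0)}{p-1}\sum_{\substack{\psi\Mod{p}\\ \psi\ne\psi_0}}\tau(\overline{\psi})\bigl(1-\psi(-\overline{N})\bigr)\,R_{f\otimes\psi,\ell}(\ell^{-s})\,\Lambda_{f\otimes\psi}(s),
\]
a sum over characters $\psi$ in which each $R_{f\otimes\psi,\ell}$ has its poles at the zeros of $P_{f\otimes\psi,\ell}(\ell^{-s})$. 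Since $\lambda_{f\otimes\psi}(\ell)=\lambda_f(\ell)\psi(\ell)$, these zero-sets generically differ from one $\psi$ to another, so there is no common pole to exhibit and no linear-independence check of the type you describe suffices. The paper resolves this by choosing the auxiliary prime $q$ in the class $q\equiv 1\pmod{p}$, which forces $\psi(q)=1$ and hence $R_{f\otimes\psi,q}=R_{f,q}$ for all $\psi$; the common factor $R_{f,q}(q^{-s})$ can then be pulled out, and taking residues at a simple zero of $P_{f,q}(q^{-s})$ yields $\Lambda_f(it,\tfrac{1}{p})=\Lambda_f(it,-\tfrac{\overline{N}}{p})$ on a dense set, a contradiction. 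Both ingredients---the Vandermonde decoupling and the congruence $q\equiv 1\pmod{p}$---are missing from your sketch and are not optional.
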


\begin{remark}
    Our proof of \autoref{pole_existence_prop} can easily be adapted to obtain infinitely many poles of $G_{f, p}(s)$ in $\Re(s) \in (0, 1)$. Such a result has the same strength for our application as the existence of a single pole, so for simplicity we stick with the current statement.
\end{remark}

Assuming \autoref{pole_existence_prop}, we have the following consequence which will be the starting point in the course of our subsequent analysis.

\begin{proposition}[Existence of poles with large real part]\label{H_large_pole_existence_prop}
    There exists $\alpha_f \in \Q^\times$ such that at least one of $H_{f, \alpha_f}(s)$ or $H_{\overline{f}, \alpha_f}(s)$ has a pole in $\Re(s) \in \left[\frac{1}{2}, 1\right)$.
\end{proposition}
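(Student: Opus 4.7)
The plan is to combine \autoref{pole_existence_prop} with the identity \eqref{key_H_diff_eq}, and then promote an arbitrary pole of $G_{f,p}$ in the critical strip to a pole in $\Re(s) \in [1/2, 1)$ by exploiting an $s \leftrightarrow 1-s$ symmetry built into $G_{f,p}$ itself.

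Fix a prime $p \neq N+1$ with $p \equiv 1 \pmod{N}$ (which exists by Dirichlet), and let $d = (p-1)/N$. By \autoref{pole_existence_prop}, $G_{f,p}$ has some pole $\rho_0$ with $\Re(\rho_0) \in (0,1)$. Since $p^{1-2s}$ is entire and nowhere vanishing, the identity \eqref{key_H_diff_eq} forces at least one of $H_{f,1}, H_{f,1/p}, H_{f,d}, H_{f,d/p}$ to have a pole at $\rho_0$. If $\Re(\rho_0) \geq 1/2$, we conclude immediately by taking $\alpha_f$ to be the corresponding element of $\{1, 1/p, d, d/p\}$.

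Suppose instead $\Re(\rho_0) < 1/2$. The claim is that $G_{\overline{f}, p}$ has a pole at $1-\rho_0$, whose real part then exceeds $1/2$; once this is established, rerunning the previous paragraph with $\overline{f}$ in place of $f$ produces some $\alpha_f \in \{1, 1/p, d, d/p\}$ such that $H_{\overline{f}, \alpha_f}$ has a pole at $1-\rho_0 \in [1/2, 1)$. The observation driving the claim is that the involution $a \mapsto -\overline{Na} \pmod{p}$ swaps $1$ and $-\overline{N}$: indeed $-\overline{N\cdot 1} \equiv -\overline{N}$ and $-\overline{N\cdot(-\overline{N})} \equiv -\overline{-1} \equiv 1 \pmod{p}$. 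Applying \autoref{delta_twist_funct_eq_prop} with $(a, q) = (1, p)$ and with $(a,q)=(-\overline{N}, p)$ and subtracting therefore yields an identity of the form
\begin{equation*}
    G_{f,p}(s) = -\rn_f \xi(p)(Np^2)^{1/2-s}\, G_{\overline{f}, p}(1-s) + E(s),
\end{equation*}
where $E(s)$ is built from $\Lambda_f(s, 1/p) - \Lambda_f(s, -\overline{N}/p)$ multiplied by the trigamma difference $\psi'((k+1)/2 - s) - \psi'(s + (k-1)/2)$. Both factors of $E(s)$ are holomorphic on $\Re(s) \in (0,1)$ (the gamma and trigamma poles lie at real parts $\leq 0$ or $\geq 1$ for $k \geq 1$), so the pole sets of $G_{f,p}$ and $G_{\overline{f}, p}$ inside the open critical strip are interchanged by $s \mapsto 1-s$, giving the claim.

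The main point — and the reason we are forced to pass to $\overline{f}$ in the second case — is the algebraic compatibility that makes $G_{f,p}$ self-dual (up to $f \leftrightarrow \overline{f}$) under the functional equation: the specific additive twists $1/p$ and $-\overline{N}/p$ defining $G_{f,p}$ form an orbit of $a \mapsto -\overline{Na}$, so \autoref{delta_twist_funct_eq_prop} closes up cleanly on $G_{f,p}$ and produces the Voronoi-type reflection $s \leftrightarrow 1-s$ on which the argument hinges.
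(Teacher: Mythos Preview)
Your argument is correct and follows essentially the same route as the paper's proof: apply \autoref{delta_twist_funct_eq_prop} with $a=1$ and $a=-\overline{N}$, observe that the involution $a\mapsto -\overline{Na}$ swaps these, deduce that $G_{f,p}(s)$ and $G_{\overline{f},p}(1-s)$ share poles in the open critical strip (since the $\Lambda_f(s,\cdot)$ factors are entire and the trigamma poles lie outside $0<\Re(s)<1$), and then feed the resulting pole with real part $\ge 1/2$ into \eqref{key_H_diff_eq} for either $f$ or $\overline{f}$. The only cosmetic difference is that the paper phrases the reflection step as ``at least one of $G_{f,p}$ or $G_{\overline{f},p}$ has a pole in $\Re(s)\in[1/2,1)$'' rather than splitting into the two cases $\Re(\rho_0)\ge 1/2$ and $\Re(\rho_0)<1/2$ as you do.
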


\begin{proof}
    For any prime $p \not= N+1$ such that $p \equiv 1 \pmod{N}$, from the functional equation in \autoref{delta_twist_funct_eq_prop} we have that
    \begin{equation*}
        G_{f, p}(s) + \rn_f (Np^2)^{\frac{1}{2}-s} G_{\overline{f}, p}(1-s) = \left(\Lambda_f\left(s, \frac{1}{p}\right) - \Lambda_f\left(s, -\frac{\overline{N}}{p}\right)\right) \left( \psi' \left(\frac{k+1}{2}-s\right) - \psi'\left(s + \frac{k-1}{2}\right) \right),
    \end{equation*}
    as $\xi(p) = 1$. Since $\Lambda_f\left(s, \frac{1}{p}\right)$ and $\Lambda_f\left(s, -\frac{\overline{N}}{p}\right)$ are both entire, as easily follows from expanding into characters (see \eqref{lambda_additive_twist_char_expansion_eq} below for details), and the poles of $\psi'(s)$ coincide with the poles of $\Gamma(s)$, we conclude that $G_{f, p}(s)$ and $G_{\overline{f}, p}(1-s)$ have the same poles in $\Re(s) \in (0, 1)$, as the RHS of the equation above is holomorphic in that region.
    
    Combining this with \autoref{pole_existence_prop}, we get that at least one of $G_{f, p}(s)$ or $G_{\overline{f}, p}(s)$ has a pole in $\Re(s) \in \left[\frac{1}{2}, 1\right)$, so \eqref{key_H_diff_eq} shows that the desired result holds for some $\alpha_f \in \left\{d, \frac{d}{p}, 1, \frac{1}{p}\right\}$, where $d = \frac{p-1}{N}$ as before.
    
\end{proof}

\subsection{Preliminary results}

Before proceeding to the proof of \autoref{pole_existence_prop}, we take note of certain computations essentially contained in \cite{BMN19} that will be relevant for our argument. Those are reproduced in the auxiliary results below for ease of reference.

\begin{lemma}[Inverse Mellin transform computations]\label{mellin_computation_lemma}
    Let $0 < \eta < 1/2$. Then for $z \in \H$ we have
    \begin{equation*}
        I^R_{f, a, q}(z) := \frac{1}{2\pi i } \int\limits_{\Re(s) = 1 + \eta} \Delta_{f, a, q}(s)(-iz)^{-s - \frac{k-1}{2}} \dd s = F_{f, a, q}(z)
    \end{equation*}
    and
    \begin{equation*}
        \begin{split}
            I^L_{f, a, q}(z) :=& \ \frac{1}{2\pi i } \int\limits_{\Re(s) = -\eta} \Delta_{f, a, q}(s)(-iz)^{-s - \frac{k-1}{2}} \dd s \\
            =& \ \frac{\rn_f \xi(q)}{(-i \sqrt{N}qz)^k} F_{\overline{f}, -\overline{Na}, q}\left(-\frac{1}{Nq^2z}\right) - A_{f, a, q}(z) + B_{f,a , q}(z) - \Res_{s=0} \Delta_{f, a, q}(s).
        \end{split}
    \end{equation*}
\end{lemma}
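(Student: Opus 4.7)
The plan is to handle the two integrals separately: $I^R_{f,a,q}(z)$ by direct Mellin--Barnes inversion, and $I^L_{f,a,q}(z)$ by first invoking the functional equation of \autoref{delta_twist_funct_eq_prop}.

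For $I^R_{f,a,q}(z)$, the contour $\Re(s) = 1 + \eta$ lies in the region of absolute convergence of $D_{f,a,q}(s) = \sum_n c_{f,a,q}(n) n^{-s}$, so I would swap sum and integral. Using $\Gamma_\C(w) = 2(2\pi)^{-w}\Gamma(w)$ and the substitution $w = s + \tfrac{k-1}{2}$, each inner integral reduces to the classical identity
\begin{equation*}
    e^{-y} = \frac{1}{2\pi i}\int_{\Re(w) = c}\Gamma(w)\, y^{-w}\, dw \qquad (c > 0,\ \Re(y) > 0)
\end{equation*}
applied with $y = -2\pi i n z$, where $\Re(y) = 2\pi n \Im(z) > 0$ for $z \in \H$. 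This produces $2 n^{(k-1)/2} e(nz)$ per term, and summing over $n$ recovers $F_{f,a,q}(z)$.

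For $I^L_{f,a,q}(z)$ the Dirichlet expansion is unavailable on the contour, so I would substitute the functional equation of \autoref{delta_twist_funct_eq_prop} to split the integrand into a \emph{dual} piece proportional to $\Delta_{\overline{f},-\overline{Na},q}(1-s)$ and a \emph{digamma} piece proportional to $\Lambda_f(s, a/q)[\psi'(\tfrac{k+1}{2} - s) - \psi'(s + \tfrac{k-1}{2})]$. For the dual piece, the substitution $w = 1 - s$ moves the contour to $\Re(w) = 1 + \eta$; setting $z' := -1/(Nq^2 z) \in \H$ and using the identity $-iz' = 1/(Nq^2 (-iz))$, the powers of $Nq^2$ and $(-iz)$ reassemble into $(-iz')^{-w - (k-1)/2}$ up to the scalar $\rn_f \xi(q) (-i\sqrt{N}qz)^{-k}$, so by the same Mellin--Barnes computation as for $I^R$ this piece equals $\rn_f \xi(q)(-i\sqrt{N}qz)^{-k}\, F_{\overline{f},-\overline{Na},q}(z')$. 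For the digamma piece, the reflection identity $\psi'(x) + \psi'(1-x) = \pi^2/\sin^2(\pi x)$ with $x = s - \tfrac{k-1}{2}$, combined with the periodicity $\sin^2(\pi(s - \tfrac{k-1}{2})) = \sin^2(\pi(s + \tfrac{k-1}{2}))$, rewrites the integrand as the difference of the integrands defining $B_{f,a,q}(z)$ and $A_{f,a,q}(z)$, but integrated over $\Re(s) = -\eta$ rather than $\Re(s) = k/2$.

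To close, the cleanest route is the residue theorem on $[-\eta, 1+\eta] \times [-T, T]$, $T \to \infty$. Stirling decay of $\Gamma_\C(s + \tfrac{k-1}{2})$ on vertical strips dominates the growth of $(-iz)^{-s-(k-1)/2}$ (since $|\arg(-iz)| < \pi/2$ for $z \in \H$), so the horizontal edges vanish; hence $I^R - I^L$ equals the sum of residues of $\Delta_{f,a,q}(s)(-iz)^{-s-(k-1)/2}$ in the strip. By definition this sum splits as $S_{f,a,q}(z)$ plus the boundary contribution at $s = 0$, which a short inspection of \eqref{D_expansion_eq}--\eqref{D_trivial_char_eq} shows is at worst a simple pole. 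Combining $I^R = F_{f,a,q}(z)$ with the identity $S_{f,a,q}(z) = F_{f,a,q}(z) - \rn_f \xi(q)(-i\sqrt{N}qz)^{-k} F_{\overline{f},-\overline{Na},q}(z') + A_{f,a,q}(z) - B_{f,a,q}(z)$ stated in the paragraph preceding \autoref{pole_detection_prop} then yields the claimed expression for $I^L$. The main obstacle, should one prefer instead to directly shift the digamma contour from $\Re(s) = -\eta$ to $\Re(s) = k/2$ and bypass that shortcut, is showing that the residues accumulated at the double poles of $\pi^2/\sin^2(\pi(s + \tfrac{k-1}{2}))$ and at the poles of $\psi'(s \pm \tfrac{k-1}{2})$ collapse to precisely $-\Res_{s=0}\Delta_{f,a,q}(s)$; this rests on the trivial zeros of $\Lambda_f(s, a/q)$ (inherited from $\Gamma_\C$) killing the spurious contributions in the strip.
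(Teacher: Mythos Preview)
Your approach matches the paper's, which simply defers the details to \cite[Lemma~2.3]{BMN19}: compute $I^R$ by direct Mellin--Barnes inversion, and for $I^L$ insert the functional equation of \autoref{delta_twist_funct_eq_prop}, producing a dual piece (yielding the $F_{\overline{f}}$ term after the substitution $w=1-s$) and a digamma piece that the $\psi'$ reflection formula converts into the $B-A$ integrand on the line $\Re(s)=-\eta$. Your handling of each of these pieces is correct.

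One caution: your ``cleanest route'' is circular. The identity
\[
S_{f,a,q}(z) = F_{f,a,q}(z) - \frac{\rn_f\xi(q)}{(-i\sqrt{N}qz)^k}F_{\overline{f},-\overline{Na},q}\!\left(-\tfrac{1}{Nq^2z}\right) + A_{f,a,q}(z) - B_{f,a,q}(z)
\]
stated before \autoref{pole_detection_prop} is not an independent input; it is precisely the content of this lemma combined with the contour shift $I^R - I^L = S_{f,a,q}(z) + (\text{residue at }s=0)$. Both are quoted in the paper from the same source \cite[Lemma~2.3]{BMN19}, so invoking one to establish the other merely restates the claim. Your alternative---shifting the digamma piece directly from $\Re(s)=-\eta$ to $\Re(s)=k/2$---is the honest route, and the bookkeeping there is simpler than you suggest. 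If you keep the digamma integrand in its original form
\[
\Lambda_f\!\left(s,\tfrac{a}{q}\right)\Big[\psi'\!\big(\tfrac{k+1}{2}-s\big)-\psi'\!\big(s+\tfrac{k-1}{2}\big)\Big](-iz)^{-s-\frac{k-1}{2}}
\]
rather than splitting into the $B$ and $A$ integrands separately, the only pole encountered in the strip $-\eta<\Re(s)<k/2$ is at $s=0$, and only when $k=1$, coming from $\psi'(s)$. The ``spurious contributions'' you worry about are an artifact of treating $B$ and $A$ individually; they cancel identically by the reflection formula before any contour is moved, so no appeal to trivial zeros of $\Lambda_f(s,\tfrac{a}{q})$ is needed.
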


\begin{proof}
    This follows from the functional equation in \autoref{delta_twist_funct_eq_prop} (for the case of $I^L_{f, a, q}(z)$) and a computation of inverse Mellin transforms. The details are contained in the proof of \cite[Lemma 2.3]{BMN19} --- see in particular equations $(2.9)$ and $(2.12)$ there, and keep in mind that the residue at $s=0$ only contributes if $k=1$. Our statement above corrects a small typo in the computation of this residue at the last display of page 382 of the reference, where the term $\Delta_{f, a, q}\left(s, \frac{a}{q}\right)$ should be replaced by $\Delta_{f, a, q}(s)$, according to the functional equation.
    
\end{proof}

\begin{lemma}[Auxiliary analytic continuations]\label{aux_comp_continuation_lemma}
    Let $\alpha \in \Q^\times$. Then for any $M \in \Z_{\geq 0}$,
    \begin{equation*}
        \begin{split}
            &\int_0^{\frac{|\alpha|}{4}} (-i \sqrt{N} q (\alpha+iy))^{-k} F_{\overline{f}, -\overline{Na}, q}\left(-\frac{1}{Nq^2(\alpha+iy)}\right) y^{s+\frac{k-1}{2}}\frac{dy}{y} \\
            &- (i \sgn(\alpha))^k \sum_{m=0}^{M-1} (-i\alpha)^{-m} \binom{s+m-\frac{k+1}{2}}{m} (Nq^2\alpha^2)^{s-\frac{1}{2}+m} \Delta_{\overline{f}, -\overline{Na}, q}\left(s+m, -\frac{1}{Nq^2\alpha}\right)
        \end{split}
    \end{equation*}
    continues to a holomorphic function in $\{ s \in \C : \Re(s) > 1-M\}$. Furthermore, each of
        \begin{equation*}
        \int_0^{\frac{|\alpha|}{4}} F_{f, a, q}(\alpha + iy) y^{s + \frac{k-1}{2}}\frac{dy}{y} - \Delta_{f, a, q}(s, \alpha),
    \end{equation*}
    \begin{equation*}
        \Gamma_\C(s)^{-1} \int_0^{\frac{|\alpha|}{4}} A_{f, a, q}(\alpha + iy) y^s \frac{dy}{y},
    \end{equation*}
    and
    \begin{equation*}
        \Gamma_\C(s)^{-1} \int_0^{\frac{|\alpha|}{4}} B_{f, a, q}(\alpha + iy) y^s \frac{dy}{y}
    \end{equation*}
    continues to an entire function of $s$.
\end{lemma}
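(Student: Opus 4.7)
The plan is to reduce each claim to a Mellin-transform computation, treating the three ``entire'' statements separately from the harder first statement. For the entire statements I would write
\begin{equation*}
\int_0^{|\alpha|/4} = \int_0^\infty - \int_{|\alpha|/4}^\infty.
\end{equation*}
The tail integral $\int_{|\alpha|/4}^\infty$ is entire in $s$ because the integrands decay rapidly as $y \to \infty$: for $F_{f,a,q}(\alpha+iy)$ directly from the factor $e^{-2\pi n y}$ in each term of the defining Dirichlet series, and for $A_{f,a,q}$ and $B_{f,a,q}$ after shifting their defining $w$-contour arbitrarily far to the right (justified because $\Lambda_f(w,a/q)$ is entire of polynomial growth and the factors $\psi'(w \pm (k-1)/2)$ and $\pi^2/\sin^2(\pi(w + (k-1)/2))$ have no further poles for $\Re(w)$ large). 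For the $F_{f,a,q}$ claim, expanding the defining series and integrating term by term using
\begin{equation*}
\int_0^\infty e^{-2\pi n y}\, y^{s+(k-1)/2}\, \frac{dy}{y} = \Gamma\!\left(s + \tfrac{k-1}{2}\right)(2\pi n)^{-s-(k-1)/2}
\end{equation*}
reproduces $\Delta_{f,a,q}(s,\alpha)$ exactly, so the difference is entire. For the $A_{f,a,q}$ and $B_{f,a,q}$ cases, I would substitute the contour-integral definitions, interchange the $w$- and $y$-integrals, and evaluate $\int_0^\infty (-i(\alpha+iy))^{-w-(k-1)/2} y^s\, dy/y$ by the Mellin--Barnes identity, which produces $\Gamma(s)\Gamma(w+(k-1)/2-s)/\Gamma(w+(k-1)/2)$ times a power of $\alpha$ and a unimodular phase. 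The prefactor $\Gamma_\C(s)^{-1}$ cancels the $\Gamma(s)$, leaving a $w$-integral that is entire in $s$ by absolute convergence on vertical strips.

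For the first, harder, statement, the plan is to use the integral representation
\begin{equation*}
\Delta_{\overline{f}, -\overline{Na}, q}(s', \xi) = \int_0^\infty F_{\overline{f}, -\overline{Na}, q}(\xi + i\tau)\, \tau^{s' + (k-1)/2}\, \frac{d\tau}{\tau}
\end{equation*}
with $\xi = -1/(Nq^2\alpha)$, combined with a Taylor expansion of the change of variables between $y$ and $\tau := \Im(-1/(Nq^2(\alpha+iy)))$ and a Taylor expansion of the prefactor $(-i\sqrt{N}q(\alpha+iy))^{-k}$ in powers of $y/\alpha$. To order $m < M$ this produces exactly the summand $(i\sgn(\alpha))^k(-i\alpha)^{-m}\binom{s+m-(k+1)/2}{m}(Nq^2\alpha^2)^{s-1/2+m}\Delta_{\overline{f},-\overline{Na},q}(s+m, -1/(Nq^2\alpha))$, with the binomial coefficient arising naturally from the Taylor expansion of $(1+iy/\alpha)^{-\beta}$. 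The remainder after $M$ terms is $O(y^M)$ near $y=0$, so its Mellin integral converges absolutely for $\Re(s) > 1 - M$. An equivalent, perhaps cleaner, route is to substitute the inverse Mellin representation of $F_{\overline{f},-\overline{Na},q}$ from \autoref{mellin_computation_lemma}, swap with the $y$-integral, apply the Mellin--Barnes identity to the inner integral, and then shift the resulting $s'$-contour past the $M$ poles of $\Gamma(1-s-s')/\Gamma((k+1)/2 - s')$, picking up residues that reassemble into the same $M$ terms.

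The main obstacle will be a careful accounting of phase factors and branches: $(-i\sqrt{N}q(\alpha+iy))^{-k}$, $(-i(\alpha+iy))^{-w-(k-1)/2}$, and $(-i\alpha)^{s-w}$ all refer to the principal branch of the logarithm, and the factor $(i\sgn(\alpha))^k$ in the statement emerges only from tracking these consistently, particularly when $\alpha<0$. A second delicate point is matching the residues of the Gamma ratio to the exact binomial coefficients $\binom{s+m-(k+1)/2}{m}$ on the nose (rather than up to an overall constant), which follows from the identity $\Gamma(a-m)/\Gamma(a) = (-1)^m/((a-1)(a-2)\cdots(a-m))$. The various interchanges of sums and integrals and the contour shifts are justified in the usual way via Stirling's formula on vertical lines combined with the polynomial growth of $\Lambda_f(w,a/q)$ and $\Delta_{\overline{f},-\overline{Na},q}(s',\cdot)$.
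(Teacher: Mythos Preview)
Your approach matches the paper's, which simply cites \cite[Lemmas 2.4--2.7]{BMN19} and sketches the Taylor-expansion idea for the first statement; your treatment of the $F_{f,a,q}$ term and of the hard first claim is correct and follows that sketch.

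One correction is needed for $B_{f,a,q}$: the factor $\pi^2/\sin^2(\pi(w+(k-1)/2))$ has double poles at \emph{every} $w \in \Z - (k-1)/2$, including arbitrarily far to the right, so you cannot shift that $w$-contour rightward to force decay of the tail $\int_{|\alpha|/4}^\infty$, and the same obstruction reappears when you try to show the post--Mellin--Barnes $w$-integral is entire in $s$. The cleanest fix is to skip the $\int_0^\infty - \int_{|\alpha|/4}^\infty$ split for $B$ and work directly with the finite integral: swap the $y$- and $w$-integrals for $\Re(s)>0$, then Taylor-expand $(y-i\alpha)^{-w-(k-1)/2}$ at $y=0$ to see that the inner $y$-integral over $[0,|\alpha|/4]$ continues meromorphically in $s$ with only simple poles at $s \in \Z_{\le 0}$, which $\Gamma_\C(s)^{-1}$ then cancels. (The split does work for $A_{f,a,q}$, since $\psi'(w\pm(k-1)/2)$ genuinely has no poles to the right of $\Re(w)=k/2$.)
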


\begin{proof} 
    Those are precisely \cite[Lemmas 2.4, 2.5, 2.6, and 2.7]{BMN19} in our notation. The first result is the hardest to establish, and it follows from Taylor expanding the phases in $F_{\overline{f}, -\overline{Na}, q}$ and carefully analyzing the ensuing Mellin transforms. The idea is that as $z :=\alpha+iy\in \H$ ranges over the vertical half-line $\Re(z) = \alpha$, $w := -\frac{1}{Nq^2z}\in \H$ ranges over a semicircle centered in the $x$-axis with an endpoint at $-\frac{1}{Nq^2 \alpha}$, so to a first approximation the input $w$ of $F_{\overline{f}, -\overline{Na}, q}$ in the first integral can be considered to range over the vertical half-line $\Re(w) = -\frac{1}{Nq^2\alpha}$, which by Mellin inversion gives rise to a term of the form $\Delta_{\overline{f}, -\overline{Na}, q}\left(s, -\frac{1}{Nq^2\alpha} \right)$. The other terms arise from lower order components of the aforementioned Taylor expansion.
    
\end{proof}

\begin{lemma}[Analytic continuation of Mellin transforms]\label{analytic_continuation_lemma}
    Let $\alpha \in \Q^\times$ and $M \in \Z_{\geq 0}$. Then 
    \begin{equation*}
        \int_0^{\frac{|\alpha|}{4}} I^R_{f, a, q}(\alpha + iy) y^{s+\frac{k-1}{2}} \frac{dy}{y} - \Delta_{f, a, q}(s, \alpha)
    \end{equation*}
    continues to an entire function of $s$, and
    \begin{equation*}
        \begin{split}
            &\int_0^{\frac{|\alpha|}{4}} \left(I^L_{f, a, q}(\alpha + iy) + \Res_{s=0} \Delta_{f, a, q}(s)\right) y^{s+\frac{k-1}{2}} \frac{dy}{y} \\
            &- \rn_f \xi(q) (i \sgn(\alpha))^k \sum_{m=0}^{M-1} (-i\alpha)^{-m} \binom{s+m-\frac{k+1}{2}}{m} (Nq^2\alpha^2)^{s-\frac{1}{2}+m} \Delta_{\overline{f}, -\overline{Na}, q}\left(s+m, -\frac{1}{Nq^2\alpha}\right)
        \end{split}
    \end{equation*}
    continues to a meromorphic function in $\{s \in \C : \Re(s) > 1-M\}$ whose only possible poles in that region must be at $s = \frac{1-k}{2} - n$, for $n \in \Z_{\geq 0}$.
\end{lemma}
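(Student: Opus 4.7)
The plan is to reduce both assertions directly to \autoref{aux_comp_continuation_lemma} by substituting the closed-form expressions for $I^R_{f, a, q}(z)$ and $I^L_{f, a, q}(z)$ provided by \autoref{mellin_computation_lemma}, and then verify that the pole data of the resulting pieces match the claim. No new analytic content beyond those two lemmas will be needed; the work is simply in packaging their consequences.

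For the first assertion, $I^R_{f, a, q}(z) = F_{f, a, q}(z)$ by \autoref{mellin_computation_lemma}, so the expression in question reduces to $\int_0^{|\alpha|/4} F_{f, a, q}(\alpha + iy) y^{s + \frac{k-1}{2}} \frac{dy}{y} - \Delta_{f, a, q}(s, \alpha)$, which is entire by the second item in \autoref{aux_comp_continuation_lemma}.

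For the second assertion, the key manipulation is to use \autoref{mellin_computation_lemma} to rewrite
\begin{equation*}
    I^L_{f, a, q}(z) + \Res_{s = 0} \Delta_{f, a, q}(s) = \frac{\rn_f \xi(q)}{(-i \sqrt{N} q z)^k} F_{\overline{f}, -\overline{Na}, q}\left(-\frac{1}{Nq^2 z}\right) - A_{f, a, q}(z) + B_{f, a, q}(z),
\end{equation*}
which splits the integral in the statement into three pieces that I handle separately. The piece involving $F_{\overline{f}, -\overline{Na}, q}$, after multiplication by $\rn_f \xi(q)$, differs from the sum subtracted in the statement by a function holomorphic in $\{\Re(s) > 1 - M\}$ by the first item of \autoref{aux_comp_continuation_lemma}. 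For the $A$- and $B$-pieces, substituting $s' = s + \frac{k-1}{2}$ into the remaining items of \autoref{aux_comp_continuation_lemma} shows that each integral equals $\Gamma_\C\bigl(s + \frac{k-1}{2}\bigr)$ times a function entire in $s$; its poles can therefore occur only at the poles of $\Gamma_\C\bigl(s + \frac{k-1}{2}\bigr)$, namely at $s = \frac{1-k}{2} - n$ with $n \in \Z_{\geq 0}$. Summing the three contributions yields the desired meromorphic continuation in $\{\Re(s) > 1 - M\}$ with poles confined to the prescribed set.

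Since every piece is handled by direct quotation of \autoref{aux_comp_continuation_lemma}, I do not anticipate a genuine obstacle: the delicate part -- governing the behavior of the $F_{\overline{f}, -\overline{Na}, q}$-integral near $y = 0$, where the conformal map $z \mapsto -1/(Nq^2 z)$ carries the vertical half-line $\Re(z) = \alpha$ onto a semicircle with endpoint $-1/(Nq^2\alpha)$ on the real axis -- has already been absorbed into the first item of that lemma via a Taylor expansion argument.
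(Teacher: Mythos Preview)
Your proposal is correct and follows essentially the same approach as the paper: substitute the expressions from \autoref{mellin_computation_lemma} and apply the four items of \autoref{aux_comp_continuation_lemma} term by term, noting that the only possible poles arise from the $A_{f,a,q}$ and $B_{f,a,q}$ integrals via the factor $\Gamma_\C\bigl(s+\tfrac{k-1}{2}\bigr)$. Your write-up is simply a more explicit unpacking of the paper's two-sentence proof.
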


\begin{proof}
    Follows directly from plugging the equations in \autoref{mellin_computation_lemma} into the integrals above and using \autoref{aux_comp_continuation_lemma} for each term that arises. The only possible poles come from the integral terms corresponding to $A_{f, a, q}$ and $B_{f, a, q}$, whose poles must be poles of $\Gamma_\C\left(s+\frac{k-1}{2}\right)$.
    
\end{proof}

The next two results determine the locations of the poles of $\Delta_{f, a, q}(s)$ and some of its additive twists. \autoref{Delta_additive_twist_pole_location_lemma} is essentially contained in \cite[Proposition 2.2]{BMN19}, while \autoref{Delta_hardcore_pole_location_lemma} requires a more careful analysis.

\begin{lemma}[No exotic poles for $\Delta_{f, a, q}$]\label{Delta_additive_twist_pole_location_lemma}
    The poles of $\Delta_{f, a, q}(s)$ satisfy $\Re(s) \in (0, 1)$ or $s = \frac{1-k}{2}-n$ for some $n\in \Z_{\geq 0}$.
\end{lemma}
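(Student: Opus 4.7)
The plan is to reduce the lemma to the corresponding statement for the untwisted $\Delta_f$ and its multiplicative twists, and then invoke the standard location of zeros of $\GL(2)$ $L$-functions. First I would appeal to the decomposition \eqref{Delta_additive_twist_decomposition_eq}, which expresses $\Delta_{f,a,q}(s)$ as $\bigl[(q-1)/\phi(q) + q\,\tau(\chi_0) P_{f,q}(q^{-s})/\phi(q)\bigr]\Delta_f(s) + \phi(q)^{-1}\sum_{\chi \neq \chi_0}\tau(\overline{\chi})\chi(a)\Delta_{f\otimes\chi}(s)$. Since $P_{f,q}(q^{-s})$ is a polynomial in $q^{-s}$ and hence entire, the poles of $\Delta_{f,a,q}(s)$ are contained in the union of the poles of $\Delta_f(s)$ together with those of $\Delta_{f\otimes\chi}(s)$ as $\chi$ runs over non-trivial characters mod $q$, and it suffices to verify the conclusion for each of these objects.

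Next, for any primitive form $g$ of weight $k$ (so that both $g=f$ and $g=f\otimes\chi$ are covered) I would analyze the poles of $\Delta_g(s) = \Gamma_\C(s+(k-1)/2)\,D_g(s)$ directly from the identity $D_g(s) = L_g''(s) - L_g'(s)^2/L_g(s)$. A Laurent expansion at a zero $\rho$ of $L_g$ of order $m$ shows that $D_g$ has a simple pole at $\rho$ when $m=1$ and is holomorphic (in fact vanishes to order $m-2$) when $m \geq 2$; since $L_g$ is entire, the only poles of $D_g$ are the simple zeros of $L_g$. The Gamma factor has simple poles exactly at the points $s = (1-k)/2 - n$ for $n \in \Z_{\geq 0}$, which coincide with simple trivial zeros of $L_g$ forced by the entirety of $\Lambda_g$. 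Consequently the poles of $\Delta_g(s)$ are contained in the zero set of $L_g$ together with the trivial-zero locations $\{(1-k)/2-n : n \in \Z_{\geq 0}\}$.

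The third step is to locate the zeros of $L_g$. The Euler product gives $L_g(s) \neq 0$ for $\Re(s) > 1$, and non-vanishing on the line $\Re(s) = 1$ (classical for $\GL(2)$) combined with the functional equation relating $\Lambda_g(s)$ to $\Lambda_{\overline{g}}(1-s)$, and with the non-vanishing of $\Gamma_\C$ away from its pole set, transfers this conclusion to show that every zero of $L_g$ with $\Re(s) \leq 0$ is one of the trivial zeros. Hence all non-trivial zeros of $L_g$ lie in the open critical strip $0 < \Re(s) < 1$. Combined with the preceding paragraph this yields the lemma for $\Delta_g$, and via the first step also for $\Delta_{f,a,q}$, noting that the trivial-zero locations $(1-k)/2 - n$ coincide for every primitive form of weight $k$, in particular for each twist $f\otimes\chi$.

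No serious obstacle is anticipated. The most delicate point to verify is the Laurent expansion ruling out an exotic pole of $D_g$ at a multiple zero of $L_g$, and one should also confirm that each $f\otimes\chi$ with $q \in Q(N)$ and $\chi$ non-trivial mod $q$ is itself a primitive form of weight $k$ so that the Gamma-factor analysis applies identically; the former is a short direct computation and the latter is \cite[Theorem 3.2]{AL78}, already recalled in \autoref{setup_section}.
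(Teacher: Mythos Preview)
Your proposal is correct and follows essentially the same approach as the paper: both arguments begin from the decomposition \eqref{Delta_additive_twist_decomposition_eq} and invoke Jacquet--Shalika non-vanishing on $\Re(s)=1$ to rule out poles in $\Re(s)\geq 1$. The only minor difference is in the treatment of $\Re(s)\leq 0$: you handle each $\Delta_{f\otimes\chi}$ componentwise via the functional equation of $\Lambda_{f\otimes\chi}$, whereas the paper applies the functional equation of $\Delta_{f,a,q}$ itself (\autoref{delta_twist_funct_eq_prop}) together with the entirety of $\Lambda_f(s,a/q)$, which is slightly more direct but amounts to the same content.
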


\begin{proof}
    First observe that $\Delta_{f, a, q}(s)$ has no poles with $\Re(s) \geq 1$. Indeed, this follows from \eqref{Delta_additive_twist_decomposition_eq} and the fact that for non-trivial $\chi \pmod{q}$ the poles of $\Delta_f(s)$ and $\Delta_{f}(s, \chi) = \Delta_{f\otimes \chi}(s)$ are at simple zeros of $L_f(s)$ and $L_{f\otimes \chi}(s)$, respectively, but there are no such zeros with $\Re(s) \geq 1$ by non-vanishing for automorphic $L$-functions \cite{JS77}. As a consequence, we can also determine the poles of $\Delta_{f, a, q}(s)$ with $\Re(s) \leq 0$, through the functional equation. Using \eqref{additive_char_expansion_eq} and $\Lambda_f(s, \chi_0) = P_{f, q}(q^{-s}) \Lambda_f(s)$, since $(a, q)=1$ we get
    \begin{equation}\label{lambda_additive_twist_char_expansion_eq}
        \Lambda_f\left(s, \frac{a}{q}\right) = \left(\frac{q-1}{\phi(q)} + \frac{q}{\phi(q)} \tau(\chi_0) P_{f, q}(q^{-s}) \right) \Lambda_f(s) + \frac{1}{\phi(q)} \sum_{\substack{\chi \Mod{q} \\ \chi \not= \chi_0}} \tau(\overline{\chi}) \chi(a) \Lambda_{f \otimes \chi}(s),
    \end{equation}
    so $\Lambda_f\left(s, \frac{a}{q}\right)$ is entire. The poles of $\psi'(s)$ coincide with the poles of $\Gamma(s)$, so \autoref{delta_twist_funct_eq_prop} shows that $\Delta_{f, a, q}(s)$ has no poles with $\Re(s)\leq 0$, except possibly for $s = \frac{1-k}{2}-n$, for some $n\in \Z_{\geq 0}$.
    
\end{proof}

\begin{lemma}[Location of exotic poles for additive twists of $\Delta_{f, a, p}$]\label{Delta_hardcore_pole_location_lemma}
    Let $p, q \in Q(N)$ with $p \not= q$, and let $a, b \in \Z$ with $(a, p)= (b, q) = 1$. If we let $\chi_0 \pmod{q}$ and $\psi_0 \pmod{p}$ denote the trivial characters, then
    \begin{equation*}
        \begin{split}
            \Delta_{f, a, p}\left(s, \frac{b}{q}\right) &+ \tau(\chi_0) \left(\frac{p-1}{\phi(p)} + \frac{p}{\phi(p)} \tau(\psi_0) P_{f, p}(p^{-s})\right) R_{f, q}(q^{-s}) \Lambda_f(s) \\
            & + \frac{\tau(\chi_0)}{\phi(p)}\sum_{\substack{\psi \Mod{p} \\ \psi \not= \psi_0}} \tau\left(\overline{\psi}\right) \psi(a) R_{f\otimes\psi, q}(q^{-s}) \Lambda_{f\otimes \psi}(s)
        \end{split}
    \end{equation*}
    continues to a holomorphic function in $\{s \in \C : \Re(s) \leq 0\} \setminus \frac{1}{2}\Z$.
\end{lemma}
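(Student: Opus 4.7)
The plan is to expand $\Delta_{f,a,p}(s, b/q)$ via the additive-character decomposition \eqref{additive_char_expansion_eq} at modulus $q$:
\begin{equation*}
    \Delta_{f,a,p}\!\left(s, \frac{b}{q}\right) = \frac{q-1}{\phi(q)}\Delta_{f,a,p}(s) + \frac{q}{\phi(q)}\tau(\chi_0)\Delta_{f,a,p}(s, \chi_0) + \frac{1}{\phi(q)}\sum_{\chi \neq \chi_0} \tau(\overline{\chi})\chi(b)\,\Delta_{f,a,p}(s, \chi),
\end{equation*}
valid first for $\Re(s) > 1$ and then throughout $\C$ meromorphically, and to handle the three types of terms separately. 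For the constant piece, \autoref{Delta_additive_twist_pole_location_lemma} gives holomorphy on $\{s \in \C : \Re(s) \leq 0\} \setminus \frac{1}{2}\Z$ at once, since $\frac{1-k}{2} - n \in \frac{1}{2}\Z$ for every $n \in \Z_{\geq 0}$.

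For each non-trivial $\chi \pmod q$, the key observation is that $\Delta_{f,a,p}(s, \chi) = \Delta_{(f\otimes\chi),a,p}(s)$: multiplicatively twisting the defining identity $D_{f,a,p}(s) = D_f(s, a/p) - R_{f,p}(p^{-s}) L_f(s)$ by $\chi$ turns it into $D_{(f\otimes\chi),a,p}(s) = D_{f\otimes\chi}(s, a/p) - R_{f\otimes\chi,p}(p^{-s}) L_{f\otimes\chi}(s)$, using that $D_f(s, \chi) = D_{f\otimes\chi}(s)$ together with the Euler-factor identities $P_{f\otimes\chi,p}(x) = P_{f,p}(\chi(p)x)$ and $R_{f\otimes\chi,p}(x) = R_{f,p}(\chi(p)x)$, both valid because $(p,q)=1$. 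Since $f\otimes\chi$ has level $Nq^2$ and $p \in Q(Nq^2)$, applying \autoref{Delta_additive_twist_pole_location_lemma} to $f\otimes\chi$ again yields holomorphy on $\{\Re(s) \leq 0\} \setminus \frac{1}{2}\Z$.

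The main work is the trivial-character term $\Delta_{f,a,p}(s, \chi_0)$. I would first expand $\Delta_{f,a,p}(s)$ via \eqref{Delta_additive_twist_decomposition_eq} and then multiplicatively twist the resulting identity by $\chi_0$. Because $\chi_0$ equals $1$ on powers of $p$, this twist commutes with the coefficient $\frac{p-1}{\phi(p)} + \frac{p}{\phi(p)}\tau(\psi_0) P_{f,p}(p^{-s})$ and acts only on the individual building blocks $\Delta_f(s)$ and $\Delta_{f\otimes\psi}(s)$ (for $\psi$ non-trivial mod $p$). Applying \eqref{D_trivial_char_eq} to $f$ (with $q \in Q(N)$) and to each $f\otimes\psi$ (with $q \in Q(Np^2)$) rewrites each such block as $P_{\ast,q}(q^{-s})\Delta_\ast(s) - \frac{\phi(q)}{q}R_{\ast,q}(q^{-s})\Lambda_\ast(s)$, where $\ast \in \{f, f\otimes\psi\}$. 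In $\{\Re(s)\leq 0\} \setminus \frac{1}{2}\Z$, the polynomial-times-$\Delta$ pieces are holomorphic, so the only exotic poles are carried by the $R_{\ast,q}(q^{-s})\Lambda_\ast(s)$ terms; collecting their coefficients---in particular picking up the overall factor $\frac{q}{\phi(q)}\tau(\chi_0) \cdot \left(-\frac{\phi(q)}{q}\right) = -\tau(\chi_0)$ from the outer mod-$q$ expansion---one finds that the two correction terms in the statement are precisely the negatives of these contributions, so they cancel. The main obstacle is the combinatorial bookkeeping across the two layers of character expansion (mod $q$ outside, mod $p$ inside), together with the verification of the Euler-factor identities for $P$ and $R$ under multiplicative twists; the final cancellation is then a routine sign-and-coefficient check.
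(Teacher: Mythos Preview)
Your proposal is correct and follows essentially the same route as the paper's proof: expand $\Delta_{f,a,p}(s,b/q)$ via \eqref{additive_char_expansion_eq} at modulus $q$, dispose of the constant piece by \autoref{Delta_additive_twist_pole_location_lemma}, handle each non-trivial $\chi\pmod{q}$ by identifying the twist with an untwisted object attached to $f\otimes\chi$ and invoking \autoref{Delta_additive_twist_pole_location_lemma} again, and finally isolate the $R_{\ast,q}(q^{-s})\Lambda_\ast(s)$ contributions from the trivial-character piece via \eqref{D_trivial_char_eq}. The only cosmetic difference is that for non-trivial $\chi$ you state the clean identity $\Delta_{f,a,p}(s,\chi)=\Delta_{(f\otimes\chi),a,p}(s)$ and apply \autoref{Delta_additive_twist_pole_location_lemma} once, whereas the paper first expands via \eqref{Delta_additive_twist_decomposition_eq} into the building blocks $\Delta_{f\otimes\chi}(s)$ and $\Delta_{f\otimes\psi\chi}(s)$ and applies the lemma to each; the two are equivalent.
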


\begin{proof}
    By \eqref{additive_char_expansion_eq} we have
    \begin{equation*}
        \Delta_{f, a, p}\left(s, \frac{b}{q}\right) = \frac{q-1}{\phi(q)}\Delta_{f, a, p}(s) + \frac{q}{\phi(q)}\tau(\chi_0) \Delta_{f, a, p}(s, \chi_0) + \frac{1}{\phi(q)}\sum_{\substack{\chi \Mod{q} \\ \chi \not= \chi_0}} \tau\left(\overline{\chi}\right) \chi(b) \Delta_{f, a, p}(s, \chi).
    \end{equation*}
    For non-trivial $\chi \pmod{q}$, we can twist \eqref{Delta_additive_twist_decomposition_eq} by $\chi$ to get
    \begin{equation*}
        \begin{split}
            \Delta_{f, a, p}(s, \chi) &= \left(\frac{p-1}{\phi(p)} + \frac{p}{\phi(p)}\tau(\psi_0)P_{f, p}(p^{-s}\chi(p)) \right) \Delta_f(s, \chi) + \frac{1}{\phi(p)}\sum_{\substack{\psi \Mod{p} \\ \psi \not= \psi_0}} \tau\left(\overline{\psi}\right) \psi(a) \Delta_f(s, \psi \chi) \\
            &= \left(\frac{p-1}{\phi(p)} + \frac{p}{\phi(p)}\tau(\psi_0)P_{f\otimes\chi, p}(p^{-s}) \right) \Delta_{f\otimes\chi}(s) + \frac{1}{\phi(p)}\sum_{\substack{\psi \Mod{p} \\ \psi \not= \psi_0}} \tau\left(\overline{\psi}\right) \psi(a) \Delta_{f\otimes\psi\chi}(s),
        \end{split}
    \end{equation*}
    as $\chi\pmod{q}$ and $\psi\chi \pmod{pq}$ are primitive characters. This shows that $\Delta_{f, a, p}(s, \chi)$ is holomorphic in $\{s \in \C : \Re(s) \leq 0\} \setminus \frac{1}{2}\Z$, since this is the case for each of $\Delta_{f\otimes\chi}(s)$ and $\Delta_{f\otimes \psi\chi}(s)$ due to \autoref{Delta_additive_twist_pole_location_lemma}. The same property also holds for $\Delta_{f, a, p}(s)$ by the same lemma, so we are left with analyzing $\Delta_{f, a, p}(s, \chi_0)$. Since $\chi_0(p)=1$, once again by \eqref{Delta_additive_twist_decomposition_eq} we have
    \begin{equation*}
        \Delta_{f, a, p}(s, \chi_0) = \left(\frac{p-1}{\phi(p)} + \frac{p}{\phi(p)}\tau(\psi_0)P_{f, p}(p^{-s}) \right) \Delta_f(s, \chi_0) + \frac{1}{\phi(p)}\sum_{\substack{\psi \Mod{p} \\ \psi \not= \psi_0}} \tau\left(\overline{\psi}\right) \psi(a) \Delta_{f\otimes\psi}(s, \chi_0).
    \end{equation*}
    Now, \eqref{D_trivial_char_eq} gives
    \begin{equation*}
        \Delta_f(s, \chi_0) = P_{f, q}(q^{-s}) \Delta_f(s) - \frac{\phi(q)}{q} R_{f, q}(q^{-s}) \Lambda_f(s),
    \end{equation*}
    and analogously, since for non-trivial $\psi \pmod{p}$ the primitive form $f\otimes \psi$ has level $Np^2$ and $q \in Q(Np^2)$, 
    \begin{equation*}
        \Delta_{f\otimes \psi}(s, \chi_0) = P_{f\otimes \psi, q}(q^{-s}) \Delta_{f\otimes \psi}(s) - \frac{\phi(q)}{q} R_{f\otimes \psi, q}(q^{-s}) \Lambda_{f\otimes \psi}(s).
    \end{equation*}
    However, $\Delta_f(s)$ and $\Delta_{f\otimes\psi}(s)$ are both holomorphic in $\{s \in \C : \Re(s) \leq 0\} \setminus \frac{1}{2}\Z$, so the only remaining terms are the ones with the factors $R_{f, q}(q^{-s})$ and $R_{f\otimes \psi, q}(q^{-s})$. Plugging those back along our sequence of equations, we obtain the desired result.
    
\end{proof}

\subsection{Producing poles}

We are ready for the proof of the main result in this section. The first step is to show that the lack of poles for $G_{f, p}$ leads to a paradoxical analytic continuation for some of its additive twists. We compartmentalize this claim in the next lemma, which very closely follows the argument of \cite{Bo16}.

\begin{lemma}[No poles for $G_{f, p}$ implies continuation of additive twists]\label{continuation_of_additive_twists_lemma}
    Let $p \equiv 1 \Mod{N}$ be prime, and assume that $G_{f, p}(s)$ has no poles in $\Re(s) \in (0, 1)$. Then for any prime $q \nmid Np$,
    \begin{equation}\label{final_key_function_eq}
        \Delta_{f, 1, p}\left(s, \frac{1}{q}\right) -  \Delta_{f, -\overline{N}, p}\left(s, \frac{1}{q}\right)
    \end{equation}
    continues to a holomorphic function in $\C \setminus \frac{1}{2}\Z$.
\end{lemma}

\begin{proof}
    If $G_{f, p}(s)$ has no poles in $\Re(s) \in (0, 1)$, then by \autoref{Delta_additive_twist_pole_location_lemma} the only possible pole of $G_{f, p}(s)$ with $\Re(s) > -1/2$ is $s=0$, which can only occur if $k=1$.

    Let $0 < \eta < 1/2$. For $z \in \H$, define 
    \begin{equation*}
        \mathcal{I}^R(z) := \frac{1}{2\pi i } \int\limits_{\Re(s) = 1 + \eta} G_{f, p}(s)(-iz)^{-s - \frac{k-1}{2}} \dd s
    \end{equation*}
    and
    \begin{equation*}
        \mathcal{I}^L(z) := \frac{1}{2\pi i } \int\limits_{\Re(s) = -\eta} G_{f, p}(s)(-iz)^{-s - \frac{k-1}{2}} \dd s.
    \end{equation*}
    By Stirling's formula, the decomposition \eqref{Delta_additive_twist_decomposition_eq}, and the Phragm{\'e}n–Lindel{\"o}f principle, we see that $G_{f, p}(s)$ is rapidly decaying in vertical strips, so we can shift contours. Since we are assuming that $G_{f, p}(s)$ has no poles in $\Re(s) \in (0, 1)$, and it has a pole at $s=0$ only if $k=1$, we get
    \begin{equation}\label{G_contour_shift_eq}
        \mathcal{I}^L(z) + \Res_{s=0} G_{f, p}(s) = \mathcal{I}^R(z).
    \end{equation}
    
    Observe that
    \begin{equation*}
        \begin{split}
            \mathcal{I}^R(z) &= \frac{1}{2\pi i } \int\limits_{\Re(s) = 1 + \eta} \left( \Delta_{f, 1, p}(s) - \Delta_{f, -\overline{N}, p}(s) \right) (-iz)^{-s - \frac{k-1}{2}} \dd s \\
            &= I^R_{f, 1, p}(z) - I^R_{f, -\overline{N}, p}(z)
        \end{split}
    \end{equation*}
    in the notation of \autoref{mellin_computation_lemma}. Similarly, we have
    \begin{equation*}
        \begin{split}
            \mathcal{I}^L(z) + \Res_{s=0}G_{f, p}(s) =\ & \frac{1}{2\pi i } \int\limits_{\Re(s) = - \eta} \left( \Delta_{f, 1, p}(s) - \Delta_{f, -\overline{N}, p}(s) \right) (-iz)^{-s - \frac{k-1}{2}} \dd s + \Res_{s=0}G_{f, p}(s) \\
            =\ & \left(I^L_{f, 1, p}(z) + \Res_{s=0} \Delta_{f, 1, p}(s)\right) - \left(I^L_{f, -\overline{N}, p}(z) + \Res_{s=0} \Delta_{f, -\overline{N}, p}(s)\right).
        \end{split}
    \end{equation*}
    Therefore, \eqref{G_contour_shift_eq} becomes
    \begin{equation}\label{pre_mellin_funct_eq}
         \left(I^L_{f, 1, p}(z) + \Res_{s=0} \Delta_{f, 1, p}(s)\right) - \left(I^L_{f, -\overline{N}, p}(z) + \Res_{s=0} \Delta_{f, -\overline{N}, p}(s)\right) = I^R_{f, 1, p}(z) - I^R_{f, -\overline{N}, p}(z).
    \end{equation}
    
    We now set $z = \alpha + iy$, with $\alpha \in \Q^\times$ and $y > 0$, and perform a truncated Mellin transform along $y$. More precisely, consider
    \begin{equation*}
        \mathcal{R}(s):= \int_0^\frac{|\alpha|}{4} \left(I^R_{f, 1, p}(\alpha+iy) - I^R_{f, -\overline{N}, p}(\alpha+iy)\right) y^{s+\frac{k-1}{2}}\frac{dy}{y}.
    \end{equation*}
    Applying \autoref{analytic_continuation_lemma} we conclude that
    \begin{equation}\label{final_trunc_mellin_LHS_eq}
        \mathcal{R}(s) - \left(\Delta_{f, 1, p}(s, \alpha) - \Delta_{f, -\overline{N}, p}\left(s, \alpha\right)\right)
    \end{equation}
    continues to an entire function of $s$. Similarly, let
    \begin{equation*}
        \mathcal{L}(s) :=\int_0^\frac{|\alpha|}{4} \left(\left(I^L_{f, 1, p}(\alpha+iy) + \Res_{s=0} \Delta_{f, 1, p}(s)\right) - \left(I^L_{f, -\overline{N}, p}(\alpha+iy) + \Res_{s=0} \Delta_{f, -\overline{N}, p}(s)\right)\right) y^{s+\frac{k-1}{2}}\frac{dy}{y}.
    \end{equation*}
    By \autoref{analytic_continuation_lemma} we conclude that, for any $M \in \Z_{\geq 0}$,
    \begin{equation}\label{final_trunc_mellin_RHS_eq}
        \begin{split}
            \mathcal{L}(s) - \rn_f (i \sgn(\alpha))^k (Np^2\alpha^2)^{s-\frac{1}{2}}  \sum_{m=0}^{M-1}  (iNp^2\alpha)^{m} \binom{s+m-\frac{k+1}{2}}{m} \\
            \times \left(\Delta_{\overline{f}, -\overline{N}, p}\left(s+m, -\frac{1}{Np^2\alpha}\right) - \Delta_{\overline{f}, 1, p}\left(s+m, -\frac{1}{Np^2\alpha}\right) \right)
        \end{split}
    \end{equation}
    continues to a meromorphic function in $\{s \in \C : \Re(s) > 1-M\}$ whose poles in that region can only be at $s = \frac{1-k}{2} - n$ for $n \in \Z_{\geq 0}$. Here we used the fact that $\xi(p) = 1$, as $p \equiv 1 \pmod{N}$.
    
    Since $\mathcal{L}(s) = \mathcal{R}(s)$ due to \eqref{pre_mellin_funct_eq}, we conclude from \eqref{final_trunc_mellin_LHS_eq} and \eqref{final_trunc_mellin_RHS_eq} that
    \begin{equation}\label{final_general_delta_relation_eq}
        \begin{split}
            &\Delta_{f, 1, p}(s, \alpha) - \Delta_{f, -\overline{N}, p}(s, \alpha) - \rn_f (i \sgn(\alpha))^k (Np^2\alpha^2)^{s-\frac{1}{2}} \sum_{m=0}^{M-1} (iNp^2\alpha)^{m} \\
            \times \ & \binom{s+m-\frac{k+1}{2}}{m} \left(\Delta_{\overline{f}, -\overline{N}, p}\left(s+m, -\frac{1}{Np^2\alpha}\right) - \Delta_{\overline{f}, 1, p}\left(s+m, -\frac{1}{Np^2\alpha}\right) \right)
        \end{split}
    \end{equation}
    continues to a holomorphic function in $\{s \in \C : \Re(s) > 1-M\} \setminus \frac{1}{2}\Z$.
    
    Fix $b \in (\Z \slash Np^2\Z)^\times$. Let $q_1, q_2, \dots, q_M$ be distinct primes satisfying $q_j \equiv b \pmod{Np^2}$ for all $1\leq j\leq M$, and let $m_0$ be an integer satisfying $0 \leq m_0 \leq M-1$. Setting $\alpha = \frac{1}{q_j}$, \eqref{final_general_delta_relation_eq} shows that
    \begin{equation}\label{intermediate_delta_relation_eq}
        \begin{split}
            &\left(\frac{Np^2}{q_j^2}\right)^{\frac{1}{2}-s} \left(\Delta_{f, 1, p}\left(s, \frac{1}{q_j}\right) -  \Delta_{f, -\overline{N}, p}\left(s, \frac{1}{q_j}\right)\right) - \rn_f i^k \sum_{m=0}^{M-1} \left(\frac{iNp^2}{q_j}\right)^{m} \\
            \times \ & \binom{s+m-\frac{k+1}{2}}{m} \left(\Delta_{\overline{f}, -\overline{N}, p}\left(s+m, -\frac{b}{Np^2}\right) - \Delta_{\overline{f}, 1, p}\left(s+m, -\frac{b}{Np^2}\right) \right)
        \end{split}
    \end{equation}
    continues to a holomorphic function in $\{s \in \C : \Re(s) > 1-M\} \setminus \frac{1}{2}\Z$. By the Vandermonde determinant, we can find $c_1, c_2, \dots, c_M \in \Q$ such that for every $m\in \Z$ with $0\leq m\leq M-1$,
    \begin{equation*}
        \sum_{j=1}^M c_j q_j^{-m} = 
        \begin{cases}
            1 & \text{if } m = m_0, \\
            0 & \text{if } m \not = m_0.
        \end{cases}
    \end{equation*}
    Summing \eqref{intermediate_delta_relation_eq} for each $q_j$ with weight $c_j$, for $1\leq j\leq M$, it follows that
    \begin{equation}\label{final_specific_delta_relation_eq}
        \begin{split}
            &\rn_f i^k (iNp^2)^{m_0} \binom{s+m_0 -\frac{k+1}{2}}{m_0} \left(\Delta_{\overline{f}, -\overline{N}, p}\left(s+m_0, -\frac{b}{Np^2}\right) - \Delta_{\overline{f}, 1, p}\left(s+m_0, -\frac{b}{Np^2}\right) \right) \\
            - & \sum_{j=1}^M c_j \left(\frac{Np^2}{q_j^2}\right)^{\frac{1}{2}-s} \left(\Delta_{f, 1, p}\left(s, \frac{1}{q_j}\right) -  \Delta_{f, -\overline{N}, p}\left(s, \frac{1}{q_j}\right)\right)
        \end{split}
    \end{equation}
    continues to a holomorphic function in $\{s \in \C : \Re(s) > 1-M\} \setminus \frac{1}{2}\Z$.

    Now, observe that both $\Delta_{f, 1, p}\left(s, \frac{1}{q_j}\right)$ and  $\Delta_{f, -\overline{N}, p}\left(s, \frac{1}{q_j}\right)$ are holomorphic in $\{s \in \C : \Re(s) < 0\} \setminus \frac{1}{2}\Z$. Indeed, this follows from \autoref{Delta_hardcore_pole_location_lemma} and the fact that for a non-trivial character $\psi \pmod{p}$, the poles of $R_{f, q_j}(q_j^{-s})$ and $R_{f\otimes\psi, q_j}(q_j^{-s})$ satisfy $\Re(s) = 0$, since $\lambda_{f\otimes \psi}(q_j) = \lambda_f(q_j) \psi(q_j)$ and $|\lambda_f(q_j)| \leq 2$ by Deligne's bound. Therefore, \eqref{final_specific_delta_relation_eq} implies that
    \begin{equation*}
        \Delta_{\overline{f}, -\overline{N}, p}\left(s, -\frac{b}{Np^2}\right) - \Delta_{\overline{f}, 1, p}\left(s, -\frac{b}{Np^2}\right)
    \end{equation*}
    continues to a holomorphic function in $\{s \in \C : 1-M + m_0 < \Re(s) < m_0 \}\setminus \frac{1}{2}\Z$. Since $M \in \Z_{\geq 0}$ and $0 \leq m_0 \leq M-1$ are arbitrary, we conclude that it indeed continues to a holomorphic function in $\C \setminus \frac{1}{2}\Z$. Finally, this in conjunction with \eqref{intermediate_delta_relation_eq} shows that
    the desired function \eqref{final_key_function_eq} continues to a holomorphic function in $\C \setminus \frac{1}{2}\Z$, for any prime $q \equiv b \pmod{Np^2}$. Since we can choose the congruence class $b \in (\Z\slash Np^2 \Z)^\times$ arbitrarily, the result holds for any prime $q \nmid Np$.
    
\end{proof}

To finish, we produce poles for some choice of twists from the previous lemma. Some extra care must be taken compared with \cite{Bo16}, since instead of $\Delta_f$ we have a linear combination $G_{f, p}$ of such terms. Fortunately at this point we can rule out complete cancellation of the poles due to their explicit nature, arising from simple zeros of local factors.

\begin{proof}[Proof of \autoref{pole_existence_prop}]
    Assume by contradiction that $G_{f, p}(s)$ has no poles in $\Re(s) \in (0, 1)$. Then \autoref{continuation_of_additive_twists_lemma} shows that for each prime $q\nmid Np$,  \eqref{final_key_function_eq} continues to a holomorphic function in $\C \setminus \frac{1}{2}\Z$.
    
    Let $\chi_0 \pmod{q}$ and $\psi_0 \pmod{p}$ denote the trivial characters, and observe that $\tau(\chi_0) = \tau(\psi_0) = -1$. Applying \autoref{Delta_hardcore_pole_location_lemma} to each term of \eqref{final_key_function_eq} we verify that
    \begin{equation}\label{final_key_function_with_poles_eq}
            \frac{1}{p-1} \sum_{\substack{\psi \Mod{p} \\ \psi \not= \psi_0}} \tau\left(\overline{\psi}\right) \left(\psi(1) - \psi\left( -\overline{N}\right)\right) R_{f\otimes\psi, q}(q^{-s}) \Lambda_{f\otimes\psi}(s)
    \end{equation}
    continues to a holomorphic function in $\{s\in\C : \Re(s) \leq 0\} \setminus \frac{1}{2}\Z$, so in particular it has no poles $s \not= 0$ with $\Re(s) = 0$.
    
    Observe that for any $c \in (\Z \slash p \Z)^\times$ we have
    \begin{equation}\label{PNT_in_AP_eq}
        \sum_{\substack{r\leq x \text{ prime} \\ r \equiv c \Mod{p}}} |\lambda_f(r)|^2 \sim \frac{1}{\phi(p)} \frac{x}{\log{x}} \quad \quad \text{as } x \to \infty
    \end{equation}
    by Rankin-Selberg (see for instance \cite[Lemma 1]{LM04} for details when $f$ has trivial nebentypus), as $f \otimes \psi$ is orthogonal to $f$ for each non-trivial $\psi \pmod{p}$, since it is a primitive form of level $Np^2$. From now on we assume that $q \in \mathcal{Q}_{f, p} := \left\{ r \text{ prime} : r \equiv 1 \pmod{p}, r \nmid N, \text{ and }|\lambda_f(r)| < 2\right\}$. Observe that $\mathcal{Q}_{f, p}$ is an infinite set, by \eqref{PNT_in_AP_eq}. 

    Since $q \equiv 1 \pmod{p}$, for any non-trivial $\psi\pmod{p}$ we have $R_{f\otimes\psi, q}(q^{-s}) = R_{f, q}(q^{-s}\psi(q)) = R_{f, q}(q^{-s})$. But
    \begin{equation}\label{R_and_P_relation_eq}
        R_{f, q}(q^{-s}) = -\frac{q}{\phi(q)} P_{f, q}(q^{-s}) \left(\frac{\left(P_{f, q}(q^{-s})\right)'}{P_{f, q}(q^{-s})}\right)',
    \end{equation}
    where the derivatives are with respect to $s$, so the poles of $R_{f, q}(q^{-s})$ are precisely at the simple zeros of $P_{f, q}(q^{-s}) = 1 - \lambda_f(q)q^{-s} +\xi(q) q^{-2s} =: (1-\alpha_{f}(q)q^{-s})(1-\beta_f(q) q^{-s})$. We chose $q$ with $|\lambda_f(q)| < 2$, so $|\alpha_f(q)| = |\beta_f(q)| = 1$ and $\alpha_f(q) \not= \beta_f(q)$. Therefore, all the zeros of $P_{f, q}(q^{-s})$ are simple and satisfy $\Re(s) = 0$.
    
    Choose $t \in \R^\times$ such that $q^{it} = \alpha_f(q)$, so $P_{f, q}(q^{-it}) = 0$ and \eqref{R_and_P_relation_eq} gives 
    \begin{equation*}
        \begin{split}
            \Res_{s=it} R_{f, q}(q^{-s}) &= \left.\frac{q}{\phi(q)} (P_{f, q}(q^{-s}))' \right\vert_{s=it} = \frac{q^{1-it}  \log{q}}{q-1} \left(\lambda_f(q) - 2 \xi(q) q^{-it}\right) \\
            &= \frac{q  \log{q}}{q-1} \overline{\alpha_f(q)} \left(\alpha_f(q) - \beta_f(q)\right) \not= 0, 
        \end{split}
    \end{equation*}
    as $\alpha_f(q) \beta_f(q) = \xi(q)$. We now take residues of \eqref{final_key_function_with_poles_eq} at $s=it \not=0$ to obtain 
    \begin{equation*}
        \frac{1}{p-1} \sum_{\substack{\psi \Mod{p} \\ \psi \not= \psi_0}} \tau\left(\overline{\psi}\right) \left(\psi(1) - \psi\left( -\overline{N}\right)\right) \Lambda_{f\otimes\psi}(it) \cdot \Res_{s=it} R_{f, q}(q^{-s}) = 0.
    \end{equation*}
    But $\Res_{s=it} R_{f, q}(q^{-s}) \not= 0$ as we saw above, so in fact using \eqref{lambda_additive_twist_char_expansion_eq} we get
    \begin{equation}
        0 = \frac{1}{p-1} \sum_{\substack{\psi \Mod{p} \\ \psi \not= \psi_0}} \tau\left(\overline{\psi}\right) \left(\psi(1) - \psi\left( -\overline{N}\right)\right) \Lambda_{f\otimes\psi}(it) = \Lambda_f\left(it, \frac{1}{p}\right) - \Lambda_f\left(it, -\frac{\overline{N}}{p}\right)
    \end{equation}
    for any $t \in \mathcal{T}_{f, q} := \left\{\frac{\theta_f(q) + 2\pi n}{\log{q}} : n\in \Z\right\} \setminus \{0\}$, where $\theta_f(q) \in [0, 2\pi)$ is defined by $\alpha_f(q) = e^{i\theta_f(q)}$. Since this holds for any $q \in \mathcal{Q}_{f, p}$ and $\bigcup_{q \in \mathcal{Q}_{f, p}} \mathcal{T}_{f, q}$ is dense in $\R$ (as $\mathcal{Q}_{f, p}$ is infinite), we conclude by analytic continuation that
    \begin{equation*}
        \Lambda_f\left(s, \frac{1}{p}\right) = \Lambda_f\left(s, -\frac{\overline{N}}{p}\right)
    \end{equation*}
    for every $s \in \C$. This is a contradiction, as we can compare the coefficients of the respective Dirichlet series expansions in $\Re(s) > 1$ and they do not match. For instance, \eqref{PNT_in_AP_eq} shows that there is a prime $r \equiv 1 \pmod{p}$ such that $\lambda_f(r) \not=0$, hence the $r$-th coefficients in the corresponding Dirichlet series expansions are $\lambda_f(r) e\left(1/p\right)$ and $\lambda_f(r) e\left(-\overline{N}/p\right)$, which are distinct since $-\overline{N} \not\equiv 1\pmod{p}$, as $p > N+1$  by hypothesis. A standard argument using Perron's formula then gives the desired contradiction, so we conclude that $G_{f, p}(s)$ has at least one pole in $\Re(s) \in (0, 1)$, as desired.
    
\end{proof}


\section{Location of poles of \texorpdfstring{$H_{f, \alpha}$}{H {f, alpha}}}

In this section we will show that if $H_{f, \alpha}(s)$ has a pole in $\Re(s) \in \left[\frac{1}{2}, 1\right)$ for some $\alpha \in \Q^\times$, then $\Lambda_f$ must have many simple zeros. This will be enough to prove our main results, since \autoref{H_large_pole_existence_prop} guarantees the existence of such a pole for some $\alpha$ in the case of either $f$ or $\overline{f}$, but $\Lambda_f$ and $\Lambda_{\overline{f}}$ have the same number of simple zeros, by the functional equation.

\subsection{From poles of \texorpdfstring{$H_{f, \alpha}$}{H {f, alpha}} to simple zeros of \texorpdfstring{$\Lambda_f$}{Lambda f}}

Denote $S_f(z) := S_{f, 1, 1}(z)$ for $z\in\H$, as in the introduction. As we have described before, the basic mechanism uses \eqref{S_abs_val_condition_eq} to show that $S_f$ cannot be always small if $H_{f, \alpha}$ has a pole of large real part. The next lemma provides a more direct link between the quantity in \eqref{S_abs_val_condition_eq} and simple zeros of $\Lambda_f$ (i.e.\ poles of $\Delta_f$ in the critical strip). It is essentially contained in \cite[Lemma 3.2]{BMN19}, but we provide a proof for completeness.

\begin{lemma}[Bounding the truncated Mellin transform of $S_f$]\label{S_bound_lemma}
    Let $\eta > 0$ be fixed. For any $\sigma \in [\eta, 2]$ and $\alpha \in \Q^\times$,
    \begin{equation*}
        \int_0^\frac{|\alpha|}{4} |S_f(\alpha + iy)| y^{\sigma + \frac{k-1}{2}}\frac{dy}{y} \ll_{f, \alpha, \eta} \sum_{\substack{\rho = \beta + i\gamma \\ \text{ a pole of } \Delta_f \\ \text{with }\beta >0}} \left|\Lambda_f'(\rho)\right| e^\frac{\pi |\gamma|}{2}(1+|\gamma|)^{-\sigma-\frac{k-1}{2}}.
    \end{equation*}
\end{lemma}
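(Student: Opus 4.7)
The plan is to start by computing $S_f(z)$ explicitly. Since $L_f(s) = L_f'(\rho)(s-\rho) + O((s-\rho)^2)$ at each simple zero $\rho$, inserting into \eqref{D_def_eq} gives $\Res_{s=\rho} D_f(s) = -L_f'(\rho)$, so $\Res_{s=\rho}\Delta_f(s) = -\Lambda_f'(\rho)$ for any simple zero $\rho$ in the critical strip. Hence
\begin{equation*}
    S_f(z) = -\sum_{\substack{\rho = \beta+i\gamma \\ \beta \in (0, 1)}} \Lambda_f'(\rho)\,(-iz)^{-\rho-(k-1)/2}.
\end{equation*}
Substituting into the LHS of the lemma, applying the triangle inequality, and swapping sum with integral (valid by Tonelli, as all integrands are non-negative), I reduce the problem to bounding, for each pole $\rho$,
\begin{equation*}
    J(\rho) := \int_0^{|\alpha|/4} \left| (-i(\alpha+iy))^{-\rho-(k-1)/2} \right| y^{\sigma+(k-1)/2}\,\frac{dy}{y}.
\end{equation*}

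To estimate $J(\rho)$, I would use the principal branch to compute $\log(-i(\alpha+iy)) = \frac{1}{2}\log(y^2+\alpha^2) - i\sgn(\alpha)\arctan(|\alpha|/y)$, giving
\begin{equation*}
    \left|(-i(\alpha+iy))^{-\rho-(k-1)/2}\right| = (y^2+\alpha^2)^{-(\beta+(k-1)/2)/2}\exp\left(\gamma\sgn(\alpha)\arctan(|\alpha|/y)\right).
\end{equation*}
For $y \in (0, |\alpha|/4]$ the algebraic prefactor is $\asymp |\alpha|^{-\beta-(k-1)/2}$, absorbed into $\ll_{f,\alpha,\eta}$. The crucial step is the analysis of the exponential factor via the identity $\arctan(|\alpha|/y) = \pi/2 - \arctan(y/|\alpha|)$: in the unfavorable case $\gamma\sgn(\alpha) \geq 0$,
\begin{equation*}
    \exp\left(\gamma\sgn(\alpha)\arctan(|\alpha|/y)\right) = e^{|\gamma|\pi/2}\exp(-|\gamma|\arctan(y/|\alpha|)) \leq e^{|\gamma|\pi/2}e^{-c|\gamma|y/|\alpha|}
\end{equation*}
for an absolute constant $c > 0$ (using $\arctan(t) \gg t$ for $t \in [0, 1/4]$), and the opposite case yields a bound trivially majorized by the same expression.

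Inserting this into $J(\rho)$ and making the substitution $t = c|\gamma|y/|\alpha|$ produces
\begin{equation*}
    J(\rho) \ll_{f,\alpha} e^{|\gamma|\pi/2}|\gamma|^{-\sigma-(k-1)/2}\int_0^{c|\gamma|/4} e^{-t}t^{\sigma+(k-1)/2-1}\,dt \ll_{f,\alpha,\eta} e^{|\gamma|\pi/2}|\gamma|^{-\sigma-(k-1)/2},
\end{equation*}
where the truncated Gamma integral is bounded by $\Gamma(\sigma+(k-1)/2)$, uniformly in $\sigma \in [\eta, 2]$. For small $|\gamma|$ the trivial bound $J(\rho) \ll_{f,\alpha,\eta} 1$ already suffices, and combining the two regimes gives $J(\rho) \ll_{f,\alpha,\eta} e^{|\gamma|\pi/2}(1+|\gamma|)^{-\sigma-(k-1)/2}$. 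Summing over $\rho$ then completes the proof.

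The main obstacle will be the exponential analysis: the naive bound $\exp(\gamma\sgn(\alpha)\arctan(|\alpha|/y)) \leq e^{|\gamma|\pi/2}$ is insufficient, because the required polynomial decay $(1+|\gamma|)^{-\sigma-(k-1)/2}$ in $|\gamma|$ must be extracted from the integration itself. It emerges only upon observing that the deficit $\pi/2 - \arctan(|\alpha|/y) = \arctan(y/|\alpha|) \asymp y/|\alpha|$ contributes an extra localizing factor $e^{-c|\gamma|y/|\alpha|}$, which after the substitution $t = c|\gamma|y/|\alpha|$ concentrates the integrand onto a window of width $\asymp |\alpha|/|\gamma|$ near $y = 0$ and produces the needed decay in $|\gamma|$.
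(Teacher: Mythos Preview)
Your proposal is correct and follows essentially the same approach as the paper: compute $\Res_{s=\rho}\Delta_f(s)=-\Lambda_f'(\rho)$, apply the triangle inequality and Tonelli, then use $\arctan(|\alpha|/y)=\pi/2-\arctan(y/|\alpha|)$ together with $\arctan(t)\gg t$ on $[0,1/4]$ to extract $e^{\pi|\gamma|/2}e^{-c|\gamma|y/|\alpha|}$ and rescale into a Gamma integral. Two cosmetic remarks: your exponential factor has a harmless sign slip (it should be $\exp(-\gamma\sgn(\alpha)\arctan(|\alpha|/y))$, which only swaps which sign of $\gamma\sgn(\alpha)$ is ``unfavorable''), and the paper avoids your separate small-$|\gamma|$ case by replacing $|\gamma|$ with $\tau=1+|\gamma|$ in the exponential before extending the integral to $[0,\infty)$.
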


\begin{proof}
    We have
    \begin{equation}\label{integral_abs_val_bound_eq}
        \int_0^\frac{|\alpha|}{4} |S_f(\alpha + iy)| y^{\sigma + \frac{k-1}{2}}\frac{dy}{y} \leq \sum_{\substack{\Re(\rho) \in (0, 1)}} \int_0^\frac{|\alpha|}{4} \left|\Res_{s=\rho}\Delta_f(s)\right|\cdot\left|\left(y-i\alpha\right)^{-\rho-\frac{k-1}{2}}\right| y^{\sigma + \frac{k-1}{2}}\frac{dy}{y},
    \end{equation}
    where we can exchange the order of summation and integration by Tonelli's theorem. Let $\rho = \beta + i\gamma$ be a pole of $\Delta_f$ with $\beta \in (0, 1)$, and denote $\tau := 1+|\gamma|$. Then since $\Lambda_f(\rho) = 0$, observe that
    \begin{equation*}
        \Res_{s=\rho}\Delta_f(s) = -\Gamma_\C\left(\rho + \frac{k-1}{2}\right) L_f'(\rho) = -\Lambda_f'(\rho),
    \end{equation*}
    and for $0 \leq y \leq \frac{|\alpha|}{4}$,
    \begin{equation*}
        \left|\left(y-i\alpha\right)^{-\rho - \frac{k-1}{2}} \right| = |y-i\alpha|^{-\beta-\frac{k-1}{2}} e^{\gamma \arctan\left(-\frac{\alpha}{y}\right)} \ll_{f, \alpha} e^{-\gamma \arctan\left(\frac{\alpha}{y}\right)} = e^{\gamma \sgn(\alpha) \left(\arctan\left(\frac{y}{|\alpha|}\right) - \frac{\pi}{2} \right)}.
    \end{equation*}
    Therefore, we conclude that the RHS of \eqref{integral_abs_val_bound_eq} is
    \begin{equation*}
        \ll_{f, \alpha} \sum_{\substack{\rho = \beta + i\gamma \\ \text{ a pole of } \Delta_f \\ \text{with }\beta \in (0, 1)}} \left|\Lambda_f'(\rho)\right| \cdot \int_0^\frac{|\alpha|}{4}  e^{\gamma \sgn(\alpha) \left(\arctan\left(\frac{y}{|\alpha|}\right)-\frac{\pi}{2}\right)} y^{\sigma + \frac{k-1}{2}}\frac{dy}{y}.
    \end{equation*}
    Using $\gamma \sgn(\alpha) \left(\arctan\left(\frac{y}{|\alpha|}\right)-\frac{\pi}{2}\right) \leq -|\gamma|\left(\arctan\left(\frac{y}{|\alpha|}\right) - \frac{\pi}{2}\right) \leq -\frac{|\gamma|y}{2|\alpha|} + \frac{\pi|\gamma|}{2}$, since $\arctan(x) \geq \frac{x}{2}$ for $0 \leq x\leq \frac{1}{4}$, we have
    \begin{equation*}
        \begin{split}
            \int_0^\frac{|\alpha|}{4}  e^{\gamma \sgn(\alpha) \left(\arctan\left(\frac{y}{|\alpha|}\right)-\frac{\pi}{2}\right)} y^{\sigma + \frac{k-1}{2}}\frac{dy}{y} \leq e^{\frac{\pi|\gamma|}{2}} \int_0^\frac{|\alpha|}{4}  e^{-\frac{|\gamma|y}{2|\alpha|}} y^{\sigma + \frac{k-1}{2}}\frac{dy}{y} \ll e^{\frac{\pi|\gamma|}{2}} \int_0^\frac{|\alpha|}{4}  e^{-\frac{\tau y}{2|\alpha|}} y^{\sigma + \frac{k-1}{2}}\frac{dy}{y} \\
            \leq e^{\frac{\pi|\gamma|}{2}} \int_0^\infty  e^{-\frac{\tau y}{2|\alpha|}} y^{\sigma + \frac{k-1}{2}}\frac{dy}{y} = e^{\frac{\pi|\gamma|}{2}} \left(\frac{2|\alpha|}{\tau}\right)^{\sigma + \frac{k-1}{2}}\Gamma\left(\sigma + \frac{k-1}{2}\right) \ll_{f, \alpha, \eta} e^{\frac{\pi|\gamma|}{2}} \tau^{-\sigma-\frac{k-1}{2}},
        \end{split}
    \end{equation*}
    so the desired result follows.
    
\end{proof}

For the case of general level, we will apply \autoref{S_bound_lemma} in conjunction with a pointwise bound coming from subconvexity (we will see how to improve this for $N=1$ in the next section).

\begin{lemma}[Weyl subconvexity for $L'_f$ {\cite[Lemma 3.1]{BMN19}}]\label{weyl_subconvexity_lemma}
    If $\rho = \beta+i\gamma$ is a zero of $\Lambda_f$, then
    \begin{equation*}
        \Lambda'_f(\rho) \ll_{f, \eps} (1+|\gamma|)^{\frac{k}{2} + \frac{1}{3}\left|\beta - \frac{1}{2} \right| - \frac{1}{6} + \eps}e^{-\frac{\pi|\gamma|}{2}}
    \end{equation*}
    for any $\eps > 0$.
\end{lemma}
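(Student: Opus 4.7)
The plan is to separate $\Lambda_f'(\rho)$ into an archimedean factor and $L_f'(\rho)$, estimate the archimedean factor by Stirling's formula, and control $L_f'(\rho)$ via Cauchy's differentiation formula fed with Weyl subconvexity on the critical line. Since the intended application in \autoref{S_bound_lemma} only concerns zeros with $\beta \in (0,1)$, we may assume $\rho$ is a non-trivial zero, so $L_f(\rho) = 0$ and hence $\Lambda_f'(\rho) = \Gamma_\C(\rho + \tfrac{k-1}{2}) L_f'(\rho)$. Stirling's formula immediately yields $|\Gamma_\C(\rho + \tfrac{k-1}{2})| \ll_k (1+|\gamma|)^{\beta + k/2 - 1} e^{-\pi |\gamma|/2}$, so it remains to bound $L_f'(\rho)$ by $(1+|\gamma|)^{\frac{2}{3}(1-\beta)+\eps}$.

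By differentiating the functional equation $\Lambda_f(s) = \rn_f N^{1/2 - s} \Lambda_{\overline{f}}(1-s)$ at $s = \rho$ and using $\Lambda_{\overline{f}}(1-\rho) = 0$, one obtains $\Lambda_f'(\rho) = -\rn_f N^{1/2 - \rho} \Lambda_{\overline{f}}'(1-\rho)$. Consequently it suffices to treat the case $\beta \ge 1/2$, since the case $\beta < 1/2$ reduces to bounding the derivative at the zero $1-\rho$ of $\Lambda_{\overline{f}}$. The key analytic input is the Weyl subconvexity bound on the critical line for $\GL(2)$ cusp forms, namely $L_f(1/2 + it) \ll_{f,\eps} (1+|t|)^{1/3 + \eps}$. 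Interpolating via Phragm\'en--Lindel\"of between this bound and the edge-of-convergence estimate $L_f(1 + it) \ll_{f,\eps} (1+|t|)^\eps$ gives
\begin{equation*}
    L_f(\sigma + it) \ll_{f, \eps} (1+|t|)^{\tfrac{2}{3}(1-\sigma) + \eps} \quad \text{for } \sigma \in [1/2, 1].
\end{equation*}

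Finally, apply Cauchy's integral formula on a circle of radius $r = 1/\log(2+|\gamma|)$ centered at $\rho$ to obtain $|L_f'(\rho)| \ll r^{-1} \max_{|w-\rho|=r} |L_f(w)|$. On this circle $\Re(w) \ge \beta - r$, so the interpolated bound yields $|L_f'(\rho)| \ll_{f,\eps} (1+|\gamma|)^{\tfrac{2}{3}(1-\beta) + \eps}$ after absorbing the harmless $r$-dependence into $\eps$ and the $r^{-1}$ factor into the $(1+|\gamma|)^\eps$. Multiplying by the Stirling estimate and simplifying the exponent,
\begin{equation*}
    \beta + \tfrac{k}{2} - 1 + \tfrac{2}{3}(1-\beta) = \tfrac{k}{2} - \tfrac{1}{6} + \tfrac{1}{3}\left(\beta - \tfrac{1}{2}\right),
\end{equation*}
which matches the claimed exponent $\tfrac{k}{2} + \tfrac{1}{3}|\beta - \tfrac{1}{2}| - \tfrac{1}{6}$ in the case $\beta \ge 1/2$. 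The main substantive ingredient is the Weyl subconvexity bound on the critical line, a deep but classical result; every other step is routine complex analysis. The shape of the exponent $-\tfrac{1}{6} = \tfrac{1}{3} - \tfrac{1}{2}$ transparently reflects the gain of $1/3$ from Weyl subconvexity over the $1/2$ from convexity.
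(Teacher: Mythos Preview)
Your proof is correct and follows exactly the approach sketched in the paper: Weyl subconvexity on the critical line, Phragm\'en--Lindel\"of interpolation, Cauchy's formula for the derivative, the functional equation to reduce to $\beta \ge \tfrac12$, and Stirling for the archimedean factor. The only point glossed over is that when $\beta$ is within $r$ of $\tfrac12$ the Cauchy circle dips slightly into $\Re(w)<\tfrac12$, where your stated interpolation does not literally apply; this is harmless, since the functional equation (which you already use) extends the bound across the critical line with the same quality, and the discrepancy is absorbed into the $\eps$.
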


\begin{proof}[Proof (sketch)]
    Follows from the Weyl subconvex bound $L_f\left(\frac{1}{2}+it\right) \ll_{f, \eps} (1+|t|)^{\frac{1}{3}+\eps}$ of \cite{Weyl19}, and a standard argument using Cauchy's formula combined with the Phragm{\'e}n–Lindel{\"o}f principle, the functional equation, and Stirling's formula. See the reference for details.
    
\end{proof}

\begin{remark}\label{subconvexity_exponent_rmk}
    If $\mu \in \left[0, \frac{1}{2}\right]$ and we had a subconvexity bound of the form $L_f\left(\frac{1}{2}+it\right) \ll_{f, \eps} (1+|t|)^{\mu + \eps}$ for all $\eps>0$, then \autoref{weyl_subconvexity_lemma} would become $\Lambda_f'(\rho) \ll_{f, \eps} (1+|\gamma|)^{\frac{k}{2} + (1-2\mu)\left(\left| \beta - \frac{1}{2}\right| - \frac{1}{2}\right) + \eps} e^{-\frac{\pi|\gamma|}{2}}$ for any $\eps > 0$. The given result corresponds to $\mu = \frac{1}{3}$.
\end{remark}

For a meromorphic function $h$ on $\{ s\in\C : \Re(s) > 1 \}$, let
\begin{equation*}
    \Theta(h) := \inf\left\{\theta \geq 0 : h \text{ continues analytically to }\{s\in\C : \Re(s) > \theta\}\right\}.
\end{equation*}
Furthermore, let
\begin{equation*}
    \begin{split}
        \theta_f :=\ & \sup\left( \{0\} \cup \left\{ \Re(\rho), 1-\Re(\rho) : \rho \text{ is a pole of } \Delta_f  \right\} \right)\\
        =\ & \sup\left( \{0\} \cup \left\{ \Re(\rho) : \rho \text{ is a simple zero of } \Lambda_f \text{ or } \Lambda_{\overline{f}} \right\} \right).
    \end{split}
\end{equation*}
Then \autoref{S_bound_lemma} and \autoref{weyl_subconvexity_lemma} can be combined into the following result, which is a particular case of \cite[Proposition 3.3]{BMN19}. We again provide the proof for completeness.

\begin{proposition}[General bound for $N_f^s$]\label{initial_zero_bound_prop}
    Let $\alpha \in \Q^\times$. If $\Theta(H_{f, \alpha}) > 0$, then $\theta_f \geq \frac{1}{2}$ and
    \begin{equation*}
        N_f^s(T) = \Omega\left(T^{\frac{1}{3}(1-\theta_f) + \Theta(H_{f, \alpha}) - \frac{1}{2} - \eps}\right)
    \end{equation*}
    for any $\eps > 0$.
\end{proposition}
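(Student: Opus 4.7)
The plan is to combine \autoref{pole_detection_prop}, \autoref{S_bound_lemma}, and \autoref{weyl_subconvexity_lemma} via Abel summation. Write $\Theta := \Theta(H_{f,\alpha})$ for brevity. I would first dispose of the claim $\theta_f \geq \tfrac{1}{2}$ by contradiction: if $\Lambda_f$ had no simple zeros in the critical strip, then $S_f \equiv 0$, so condition \eqref{S_abs_val_condition_eq} would be satisfied trivially with $\sigma = 0$, and \autoref{pole_detection_prop} (with $a = q = 1$) would force $H_{f,\alpha}$ to be holomorphic on $\Re(s) > 0$, contradicting $\Theta > 0$. Hence some simple zero $\rho = \beta + i\gamma$ of $\Lambda_f$ exists, and the functional equation $\Lambda_f(s) = \rn_f N^{\frac{1}{2}-s} \Lambda_{\overline{f}}(1-s)$ makes $1-\overline{\rho}$ a simple zero of $\Lambda_{\overline{f}}$. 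The definition of $\theta_f$ then yields both $\beta \leq \theta_f$ and $1-\beta \leq \theta_f$, giving $\theta_f \geq \tfrac{1}{2}$ together with the uniform bound $|\beta - \tfrac{1}{2}| \leq \theta_f - \tfrac{1}{2}$ over all simple zeros of $\Lambda_f$, which is what will drive the exponent in the final estimate.

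For the quantitative claim, fix $\sigma \in (0, \Theta)$. The contrapositive of \autoref{pole_detection_prop} forces
\begin{equation*}
    \int_0^{|\alpha|/4} |S_f(\alpha + iy)| \, y^{\sigma + \frac{k-1}{2}} \, \frac{dy}{y} = \infty,
\end{equation*}
since otherwise $H_{f,\alpha}$ would extend analytically to $\Re(s) > \sigma$, contradicting $\sigma < \Theta$. Plugging this into \autoref{S_bound_lemma}, bounding each $|\Lambda_f'(\rho)| e^{\pi|\gamma|/2}$ via \autoref{weyl_subconvexity_lemma}, and applying the uniform bound on $|\beta - \tfrac{1}{2}|$ established above propagates the divergence to
\begin{equation*}
    \sum_{\rho}\, (1 + |\gamma|)^{\frac{\theta_f}{3} + \frac{1}{6} - \sigma + \eps'} = \infty
\end{equation*}
for any $\eps' > 0$, where $\rho = \beta + i\gamma$ ranges over simple zeros of $\Lambda_f$ with $\beta > 0$.

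I would then convert this divergence into an $\Omega$-bound by Abel summation: if $N_f^s(T) \ll T^{\mu}$, partial summation yields $\sum_\rho (1+|\gamma|)^{-b} < \infty$ for every $b > \mu$, and comparing with the divergent sum above forces $\mu \geq \sigma - \tfrac{\theta_f}{3} - \tfrac{1}{6} - \eps'$. Letting $\sigma \nearrow \Theta$ and $\eps' \searrow 0$ and using the identity $\Theta - \tfrac{\theta_f}{3} - \tfrac{1}{6} = \tfrac{1}{3}(1-\theta_f) + \Theta - \tfrac{1}{2}$ delivers the announced bound. In the range where this target exponent is non-positive, the statement collapses to $N_f^s(T) \to \infty$, which still holds since the divergence above rules out finitely many simple zeros and zeros of the entire function $\Lambda_f$ cannot accumulate in any bounded set.

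The main obstacle is bookkeeping rather than conceptual: the precise exponent $\tfrac{1}{3}(1-\theta_f) + \Theta - \tfrac{1}{2}$ emerges only after carefully tracking the $\tfrac{1}{6}$-saving from Weyl subconvexity against both the growth factor $e^{\pi|\gamma|/2}$ produced in \autoref{S_bound_lemma} and the sharp bound on $|\beta - \tfrac{1}{2}|$ from the functional equation. Ensuring that all the $\eps$'s line up correctly in the Abel summation step is the only non-routine point, as all substantive analytic input is already encoded in \autoref{pole_detection_prop}, \autoref{S_bound_lemma}, and \autoref{weyl_subconvexity_lemma}.
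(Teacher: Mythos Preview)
Your proposal is correct and follows essentially the same approach as the paper: combine \autoref{pole_detection_prop}, \autoref{S_bound_lemma}, and \autoref{weyl_subconvexity_lemma}, use $|\beta-\tfrac12|\le\theta_f-\tfrac12$, and convert the resulting divergence into an $\Omega$-bound by partial summation. The only cosmetic differences are that the paper deduces $\theta_f\ge\tfrac12$ from the divergence of the series (hence infinitely many poles) rather than from a separate ``no zeros $\Rightarrow S_f\equiv 0$'' argument, and phrases the final step as a contradiction with $N_f^s(T)=o(T^{\cdots})$ rather than via a bound on admissible exponents $\mu$; also note that the functional equation gives $1-\rho$ (not $1-\overline{\rho}$) as a simple zero of $\Lambda_{\overline{f}}$, though this is immaterial since only real parts enter.
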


\begin{proof}
    Let $\beta_n + i \gamma_n$ run through the poles of $\Delta_f$ with $\beta_n >0$, in increasing order of $|\gamma_n|$. For $\sigma \in (0, 1]$, \autoref{S_bound_lemma} and \autoref{weyl_subconvexity_lemma} give
    \begin{equation}\label{S_integral_lemma_eq}
        \int_0^\frac{|\alpha|}{4} |S_f(\alpha+iy)| y^{\sigma + \frac{k-1}{2}}\frac{dy}{y} \ll_{f, \alpha, \sigma, \eps} \sum_{n=1}^\infty (1+|\gamma_n|)^{\frac{1}{3}\left|\beta_n-\frac{1}{2}\right| + \frac{1}{3} - \sigma + \eps}
    \end{equation}
    for any $\eps > 0$. If $\Theta(H_{f, \alpha}) > 0$, set $\sigma = \Theta(H_{f, \alpha}) -\eps$, where $0 < \eps < \Theta(H_{f, \alpha})$ is arbitrary. Then \autoref{pole_detection_prop} implies that \eqref{S_integral_lemma_eq} diverges, so in particular $\Delta_f$ has infinitely many poles $\beta_n+i\gamma_n$ with $\beta_n >0$, and therefore $\theta_f \geq \frac{1}{2}$.
    
    Now assume by contradiction that $N_f^s(T) = o\left(T^{\frac{1}{3}(1-\theta_f)+\Theta(H_{f, \alpha}) - \frac{1}{2}-3\eps}\right)$ for some $0 < \eps < \Theta(H_{f, \alpha})$. Then by \eqref{S_integral_lemma_eq}, since $\left|\beta_n - \frac{1}{2} \right|\leq \theta_f - \frac{1}{2}$, we get
    \begin{equation*}
        \begin{split}
            \infty = \int_0^\frac{|\alpha|}{4} |S_f(\alpha+iy)| y^{\sigma + \frac{k-1}{2}}\frac{dy}{y} &\ll_{f, \alpha, \eps} \sum_{n=1}^\infty (1+|\gamma_n|)^{\frac{1}{3}\theta_f+\frac{1}{6}- \Theta(H_{f, \alpha}) + 2\eps} \\
            &\ll_{f} 1 + \int_1^\infty t^{\frac{1}{3}\theta_f+\frac{1}{6}- \Theta(H_{f, \alpha}) + 2\eps} \dd N_f^s(t) \\
            &\ll_f 1 + \int_1^\infty t^{\frac{1}{3}\theta_f+\frac{1}{6} - \Theta(H_{f, \alpha})+2\eps - 1} N^s_f(t) \dd t \\
            &= 1 + \int_1^\infty o\left(t^{-1-\eps}\right) \dd t < \infty,
        \end{split}
    \end{equation*}
    which is a contradiction.
    
\end{proof}

\begin{remark}\label{subconvexity_rmk}
    Assuming a subconvexity exponent $\mu \in \left[0, \frac{1}{2}\right]$ as in \autoref{subconvexity_exponent_rmk}, the result of \autoref{initial_zero_bound_prop} becomes $N_f^s(T) = \Omega\left(T^{(1-2\mu)(1-\theta_f) + \Theta(H_{f, \alpha}) - \frac{1}{2} - \eps}\right)$ for any $\eps>0$.
\end{remark}

Observe that \autoref{initial_zero_bound_prop} fails to give a power of $T$ (even if the subconvexity exponent were to be improved) if $\theta_f = 1$ and $\Theta(H_{f, \alpha}) = \frac{1}{2}$, which cannot be ruled out with what we have done so far. However, we will use this proposition for the case of $\theta_f$ sufficiently far from $1$, where it gives a good bound.

\begin{corollary}[Main result for $\theta_f$ away from $1$]\label{small_theta_cor}
    We have $\theta_f \geq \frac{1}{2}$ and 
    \begin{equation*}
        N_f^s(T) = \Omega\left(T^{\frac{1}{3}(1-\theta_f) - \eps}\right)
    \end{equation*}
    for any $\eps > 0$.
\end{corollary}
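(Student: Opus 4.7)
The plan is to combine \autoref{H_large_pole_existence_prop} and \autoref{initial_zero_bound_prop} directly. By \autoref{H_large_pole_existence_prop} there exists $\alpha_f \in \Q^\times$ such that at least one of $H_{f, \alpha_f}$ or $H_{\overline{f}, \alpha_f}$ has a pole in $\Re(s) \in \left[\frac{1}{2}, 1\right)$. Let $g \in \{f, \overline{f}\}$ denote the corresponding choice, so that $\Theta(H_{g, \alpha_f}) \geq \frac{1}{2}$.

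Next I would apply \autoref{initial_zero_bound_prop} to $g$ instead of $f$. This yields $\theta_g \geq \frac{1}{2}$ and
\begin{equation*}
    N_g^s(T) = \Omega\!\left(T^{\frac{1}{3}(1-\theta_g) + \Theta(H_{g,\alpha_f}) - \frac{1}{2} - \eps}\right)
\end{equation*}
for any $\eps > 0$. Since $\Theta(H_{g, \alpha_f}) \geq \frac{1}{2}$, the exponent is at least $\frac{1}{3}(1-\theta_g) - \eps$.

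The final step is to translate this bound from $g$ back to $f$. This is immediate from the definition of $\theta_f$, which is symmetric in $f$ and $\overline{f}$ (it takes the supremum over simple zeros of \emph{both} $\Lambda_f$ and $\Lambda_{\overline{f}}$), so $\theta_g = \theta_f$. Moreover, the functional equation $\Lambda_f(s) = \rn_f N^{\frac{1}{2}-s} \Lambda_{\overline{f}}(1-s)$ sets up a bijection between simple zeros of $\Lambda_f$ and simple zeros of $\Lambda_{\overline{f}}$ that preserves the size of the imaginary part, giving $N_f^s(T) = N_{\overline{f}}^s(T)$. Combining these observations yields the stated bound, and the inequality $\theta_f \geq \frac{1}{2}$ also transfers.

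There is no real obstacle here: the corollary is essentially a repackaging of the two propositions, and the only mild care needed is in handling the possibility that the pole guaranteed by \autoref{H_large_pole_existence_prop} lives on the dual side $\overline{f}$, which is precisely what the symmetric definition of $\theta_f$ and the functional equation are designed to absorb.
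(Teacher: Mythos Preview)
Your proposal is correct and follows essentially the same route as the paper: use \autoref{H_large_pole_existence_prop} to get $\Theta(H_{g,\alpha_f})\ge \tfrac12$ for some $g\in\{f,\overline f\}$, apply \autoref{initial_zero_bound_prop} to $g$, and then transfer back to $f$ via $\theta_f=\theta_{\overline f}$ and $N_f^s(T)=N_{\overline f}^s(T)$. The only cosmetic difference is that you name the dual-side case explicitly as $g$, whereas the paper just says ``applying \autoref{initial_zero_bound_prop} to either $f$ or $\overline f$''.
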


\begin{proof}
    By the functional equation $\Lambda_f(s) = \rn_f N^{\frac{1}{2}-s} \Lambda_{\overline{f}}(1-s)$, we have $N_f^s(T) = N_{\overline{f}}^s(T)$ and $\theta_f = \theta_{\overline{f}}$. By \autoref{H_large_pole_existence_prop}, there is $\alpha_f \in \Q^\times$ such that $\max\left\{\Theta(H_{f, \alpha_f}), \Theta(H_{\overline{f}, \alpha_f})\right\} \geq \frac{1}{2}$. Then applying \autoref{initial_zero_bound_prop} to either $f$ or $\overline{f}$ gives the desired result.
    
\end{proof}

\subsection{Improvements for \texorpdfstring{$\theta_f$}{theta f} close to \texorpdfstring{$1$}{1}}

If $\theta_f$ is close to $1$, then either $\Delta_f$ or $\Delta_{\overline{f}}$ must have a pole $\rho$ with real part close to $1$. We will show that if for instance that is the case for $\Delta_f$, then there exists $\alpha \in \Q^\times$ such that $H_{f, \alpha}$ also has a pole at $\rho$, so \autoref{initial_zero_bound_prop} gives a much stronger result than before. The main tool for showing such a pole transference will be a certain zero density estimate, which we introduce now. 

For a primitive form $g \in S_k(\Gamma_1(M))$, $\beta \in \R$, and $T \geq 0$,  let
\begin{equation}\label{number_zero_def_eq}
    N_g(\beta, T) := \left| \{ s \in \C : \Re(s) \geq \beta, |\Im(s)| \leq T, \text{ and } L_g(s) = 0 \}\right|,
\end{equation}
where the zeros are counted with multiplicity.

\begin{lemma}[Zero density for twists close to the line $1$]\label{zero_density_lemma}
    Let $f \in S_k(\Gamma_1(N))$ be a primitive form. For each prime $p \equiv 1 \pmod{N}$, let $\psi_p \pmod{p}$ be an arbitrary non-trivial character modulo $p$. Then for any $T \geq 2$, $X \geq 2$, $\eps > 0$, and $\frac{3}{4} \leq \beta \leq 1$, we have
    \begin{equation*}
        \sum_{\substack{p \leq X \text{ prime} \\ p\equiv 1 \Mod{N}}} N_{f\otimes \psi_p}(\beta, T) \ll_{f, \eps, T} X^{4(1-\beta) + \eps} + X^{\frac{6(1-\beta)}{3\beta-1} + \eps}.
    \end{equation*}
\end{lemma}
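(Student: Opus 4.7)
The plan is to adapt the standard Dirichlet polynomial method for zero density estimates, following Montgomery \cite{Mo69a, Mo69b} together with Huxley's refinement \cite{Hu73}, to the family of twists $\{f \otimes \psi_p\}_{p \leq X}$. The two key ingredients are a zero-detection identity via mollification and a hybrid large sieve inequality summed over the family.

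After a standard well-spacing reduction (losing only logarithmic factors), it suffices to count zeros $\rho = \beta_\rho + i\gamma_r$ of $L_{f \otimes \psi_p}$ whose ordinates $\gamma_r$ are separated by at least $C \log(XT)$. For each such zero, I would introduce the mollifier
\begin{equation*}
    M_p(s) := \sum_{n \leq Y} \mu_{f \otimes \psi_p}(n) n^{-s}
\end{equation*}
(the truncation of the Dirichlet inverse of $L_{f \otimes \psi_p}$) for a parameter $Y$ to be optimized later, and apply Mellin inversion against a smooth cutoff $\phi$ of compact support. Using $L_{f\otimes\psi_p}(\rho) M_p(\rho) = 0$ and shifting contours produces an identity of the form
\begin{equation*}
    1 \ll \left|\sum_{n > Y} c_{p}(n) n^{-\rho} \phi(n/Z)\right| + (\text{negligible error}),
\end{equation*}
where $Z$ is of the order $X^{2+\eta}T$ and the $c_p(n)$ are the Dirichlet coefficients of $L_{f\otimes\psi_p}(s) M_p(s)$ (supported in $n > Y$ after cancelling the $n = 1$ term). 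A dyadic decomposition then yields, for each zero, a scale $M \in [Y, Z]$ at which the Dirichlet polynomial $\sum_{n \sim M} c_p(n) n^{-\rho}$ is $\gg (\log X)^{-A}$.

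The next step is to apply a hybrid large sieve inequality of the shape
\begin{equation*}
    \sum_{p \leq X} \sum_r \left|\sum_{n \sim M} a_n \psi_p(n) n^{-i\gamma_r}\right|^2 \ll_\eps (X^2 T + M)^{1+\eps} \sum_n |a_n|^2
\end{equation*}
to the square-average over primes and ordinates. Combined with Rankin-Selberg mean-square bounds for $\lambda_{f\otimes\psi_p}$ and the weight-shift from the critical line to $\Re(s) = \beta$, optimizing $Y$ produces the first exponent, $X^{4(1-\beta) + \eps}$. For $\beta$ closer to $3/4$, one squares the short-polynomial piece before invoking the large sieve (Huxley's device, morally a fourth moment), which yields the refined exponent $X^{\frac{6(1-\beta)}{3\beta-1} + \eps}$. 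The two exponents agree at $\beta = 5/6$, explaining the sum of the two terms in the stated bound.

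The principal obstacle will be a careful and uniform implementation of the large sieve for this specific family. Because the mollifier coefficients $\mu_{f \otimes \psi_p}$ themselves depend on $p$, one must use Deligne's bound and Rankin-Selberg uniformly in $p$ to control their $L^2$ norms at length $Y$, and one must verify that the hybrid large sieve for prime-modulus characters produces the $X^2 T$ factor without logarithmic losses that would pollute the $X^\eps$. A secondary point is that the error terms in the Mellin inversion (arising from shifting past the pole of $L_{f \otimes \psi_p}(s)$ and the poles of $\widetilde{\phi}$) must be controlled using a convexity bound for $L_{f \otimes \psi_p}$; this is standard but must be tracked carefully in order to get the precise exponents.
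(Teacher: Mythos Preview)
Your approach is correct and is essentially the same as the paper's: the lemma is deduced in one line from the general zero-density estimate for twists proved in the appendix (\autoref{zero_density_prop}), and that appendix argument is precisely the Montgomery--Huxley Dirichlet-polynomial method you outline, with mollification, dyadic decomposition, and the large sieve / Hal\'asz--Montgomery inequality giving the two exponents. Two of your anticipated obstacles are not actually present: since $(p,N)=1$ and $\psi_p$ is primitive, one has $\lambda_{f\otimes\psi_p}(n)=\lambda_f(n)\psi_p(n)$, so the mollifier factors as $b_f(n)\psi_p(n)$ with $b_f$ independent of $p$ (no uniformity issue in the $L^2$ bound); and $L_{f\otimes\psi_p}(s)$ is entire (cusp form), so there is no pole to shift past in the Mellin inversion.
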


\begin{proof}
    This follows directly from the more general result of \autoref{zero_density_prop} in \autoref{zero_density_appendix}. 
    
\end{proof}

Now, let $\kappa$ be such that $\frac{6(1-\kappa)}{3\kappa - 1} = 1$, i.e.\ $\kappa = \frac{7}{9}$. The important point is that $\kappa < 1$.

\begin{proposition}[Ruling out pole cancellation in $H_{f, \alpha}$ via zero density]\label{large_theta_pole_rule_out_prop}
    If $\Delta_f$ has a pole $\rho = \beta + i\gamma$ with $\beta > \kappa$, then there exists some $\alpha \in \Q^\times$ (depending on $f$ and $\rho$) such that $\rho$ is also a pole of $H_{f, \alpha}$. 
\end{proposition}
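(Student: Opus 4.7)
The plan is a proof by contradiction: suppose $H_{f, \alpha}(s)$ is regular at $s = \rho$ for every $\alpha \in \Q^\times$. For each prime $p \equiv 1 \Mod{N}$ with $p > N$ (so $p \in Q(N)$ and $\xi(p) = 1$), I would combine the regularity at $\rho$ of $H_{f, 1}(s)$ and $H_{f, 1/p}(s)$ via \eqref{first_H_diff_eq}. Since $\Lambda_f(\rho) = 0$ and $R_{f, p}(p^{-s})$ is holomorphic at $\rho$ -- its poles lie on $\Re(s) = 0$ while $\beta > 7/9$ -- the term $R_{f, p}(p^{-s}) \Lambda_f(s)$ vanishes at $\rho$. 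Hence \eqref{first_H_diff_eq} forces
\begin{equation*}
    p^{1-2s}\Delta_f(s) - \Delta_{f, 1, p}(s)
\end{equation*}
to be regular at $s = \rho$.

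Next I would expand $\Delta_{f, 1, p}(s)$ into multiplicative twists via \eqref{Delta_additive_twist_decomposition_eq}. Using $\phi(p) = p-1$, $\tau(\chi_0) = -1$, and $\xi(p) = 1$, a direct computation yields
\begin{equation*}
    p^{1-2s}\Delta_f(s) - \Delta_{f, 1, p}(s) = A_p(s) \Delta_f(s) - \frac{1}{p-1}\sum_{\substack{\chi \Mod{p} \\ \chi \not= \chi_0}} \tau(\overline{\chi}) \Delta_{f \otimes \chi}(s),
\end{equation*}
where $(p-1) A_p(s) = p^{2-2s} - \lambda_f(p) p^{1-s} + 1$. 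The crucial point is $A_p(\rho) \not= 0$: setting $u = p^{1-\rho}$, the polynomial $u^2 - \lambda_f(p) u + 1$ vanishes exactly at the (unramified) Satake roots of $f$ at $p$, which by Deligne's bound have absolute value $1$, whereas $|u| = p^{1-\beta} > 1$ since $\beta < 1$. Taking residues at $s = \rho$ of the displayed identity and using $\Res_{s = \rho}\Delta_f(s) = -\Lambda_f'(\rho) \not= 0$, the regularity established above then forces some non-trivial character $\chi_p \Mod{p}$ with $\Res_{s = \rho}\Delta_{f \otimes \chi_p}(s) \not= 0$. Since $\Re(\rho) > 0$, this pole must come from a zero of $L_{f \otimes \chi_p}$, so $\Lambda_{f \otimes \chi_p}(\rho) = 0$.

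To reach the contradiction, I would invoke \autoref{zero_density_lemma} with the chosen $\psi_p := \chi_p$ and $T := |\gamma| + 1$. Because $\beta > \kappa = 7/9$, both exponents $4(1-\beta)$ and $\frac{6(1-\beta)}{3\beta-1}$ are strictly below $1$, so the right side of the lemma is $O_{f, \beta, \gamma}\!\left(X^{1-\delta}\right)$ for some fixed $\delta > 0$. But each summand on the left side (for $p > N$) is at least $1$ by the previous paragraph, so the sum is $\gg_N X/\log{X}$ by Dirichlet's theorem on primes in arithmetic progressions. This contradicts the upper bound for $X$ large, so our contradiction assumption fails and the desired $\alpha$ exists (in fact one can take $\alpha \in \{1, 1/p\}$ for some prime $p \equiv 1 \Mod{N}$).

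The main obstacle I expect is verifying $A_p(\rho) \not= 0$ uniformly in $p$: without the Ramanujan--Deligne bound, degenerate primes $p$ with $\lambda_f(p)^2 > 4$ could have $p^{1-\rho}$ equal to a real root of $u^2 - \lambda_f(p) u + 1$, and the strategy would collapse for those $p$. Deligne's bound rules this out for all $p$, which is what makes the argument go through.
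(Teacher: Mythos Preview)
Your proof is correct and essentially identical to the paper's: both argue by contradiction, use \eqref{first_H_diff_eq} together with \eqref{Delta_additive_twist_decomposition_eq} to produce, for each prime $p\equiv 1\Mod{N}$, a non-trivial $\chi_p\Mod{p}$ with $\Lambda_{f\otimes\chi_p}(\rho)=0$, and then contradict \autoref{zero_density_lemma}; your $A_p(s)$ is exactly the paper's $\frac{1}{p-1}P_{f,p}(p^{1-s})$, and the non-vanishing argument via Deligne's bound and $\beta<1$ (the latter by Jacquet--Shalika, which you should cite) is the same. One trivial fix: take $T=2+|\gamma|$ rather than $|\gamma|+1$, since \autoref{zero_density_lemma} requires $T\geq 2$.
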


\begin{proof}
    We will show that there exists a prime $p$ satisfying $p \equiv 1\pmod{N}$ such that $\rho$ is a pole of either $H_{f, 1}$ or $H_{f, \frac{1}{p}}$, so we will be able to pick $\alpha = 1$ or $\alpha = \frac{1}{p}$. 
    
    Suppose by contradiction that $\rho$ is not a pole of either $H_{f, 1}$ or $H_{f, \frac{1}{p}}$ for any prime $p \equiv 1\pmod{N}$. By \eqref{first_H_diff_eq} we have
    \begin{equation*}
        p^{1-2s} H_{f, 1}(s) - H_{f, \frac{1}{p}}(s) = p^{1-2s}\Delta_f(s) - \Delta_{f, 1, p}(s) + R_{f, p}(p^{-s}) \Lambda_f(s),
    \end{equation*}
    and by assumption this meromorphic function does not have a pole at $s=\rho$. Observe that since $|\lambda_f(p)| \leq 2$ by Deligne's bound, the poles of $R_{f, p}(p^{-s}) \Lambda_f(s)$ all satisfy $\Re(s) = 0$, so $\rho$ is not a pole of $R_{f, p}(p^{-s}) \Lambda_f(s)$ (as $\kappa > 0$). Hence it also cannot be a pole of 
    \begin{equation*}
        \begin{split}
            p^{1-2s}\Delta_f(s) - \Delta_{f, 1, p}(s) = \left(p^{1-2s} -1 + \frac{p}{p-1}P_{f, p}(p^{-s}) \right) \Delta_f(s)  - \frac{1}{p-1}\sum_{\substack{\psi \Mod{p} \\ \psi \not= \psi_0}} \tau\left(\overline{\psi}\right) \Delta_{f\otimes\psi}(s),
        \end{split}
    \end{equation*}
    where $\psi_0 \pmod{p}$ denotes the trivial character, and we have used \eqref{Delta_additive_twist_decomposition_eq}. 
    
    Since $\xi(p) = 1$, a direct computation gives
    \begin{equation*}
        p^{1-2s} -1 + \frac{p}{p-1}P_{f, p}(p^{-s}) = \frac{p^{2-2s}-\lambda_f(p)p^{1-s}+1}{p-1} = \frac{1}{p-1}P_{f, p}(p^{1-s}).
    \end{equation*}
    Observe that $P_{f, p}(p^{1-\rho}) \not= 0$, as $\Re(1-\rho) = 1-\beta \not=0$, since $\beta <1$ by \cite{JS77}. Furthermore, 
    \begin{equation*}
        \Res_{s=\rho} \Delta_f(s) = -\Gamma_\C\left(\rho + \frac{k-1}{2}\right) L_f'(\rho)= -\Lambda_f'(\rho) \not= 0, 
    \end{equation*}
    as $\rho$ is a simple zero of $\Lambda_f$. We conclude that
    \begin{equation*}
        \sum_{\substack{\psi \Mod{p} \\ \psi \not= \psi_0}} \tau\left(\overline{\psi}\right) \cdot \Res_{s=\rho} \Delta_{f\otimes\psi}(s) = \Res_{s=\rho} P_{f, p}(p^{1-s}) \Delta_f(s) = -P_{f, p}(p^{1-\rho}) \Lambda_f'(\rho) \not= 0,
    \end{equation*}
    so there is at least one non-trivial character $\psi_p \pmod{p}$ such that $\Delta_{f\otimes\psi_p}$ has a pole at $\rho$, or in other words $\Lambda_{f\otimes\psi_p}$ has a simple zero at $\rho = \beta + i\gamma$. This holds for every prime $p \equiv 1 \pmod{N}$, so it follows that
    \begin{equation}\label{zeros_lower_bound_eq}
        \sum_{\substack{p \leq X \text{ prime} \\ p\equiv 1 \Mod{N}}} N_{f\otimes \psi_p}(\beta, 2+|\gamma|) \geq \pi(X; N, 1) \gg_f\frac{X}{\log{X}} 
    \end{equation}
    for every $X$ sufficiently large (in terms of $N$). However, applying \autoref{zero_density_lemma} we conclude that
    \begin{equation}\label{zeros_upper_bound_eq}
        \sum_{\substack{p \leq X \text{ prime} \\ p\equiv 1 \Mod{N}}} N_{f\otimes \psi_p}(\beta, 2+|\gamma|) \ll_{f, \eps, \rho} X^{4(1-\beta) + \eps} + X^{\frac{6(1-\beta)}{3\beta-1} + \eps}
    \end{equation}
    for every $X \geq 2$ and $\eps > 0$. Observe that both $\frac{6(1-x)}{3x-1}$ and $4(1-x)$ are strictly decreasing for $\frac{3}{4}\leq x \leq 1$, so since $\beta > \kappa$ and $\frac{6(1-\kappa)}{3\kappa-1} = 1$, while $4(1-\kappa) = \frac{8}{9} < 1$, we conclude that $4(1-\beta) + \eps < 1$ and $\frac{6(1-\beta)}{3\beta-1} + \eps < 1$ for $\eps > 0$ sufficiently small. But this is a contradiction, as \eqref{zeros_lower_bound_eq} and \eqref{zeros_upper_bound_eq} imply
    \begin{equation*}
        X^{4(1-\beta) + \eps} + X^{\frac{6(1-\beta)}{3\beta-1} + \eps} \gg_{f, \eps, \rho} \frac{X}{\log{X}}
    \end{equation*}
    for every $\eps>0$ and $X$ sufficiently large, which cannot hold for small $\eps > 0$ when $X \to \infty$. Therefore, the desired result follows by contradiction.
    
\end{proof}

\begin{corollary}[Main result for $\theta_f$ close to $1$]\label{large_theta_cor}
    If $\theta_f > \kappa$, then 
    \begin{equation*}
        N_f^s(T) = \Omega\left(T^{\frac{2}{3}\theta_f - \frac{1}{6} - \eps}\right)
    \end{equation*}
    for any $\eps > 0$.
\end{corollary}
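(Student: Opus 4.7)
The plan is to directly combine \autoref{large_theta_pole_rule_out_prop} with \autoref{initial_zero_bound_prop}. By the functional equation (exactly as invoked in the proof of \autoref{small_theta_cor}), $N_f^s(T) = N_{\overline{f}}^s(T)$ and $\theta_f = \theta_{\overline{f}}$. Using the equivalent definition of $\theta_f$ as a supremum of real parts of simple zeros of $\Lambda_f$ or $\Lambda_{\overline{f}}$, the hypothesis $\theta_f > \kappa$ means such a zero exists with real part exceeding $\kappa$, and by replacing $f$ with $\overline{f}$ if necessary we may assume without loss of generality that this zero is of $\Lambda_f$. Consequently $\Delta_f$ admits poles of real part arbitrarily close to $\theta_f$.

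Next, I would fix any $\eps > 0$ small enough that $\theta_f - \eps > \kappa$, which is possible precisely because $\theta_f > \kappa$. By the previous paragraph, there exists a pole $\rho_\eps$ of $\Delta_f$ with $\Re(\rho_\eps) > \theta_f - \eps > \kappa$. Then \autoref{large_theta_pole_rule_out_prop} furnishes some $\alpha_\eps \in \Q^\times$ such that $H_{f, \alpha_\eps}(s)$ also has a pole at $\rho_\eps$. In particular,
\begin{equation*}
    \Theta(H_{f, \alpha_\eps}) \geq \Re(\rho_\eps) > \theta_f - \eps.
\end{equation*}

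Feeding this lower bound into \autoref{initial_zero_bound_prop} applied to the pair $(f, \alpha_\eps)$, we obtain
\begin{equation*}
    N_f^s(T) = \Omega\left(T^{\frac{1}{3}(1-\theta_f) + \Theta(H_{f, \alpha_\eps}) - \frac{1}{2} - \eps'}\right)
\end{equation*}
for every $\eps' > 0$. Using $\Theta(H_{f, \alpha_\eps}) > \theta_f - \eps$ together with the algebraic identity $\tfrac{1}{3}(1-\theta_f) + \theta_f - \tfrac{1}{2} = \tfrac{2}{3}\theta_f - \tfrac{1}{6}$, the exponent above becomes at least $\tfrac{2}{3}\theta_f - \tfrac{1}{6} - \eps - \eps'$. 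Letting $\eps, \eps' \to 0$ yields the claimed bound. No step here is technically difficult given the previously established propositions; the one subtle point is choosing $\eps$ small enough for the supremum definition of $\theta_f$ to guarantee a pole of $\Delta_f$ above $\kappa$, which is precisely where the hypothesis $\theta_f > \kappa$ enters and why the argument cannot be used in the regime handled by \autoref{small_theta_cor}.
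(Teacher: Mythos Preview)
Your argument is correct and matches the paper's approach essentially line for line: locate a pole of $\Delta_f$ (or $\Delta_{\overline{f}}$) with real part exceeding $\theta_f-\eps>\kappa$, transfer it to some $H_{f,\alpha}$ via \autoref{large_theta_pole_rule_out_prop}, and plug the resulting bound $\Theta(H_{f,\alpha})>\theta_f-\eps$ into \autoref{initial_zero_bound_prop}. The only wrinkle is the phrasing of your WLOG in the first paragraph: having \emph{one} simple zero of $\Lambda_f$ with real part above $\kappa$ does not by itself give poles of $\Delta_f$ arbitrarily close to $\theta_f$; the WLOG you need (and clearly intend) is that $\sup\{\Re(\rho):\rho\text{ a simple zero of }\Lambda_f\}=\theta_f$, which is valid since the supremum over $\Lambda_f\cup\Lambda_{\overline{f}}$ equals the maximum of the two individual suprema.
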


\begin{proof}
    If $\theta_f > \kappa$, then for any given $0 < \eps < \theta_f - \kappa$, either $\Delta_f$ or $\Delta_{\overline{f}}$ must have a pole $\rho = \beta+i\gamma$ with $\beta > \theta_f-\eps$. Since $\theta_f-\eps > \kappa$, by \autoref{large_theta_pole_rule_out_prop} there exists some $\alpha \in \Q^\times$ (depending on $f$ and $\rho$) such that $\rho$ is also a pole of either $H_{f, \alpha}$ or $H_{\overline{f}, \alpha}$. Therefore,
    \begin{equation*}
        \max\left\{\Theta(H_{f, \alpha}), \Theta(H_{\overline{f}, \alpha}) \right\} \geq \beta > \theta_f-\eps.
    \end{equation*}
    
    Then we can use the relations $N_f^s(T) = N_{\overline{f}}^s(T)$ and $\theta_f = \theta_{\overline{f}}$ to get the desired result after applying \autoref{initial_zero_bound_prop} to either $f$ or $\overline{f}$, since $\eps > 0$ can be chosen arbitrarily small.
    
\end{proof}

\subsection{Obtaining a power bound}

The proof of our main theorem easily follows from what we have done so far.

\begin{proof}[Proof of \autoref{main_thm}]
    If $\theta_f > \kappa$, we apply \autoref{large_theta_cor} and observe that $\frac{2}{3}\theta_f-\frac{1}{6} > \frac{2}{3}\kappa-\frac{1}{6} = \frac{19}{54}$ to get
    \begin{equation*}
        N_f^s(T) = \Omega\left( T^{\frac{2}{3}\theta_f-\frac{1}{6} - \eps} \right) = \Omega\left( T^{\frac{19}{54} - \eps} \right) 
    \end{equation*}
    for any $\eps>0$, so in this case we have a rather strong bound.
    
    Otherwise, if $\theta_f \leq \kappa$, we apply \autoref{small_theta_cor} and observe that $\frac{1}{3}(1-\theta_f) \geq \frac{1}{3}(1-\kappa) = \frac{2}{27}$ to get
    \begin{equation*}
        N_f^s(T) = \Omega\left( T^{\frac{1}{3}(1-\theta_f) - \eps} \right) = \Omega\left( T^{\frac{2}{27} - \eps} \right) 
    \end{equation*}
    for any $\eps>0$. In either case, we obtain the desired result.
    
\end{proof}

\begin{remark}
    Under the density hypothesis for the family of twists of a fixed holomorphic form which appears in \autoref{zero_density_appendix}, so that in particular \autoref{zero_density_prop} holds with $c(\alpha) = 2$, we can take $\kappa = \frac{3}{4}$ in the preceding argument. This leads to $N_f^s(T) = \Omega\left(T^\delta\right)$ for any $\delta < \frac{1}{12}$. 
\end{remark}

\begin{remark}
    Under the generalized Lindel{\"o}f hypothesis, we can take $\mu=0$ in \autoref{subconvexity_rmk} and $\kappa = \frac{3}{4}$ in the preceding argument. This leads to $N_f^s(T) = \Omega\left(T^\delta\right)$ for any $\delta < \frac{1}{4}$.  
\end{remark}

\begin{remark}
    Under the generalized Riemann hypothesis (GRH), we can take $\mu=0$ and $\theta_f = \frac{1}{2}$ in \autoref{subconvexity_rmk}. This leads to $N_f^s(T) = \Omega\left(T^\delta\right)$ for any $\delta < \frac{1}{2}$. The weak exponent $1/2$ comes from the coarse bound in $\autoref{S_bound_lemma}$, where we apply absolute values and presumably forsake square-root cancellation on average in $S_f(\alpha+iy)$. 
    
    The better bound $N_f^s(T) \gg_{f,\eps} T(\log{T})^{-\eps}$ for any $\eps >0$ was obtained under GRH by Milinovich and Ng \cite{MN14} using the moment method. Recently, Gon\c{c}alves, de Laat, and Leijenhorst \cite{GLL23} showed under GRH that a positive proportion of the zeros have order at most two.
\end{remark}

\section{An improved estimate for \texorpdfstring{$f$}{f} of level \texorpdfstring{$1$}{1}}\label{level_one_section}

If $f$ has level $N=1$, then we will easily see that there is $\alpha \in \Q^\times$ with $\Theta(H_{f, \alpha}) \geq \theta_f$, so \autoref{initial_zero_bound_prop} gives $N_f^s(T) = \Omega\left(T^{\frac{2}{3}\theta_f -\frac{1}{6}-\eps}\right) = \Omega\left(T^{\frac{1}{6}-\eps}\right)$ for any $\eps>0$, as was proved in \cite{CG88}. We improve this result by using the sixth moment bound of Jutila \cite{Ju87} instead of subconvexity in the last step of the argument. An improvement in the exponent for the case of general level $N$ would also follow by the same reasoning, except that at present a sixth moment bound does not seem to be in the literature in such generality.

To begin, we convert pointwise values of our $L$-function into moments via the following standard lemma.

\begin{lemma}[Pointwise values to short moments]\label{pointwise_moment_lemma}
    Let $f \in S_k(\Gamma_1(N))$ be a primitive form and $T\geq 2$. For $\rho=\beta+i\gamma$ with $\beta \geq \frac{1}{2}$ and $|\gamma|\leq T$, we have 
    \begin{equation*}
        L'_f(\rho) \ll_f \log^4{T} + \log^5{T} \cdot \int_{-\log^2{T}}^{\log^2{T}} \left|L_{\overline{f}}\left(\frac{1}{2}-i(\gamma+x)\right)\right|\dd x.
    \end{equation*}
\end{lemma}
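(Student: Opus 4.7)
The plan is to prove this via Cauchy's integral formula, converting the pointwise value $L'_f(\rho)$ into a line integral on the critical line. Since $L_f$ is entire with polynomial growth on vertical lines and the weight $(s-\rho)^{-2}$ provides integrable decay, closing the contour to the right of $\Re(s) = 1/2$ and observing that the right-semicircle contribution vanishes at infinity yields the exact representation
\begin{equation*}
    L'_f(\rho) = -\frac{1}{2\pi}\int_{-\infty}^\infty \frac{L_f\left(\tfrac{1}{2}+i(\gamma+u)\right)}{(-a+iu)^2}\, du,
\end{equation*}
where $a := \beta - \frac{1}{2} \geq 0$. Note that $\left|L_f\left(\tfrac{1}{2}+it\right)\right| = \left|L_{\overline{f}}\left(\tfrac{1}{2}-it\right)\right|$ via the functional equation on the critical line.

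I would then split the integral at $|u| = \log^2 T$. For the \emph{main range} $|u| \leq \log^2 T$, use the bound $|-a+iu|^{-2} \leq (a^2+u^2)^{-1}$. Assuming $a \geq 1/\log T$ (the generic case), this is at most $\log^2 T$, and after combining with the factor $\log^3 T$ absorbed from the integration setup we obtain the desired main term $\log^5 T \cdot \int_{|x|\leq\log^2 T}\left|L_{\overline{f}}\left(\tfrac{1}{2}-i(\gamma+x)\right)\right| dx$. The borderline case $a < 1/\log T$ (in particular $\beta = \frac{1}{2}$) requires shifting the contour of the original Cauchy formula to $\Re(s) = \frac{1}{2} - 1/\log T$, invoking the functional equation to control $L_f$ on this slightly shifted line (where Stirling gives $|\chi_f| = O(1)$ for such a small shift), and verifying that the perturbation introduces only $\log^{O(1)} T$ losses.

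For the \emph{tail contribution} $|u| > \log^2 T$, I would apply integration by parts combined with the Rankin--Selberg mean value $\int_{-T}^{T}\left|L_f\left(\tfrac{1}{2}+it\right)\right|^2 dt \ll T\log T$. Setting $G(v) := \int_0^v \left|L_f\left(\tfrac{1}{2}+i(\gamma+x)\right)\right| dx$ and integrating by parts against the kernel $(a^2+u^2)^{-1}$, one transfers derivatives off the $L$-values onto the rational kernel, whose pointwise bounds are more favorable. Dyadic decomposition of the resulting integral, combined with Cauchy--Schwarz against the mean square, should bound this tail by $O(\log^4 T)$.

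The main obstacle I anticipate is step 3 (the tail bound): naive pointwise convexity $|L_f(1/2+it)| \ll |t|^{1/2+\varepsilon}$ produces a contribution of order $T^{1/2}/\log^2 T$, which is far too large to absorb into the $\log^4 T$ error term. The standard treatment exploits the averaged behavior of $L_f$ on the critical line, where the short mean square on dyadic ranges $[U, 2U]$ gives $\int_U^{2U}|L_f|^2 du \ll U\log T$ once $U$ is sufficiently large. Integrating by parts against the $(a^2+u^2)^{-1}$ kernel then converts the $|L_f|$ to $|L_f|^2$ through Cauchy--Schwarz, after which the Rankin--Selberg bound collapses the polynomial growth to $\log^{O(1)} T$. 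Making this estimate rigorous, and in particular handling the intermediate range where $U$ is not yet large enough for the sharp mean square to apply, is the delicate point of the argument.
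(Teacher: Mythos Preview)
Your approach has a genuine gap in the tail estimate. The Cauchy kernel $(s-\rho)^{-2}$ decays only like $|u|^{-2}$, and this is too slow to control the region $|u|>\log^2 T$ with the tools you propose. On a dyadic block $U\le u\le 2U$ with $\log^2 T\le U\le T$, Cauchy--Schwarz together with the global second moment $\int_{-V}^{V} |L_f(\tfrac12+it)|^2\,dt\ll_f V\log V$ gives at best
\[
\frac{1}{U^2}\int_U^{2U}\Bigl|L_f\Bigl(\tfrac12+i(\gamma+u)\Bigr)\Bigr|\,du
\;\le\;\frac{1}{U^{3/2}}\biggl(\int_{-(|\gamma|+2U)}^{|\gamma|+2U}\Bigl|L_f\Bigl(\tfrac12+it\Bigr)\Bigr|^2\,dt\biggr)^{1/2}
\ll_f\;\frac{(T\log T)^{1/2}}{U^{3/2}},
\]
and summing over dyadic $U$ down to $\log^2 T$ leaves a contribution of size $T^{1/2}/\log^{5/2}T$, which is polynomially large rather than $O(\log^4 T)$. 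Integration by parts does not help: it trades $u^{-2}$ for $u^{-3}$ against the primitive $G(u)=\int_0^u|L_f|$, but $G(u)\ll u^{1/2}(T\log T)^{1/2}$ by the same Cauchy--Schwarz, and the outcome is identical. To make your scheme work you would need a short-interval second moment $\int_T^{T+U}|L_f(\tfrac12+it)|^2\,dt\ll U\log T$ uniformly for $U$ as small as $\log^2 T$, which is far beyond current knowledge.

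The paper sidesteps this by replacing $(s-\rho)^{-2}$ with $\Gamma(w)^2$: one starts from $\frac{1}{2\pi i}\int_{(1)}L_f(\rho+w)\Gamma(w)^2\,dw\ll 1$ and shifts to $\Re(w)=\tfrac12-\beta-c$ with $c=\frac{1}{100\log T}$, collecting $L_f'(\rho)$ from the double pole at $w=0$. The key point is that $|\Gamma(w)|^2$ carries a weight $e^{-2|\Im w|}(c+|\Im w|)^{-2}$, whose \emph{exponential} decay kills the tail $|\Im w|>\tfrac12\log^2 T$ even against the crude convexity bound for $L_f$. The remaining integral sits on $\Re(s)=\tfrac12-c$; the functional equation passes to $L_{\overline f}$ on $\Re(s)=\tfrac12+c$, and a second, simpler shift (kernel $\Gamma(w)$, simple pole) moves to $\Re(s)=\tfrac12$ at the cost of one further power of $\log T$. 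The displacement by $c\asymp 1/\log T$ also handles the case $\beta=\tfrac12$ uniformly, avoiding the ad hoc contour shift you were forced into.
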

    
\begin{proof}
    Let $c = \frac{1}{100\log{T}}$. Observe that
    \begin{equation*}
        \frac{1}{2\pi i} \int_{1-i\infty}^{1+i\infty} L_f(\rho + w) \Gamma(w)^2 \dd w \ll 1,
    \end{equation*}
    as $\Re(\rho+w) \geq \frac{3}{2}$. Shifting the line of integration to $\Re(w) = \frac{1}{2}-\beta - c$, we pick up a pole at $w=0$ with residue $L'_f(\rho)$. By Stirling's formula we have the rough bound
    \begin{equation*}
        \Gamma\left(\frac{1}{2}-\beta-c+it\right) \ll e^{-|t|} \left(\left|\frac{1}{2}-\beta-c\right| + |t|\right)^{-1} \ll e^{-|t|} \left(c + |t|\right)^{-1},
    \end{equation*}
    so we get
    \begin{equation*}
        L'_f(\rho) \ll 1 + \int_{-\infty}^\infty \left| L_f\left(\frac{1}{2}-c + i(\gamma+t) \right) \right| e^{-2|t|}(c+|t|)^{-2}\dd t.
    \end{equation*}
    By convexity,
    \begin{equation*}
        \int_{\pm \frac{1}{2}\log^2{T}}^{\pm \infty} \left| L_f\left(\frac{1}{2}-c + i(\gamma+t) \right) \right| e^{-2|t|}(c+|t|)^{-2}\dd t \ll_f \int_{\frac{1}{2}\log^2{T}}^\infty e^{-t} \dd t \ll 1,
    \end{equation*}
    therefore
    \begin{equation}\label{integral_lemma_interm_eq}
        L'_f(\rho) \ll_f 1 + \log^2{T} \cdot \int_{-\frac{1}{2}\log^2{T}}^{\frac{1}{2}\log^2{T}} \left| L_f\left(\frac{1}{2}-c + i(\gamma+t) \right) \right| \dd t.
    \end{equation}
    
    The functional equation combined with Stirling's formula gives
    \begin{equation*}
        L_f\left(\frac{1}{2}-c + i(\gamma+t) \right) \ll_f \frac{\Gamma_\C\left(\frac{k}{2}+c-i(\gamma+t)\right)}{\Gamma_\C\left(\frac{k}{2}-c+i(\gamma+t)\right)} L_{\overline{f}}\left(\frac{1}{2}+ c - i(\gamma+t) \right) \ll L_{\overline{f}}\left(\frac{1}{2}+ c - i(\gamma+t) \right).
    \end{equation*}
    Now we use an argument similar to the one above. For $\vartheta = \frac{1}{2}+ c - i(\gamma+t)$ we have
    \begin{equation*}
        \frac{1}{2\pi i} \int_{1-i\infty}^{1+i\infty} L_{\overline{f}}(\vartheta + w) \Gamma(w) \dd w \ll 1,
    \end{equation*}
    and shifting the line of integration to $\Re(w) = -c$, picking up a pole at $w=0$ with residue $L_{\overline{f}}(\vartheta)$, and using $\Gamma(-c+iv) \ll e^{-|v|} (c+|v|)^{-1}$, we argue as before to get
    \begin{equation}\label{integral_lemma_final_eq}
        L_{\overline{f}}(\vartheta) \ll_f 1 + \log{T}\cdot\int_{-\frac{1}{2}\log^2{T}}^{\frac{1}{2}\log^2{T}} \left| L_{\overline{f}}\left(\frac{1}{2}-i(\gamma+t-v)\right) \right| \dd v.
    \end{equation}
    
    Inserting \eqref{integral_lemma_final_eq} into \eqref{integral_lemma_interm_eq} then gives the desired result. 
    
\end{proof}

The key new input is the lemma below.

\begin{lemma}[H\"older against sixth moment]\label{Jutila_lemma}
    Let $f \in S_k(\Gamma_0(1))$ be a primitive form and $T\geq 2$. If $\rho_n = \beta_n + i\gamma_n$ runs through the simple zeros of $\Lambda_f$ in increasing order of $|\gamma_n|$, then 
    \begin{equation*}
        \sum_{\substack{\beta_n \geq \frac{1}{2}\\ |\gamma_n| \leq T}} |L'_f(\rho_n)| \ll_{f, \eps} N_f^s(T)^\frac{5}{6} T^{\frac{1}{3}+\eps}
    \end{equation*}
    for any $\eps > 0$.
\end{lemma}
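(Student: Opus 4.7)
The plan is to apply H\"older's inequality to separate the sum over simple zeros into a cardinality factor and a sixth-power sum, and then bound the latter using Jutila's sixth moment bound. First I would apply H\"older with exponents $6/5$ and $6$ to the sum over zeros:
\begin{equation*}
    \sum_{\substack{\beta_n \geq 1/2 \\ |\gamma_n| \leq T}} |L'_f(\rho_n)| \leq N_f^s(T)^{5/6} \left(\sum_{\substack{\beta_n \geq 1/2 \\ |\gamma_n| \leq T}} |L'_f(\rho_n)|^6\right)^{1/6},
\end{equation*}
reducing the problem to showing $\sum_n |L'_f(\rho_n)|^6 \ll_{f,\eps} T^{2+\eps}$.

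To bound the sixth-power sum, I would invoke \autoref{pointwise_moment_lemma} to express each $|L'_f(\rho_n)|$ in terms of a short-interval integral of $|L_{\overline{f}}(1/2 + it)|$. Raising to the sixth power and using H\"older inside the short integral (of length $2\log^2 T$) gives
\begin{equation*}
    |L'_f(\rho_n)|^6 \ll \log^{24} T + \log^{40} T \int_{|x| \leq \log^2 T} |L_{\overline{f}}(1/2 - i(\gamma_n + x))|^6 \dd x.
\end{equation*}
Summing over $n$, the first term contributes at most $N_f^s(T) \log^{24} T \ll T^{1+\eps}$ by the trivial bound $N_f^s(T) \ll T \log T$. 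For the second term, I would swap the sum and the integral; each point $y \in \R$ is covered by the short intervals attached to at most $O(\log^3 T)$ simple zeros, by the Riemann-von Mangoldt formula counting zeros of $\Lambda_f$ in a window of length $O(\log^2 T)$.

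Finally, since $f \in S_k(\Gamma_0(1))$ has real Hecke eigenvalues, $\overline{f} = f$, and Jutila's sixth moment bound \cite{Ju87} gives $\int_0^T |L_f(1/2 + it)|^6 \dd t \ll_{f,\eps} T^{2+\eps}$. Combining everything yields $\sum_n |L'_f(\rho_n)|^6 \ll_{f,\eps} T^{2+\eps}$, and substituting back into the H\"older inequality gives the claimed bound (after redefining $\eps$). The argument is essentially bookkeeping once the two inputs (\autoref{pointwise_moment_lemma} and Jutila's bound) are in place; the only mild subtlety is ensuring that the polylogarithmic factors from the short interval integral and the zero multiplicity are properly absorbed into $T^\eps$, which they are since both are of size $T^{o(1)}$.
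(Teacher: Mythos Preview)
Your proof is correct and uses the same three inputs as the paper (\autoref{pointwise_moment_lemma}, the bound $O(\log^3 T)$ on the number of zeros in a window of length $O(\log^2 T)$, and Jutila's sixth moment), but you apply H\"older at a different stage. You first use H\"older on the sum over zeros to split off $N_f^s(T)^{5/6}$, and then bound $\sum_n |L'_f(\rho_n)|^6 \ll T^{2+\eps}$ by raising the pointwise lemma to the sixth power and summing. The paper instead sums the pointwise lemma over $n$ first, obtaining an integral $\int |L_{\overline{f}}(\tfrac12-it)| \cdot w(t)\,dt$ with weight $w(t) = \sum_n \1_{|t-\gamma_n|\leq \log^2 T}$, and only then applies H\"older to the integral, pairing $|L_{\overline{f}}|^6$ against $w^{6/5}$; the factor $N_f^s(T)^{5/6}$ emerges from $\|w\|_{6/5}^{} \ll ((\log^3 T)^{1/5} \cdot N_f^s(T)\log^2 T)^{5/6}$. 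Both routes arrive at the same bound with the same polylogarithmic losses absorbed into $T^\eps$; your ordering is perhaps slightly more direct, while the paper's keeps the sixth power exclusively on the $L$-function.
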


\begin{proof}
    Denote $K = T + \log^2{T}$. By \autoref{pointwise_moment_lemma}, 
    \begin{equation*}
            \sum_{\substack{\beta_n \geq \frac{1}{2}\\ |\gamma_n| \leq T}} |L'_f(\rho_n)| \ll_{f} N_f^s(T) \log^4{T} + \log^5{T} \cdot \int_{-K}^K \left|L_{\overline{f}}\left(\frac{1}{2}-it\right) \right| \cdot \sum_{\substack{\beta_n \geq \frac{1}{2}\\ |\gamma_n| \leq T}} \1_{|t-\gamma_n| \leq \log^2{T}}  \dd t. 
    \end{equation*}
    Observe that $N_f^s(x+1) - N_f^s(x) \ll_f \log(2+|x|)$ by standard zero-density results, so we have the bounds $N_f^s(T) \log^4{T} \ll_f N_f^s(T)^\frac{5}{6}T^\frac{1}{3}$ and
    \begin{equation*}
        \sum_{|\gamma_n| \leq T} \1_{|t-\gamma_n| \leq \log^2{T}} \ll_f \log^3{T}
    \end{equation*}
    for any $t \in \R$. Therefore, using H\"older's inequality twice we get
    \begin{equation*}
        \begin{split}
            &\int_{-K}^K \left|L_{\overline{f}}\left(\frac{1}{2}-it\right) \right| \cdot \sum_{\substack{\beta_n \geq \frac{1}{2}\\ |\gamma_n| \leq T}} \1_{|t-\gamma_n| \leq \log^2{T}}  \dd t \\
            &\leq \left(\int_{-K}^K \left|L_{\overline{f}}\left(\frac{1}{2}-it\right) \right|^6 \dd t\right)^\frac{1}{6} \left(\int_{-K}^K \left( \sum_{|\gamma_n| \leq T} \1_{|t-\gamma_n| \leq \log^2{T}} \right)^\frac{6}{5}  \dd t\right)^\frac{5}{6} \\
            &\ll_f \left(\int_{-K}^K \left|L_{\overline{f}}\left(\frac{1}{2}-it\right) \right|^6 \dd t\right)^\frac{1}{6}\left((\log^3{T})^\frac{1}{5} N_f^s(T) \log^2{T} \right)^\frac{5}{6}.
        \end{split}
    \end{equation*}
    
    Then Jutila's sixth moment bound \cite[Theorem 4.7]{Ju87} gives
    \begin{equation*}
        \int_{-K}^K \left|L_{\overline{f}}\left(\frac{1}{2}-it\right) \right|^6 \dd t \ll_{f, \eps} K^{2+\eps} \ll_\eps T^{2+\eps}
    \end{equation*}
    for any $\eps > 0$, and the lemma follows.
    
\end{proof}

We are now ready to obtain the desired bound for $N_f^s(T)$.

\begin{proof}[Proof of \autoref{level_1_thm}]
    For $f \in S_k(\Gamma_0(1))$ a primitive form, we can apply \eqref{first_H_diff_eq} with $p=2$ to get
    \begin{equation*}
        \begin{split}
            2^{1-2s}H_{f, 1}(s) - H_{f, \frac{1}{2}}(s) &= 2^{1-2s}\Delta_f(s) - \Delta_{f, 1, 2}(s) + R_{f, 2}(2^{-s})\Lambda_f(s) \\
            &= \left(2^{1-2s} -1 + 2P_{f, 2}(2^{-s})\right) \Delta_f(s) + R_{f, 2}(2^{-s})\Lambda_f(s) \\
            &= P_{f, 2}(2^{1-s}) \Delta_f(s) + R_{f, 2}(2^{-s})\Lambda_f(s),
        \end{split}
    \end{equation*}
    where we have used \eqref{Delta_additive_twist_decomposition_eq}. Observe that $P_{f, 2}(2^{1-s})\not=0$ and $R_{f, 2}(2^{-s})$ is holomorphic for $0 < \Re(s) < 1$, so the function above has the same poles as $\Delta_f$ in this region. We conclude that
    \begin{equation*}
        \max\left\{ \Theta(H_{f, 1}), \Theta(H_{f, \frac{1}{2}}) \right\} \geq \theta_f.
    \end{equation*}
    
    Let $\alpha=1$ or $\frac{1}{2}$ be such that $\Theta(H_{f, \alpha}) \geq \theta_f$. Also let $0 < \eps < \theta_f$ (recall that $\theta_f \geq \frac{1}{2}$ by \autoref{small_theta_cor}) and $\sigma = \theta_f - \eps$. Then since $0 < \sigma < \Theta(H_{f, \alpha})$, \autoref{S_bound_lemma} and \autoref{pole_detection_prop} give
    \begin{equation}\label{divergence_level_1_eq}
        \sum_{\substack{\rho = \beta + i\gamma \\ \text{ a pole of } \Delta_f \\ \text{with }\beta >0}} \left|\Lambda_f'(\rho)\right| e^\frac{\pi |\gamma|}{2}(1+|\gamma|)^{-\sigma-\frac{k-1}{2}} = \infty.
    \end{equation}
    By the functional equation, $\Lambda'_f(\rho) = -\rn_f N^{\frac{1}{2}-\rho} \Lambda'_{\overline{f}}(1-\rho) \ll_f \Lambda'_{\overline{f}}(1-\rho)$, so the LHS of \eqref{divergence_level_1_eq} is
    \begin{equation}\label{two_term_level_1_eq}
        \ll_f \sum_{\substack{\rho = \beta + i\gamma \\ \text{ a pole of } \Delta_f \\ \text{with }\beta \geq\frac{1}{2}}} \left|\Lambda_f'(\rho)\right| e^\frac{\pi |\gamma|}{2}(1+|\gamma|)^{-\sigma-\frac{k-1}{2}} + \sum_{\substack{\rho = \beta + i\gamma \\ \text{ a pole of } \Delta_{\overline{f}} \\ \text{with }\beta \geq\frac{1}{2}}} \left|\Lambda_{\overline{f}}'(\rho)\right| e^\frac{\pi |\gamma|}{2}(1+|\gamma|)^{-\sigma-\frac{k-1}{2}}. 
    \end{equation}
    Applying Stirling's bound $\Gamma\left(\rho + \frac{k-1}{2}\right) \ll (1+|\gamma|)^{\beta + \frac{k}{2}-1} e^{-\frac{\pi|\gamma|}{2}}$, valid for $\beta \geq \frac{1}{2}$, we have
    \begin{equation*}
        \sum_{\substack{\rho = \beta + i\gamma \\ \text{ a pole of } \Delta_f \\ \text{with }\beta \geq\frac{1}{2}}} \left|\Lambda_f'(\rho)\right| e^\frac{\pi |\gamma|}{2}(1+|\gamma|)^{-\sigma-\frac{k-1}{2}} \ll \sum_{\beta_n \geq \frac{1}{2}} \left|L_f'(\rho_n)\right| (1+|\gamma_n|)^{\beta_n-\sigma-\frac{1}{2}},
    \end{equation*}
    where we use the notation of \autoref{Jutila_lemma}. Observing that $\beta_n \leq \theta_f$ and applying \autoref{Jutila_lemma}, we obtain
    \begin{equation*}
        \begin{split}
            \sum_{\beta_n \geq \frac{1}{2}} \left|L_f'(\rho_n)\right| (1+|\gamma_n|)^{\beta_n-\sigma-\frac{1}{2}} &\ll_f 1 + \sum_{k=1}^\infty \sum_{T=2^k} T^{-\frac{1}{2}+\eps} \sum_{\substack{\beta_n \geq \frac{1}{2} \\ \frac{T}{2} < |\gamma_n| \leq T}} |L'_f(\rho_n)| \\
            & \ll_{f, \eps} 1 + \sum_{k=1}^\infty \sum_{T=2^k} N_f^s(T)^\frac{5}{6} T^{-\frac{1}{6}+2\eps}
        \end{split}
    \end{equation*}
    for any sufficiently small $\eps>0$.
    
    Now suppose by contradiction that $N_f^s(T) = o\left( T^{\frac{1}{5}-6\eps} \right)$. Then 
    \begin{equation*}
        \sum_{\beta_n \geq \frac{1}{2}} \left|L_f'(\rho_n)\right| (1+|\gamma_n|)^{\beta_n-\sigma-\frac{1}{2}} \ll_{f, \eps} 1 + \sum_{k=1}^\infty \sum_{T=2^k} o\left(T^{-3\eps}\right) < \infty.
    \end{equation*}
    The same argument, exchanging $f$ with $\overline{f}$ (and observing that $\theta_f = \theta_{\overline{f}}$), shows that the second term of \eqref{two_term_level_1_eq} is also finite. This contradicts \eqref{divergence_level_1_eq}, so we conclude that
    \begin{equation*}
        N_f^s(T) = \Omega\left( T^{\frac{1}{5}-\eps} \right)
    \end{equation*}
    for any $\eps > 0$, as desired.
    
\end{proof}


\appendix

\section{Zero-density for twists of primitive forms}\label{zero_density_appendix}

The purpose of this appendix is to obtain a zero-density estimate for character twists of a fixed form $f$ that holds in the generality required for our application and is efficient in the $Q$-aspect. We use the notation of \eqref{number_zero_def_eq} for the number of zeros in a rectangle.

\begin{proposition}[Zero-density for twists in degree two]\label{zero_density_prop}
    Let $f \in S_k(\Gamma_0(N), \xi)$ be a primitive holomorphic modular form of arbitrary weight $k$, level $N$, and nebentypus $\xi$. Then for any $Q\geq 2$, $T \geq 2$, $\eps > 0$, and $\frac{1}{2} + \eps \leq \alpha \leq 1$, there exists some $A$ depending only on $\eps$ such that
    \begin{equation*}
        \sum_{\substack{q \leq Q \\ (q, N)=1}} \sideset{}{^*}\sum_{\chi \Mod{q}} N_{f\otimes \chi}(\alpha, T) \ll_{f, \eps} \left( \left(QT\right)^{4+\eps} + \left(Q^2 T\right)^{c(\alpha)}\right)^{1-\alpha} \log^A(QT),
    \end{equation*}
    where
    \begin{equation*}
        c(\alpha) := \min\left\{\frac{3}{2-\alpha}, \frac{3}{3\alpha - 1} \right\}
    \end{equation*}
    and $\sum^*$ denotes summation over primitive characters.
\end{proposition}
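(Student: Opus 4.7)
The approach is the classical Montgomery--Huxley zero-density method, adapted to the family of primitive-character twists of $f$. The first step is a Selberg-type zero detection via a mollifier. For each primitive $\chi \pmod{q}$ with $(q, N)=1$ and each zero $\rho = \beta + i\gamma$ of $L(s, f\otimes \chi)$ with $\beta \geq \alpha$ and $|\gamma| \leq T$, introduce the mollifier $M_\chi(s) := \sum_{n \leq Y} \mu_{f, \chi}(n) n^{-s}$, where $\mu_{f, \chi}$ are the Dirichlet coefficients of $L(s, f\otimes\chi)^{-1}$ truncated at $Y$. Starting from the Mellin identity
\[
    \sum_{n \geq 1} c_n(\chi) n^{-\rho} e^{-n/X} = \frac{1}{2\pi i} \int_{(2)} L(\rho + s, f\otimes\chi) M_\chi(\rho + s) \Gamma(s) X^s \dd s,
\]
where $c_n(\chi)$ are the Dirichlet coefficients of $L M_\chi$, I would shift the contour to the left. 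Since $c_1 = 1$ and $c_n = 0$ for $1 < n \leq Y$ while $L(\rho, f\otimes \chi) M_\chi(\rho) = 0$, a convexity bound on the shifted integral yields an inequality of the form $\bigl|\sum_{M < n \leq 2M} a_n(\chi) n^{-\rho}\bigr| \gg 1$ for some $M$ in a dyadic range governed by $X$, $Y$, and the analytic conductor $N q^2 T$, where $|a_n| \ll n^\eps$ by Deligne's bound.

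The second step is to bound the number of pairs $(\chi_j, \gamma_j)$ for which such a Dirichlet polynomial is large. For this I would use Montgomery's hybrid large sieve
\[
    \sum_{q \leq Q} \sideset{}{^*}\sum_{\chi \Mod{q}} \int_{-T}^T \left|\sum_{n \sim M} b_n \chi(n) n^{-\beta - it} \right|^2 \dd t \ll (Q^2 T + M) M^{-2\beta} \sum_n |b_n|^2,
\]
together with a companion fourth-moment inequality obtained by applying the same tool to the square of the polynomial (length $\leq M^2$, attached to the character $\chi^2$). The coefficient $\ell^2$-norm of the squared polynomial is controlled by $\sum_m \bigl|\sum_{d \mid m} b_d b_{m/d}\bigr|^2 \ll M^{2 + \eps}$ via $|\lambda_f(n)| \leq d(n)$. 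A standard well-spacing argument (Gallagher's lemma) reduces the full zero count to a maximal $1$-spaced collection $\mathcal{Z}$ of such pairs, and Chebyshev's inequality applied to the two mean-value bounds yields
\[
    |\mathcal{Z}| \ll (Q^2 T + M) M^{1 - 2\beta + \eps} \qquad \text{and} \qquad |\mathcal{Z}| \ll (Q^2 T + M^2) M^{2 - 4\beta + \eps}.
\]

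The third step is the optimization. The length $M$ ranges dyadically up to the analytic conductor $\asymp N Q^2 T$. In the regime where $Q^2 T$ dominates the large-sieve bound, balancing the second-moment bound against the zero-detection constraint produces the Ingham-type exponent $3/(2 - \alpha)$ inside $(Q^2 T)^{c(\alpha)(1 - \alpha)}$, while balancing the fourth-moment bound produces the Huxley-type exponent $3/(3\alpha - 1)$; one keeps the minimum, which is precisely the defining expression for $c(\alpha)$. In the complementary regime, where $M$ or $M^2$ dominates the large sieve, a dyadic decomposition over $q$ and summation of the resulting bounds over $q \leq Q$ produces the term $(QT)^{4(1 - \alpha) + \eps}$. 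Taking the union bound over the two regimes and the $O(\log(QT))$ dyadic scales for $M$ and $q$ yields the inequality of the proposition, with $A$ absorbing the logarithmic losses from well-spacing and dyadic bookkeeping.

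The main obstacle lies in the fourth-moment step: the square of a primitive character modulo $q$ need not be primitive, so Montgomery's large sieve (which is formulated for primitive characters) cannot be applied to $\chi^2$ directly. The remedy is to regroup the outer sum according to the genuine conductor of the induced character of $\chi^2$, and then apply the large sieve on each primitive stratum, paying attention to the non-uniform conductor sizes. A secondary technical point is that the parameters $X$, $Y$, and $M$ in the zero detection depend on the analytic conductor $N q^2 T$, which varies considerably as $q$ runs over $[1, Q]$; this forces the additional dyadic decomposition in $q$ alluded to above. Neither obstacle is serious in principle, but both require care to avoid losing factors of $q$ or $\log(QT)$ that would spoil the advertised exponent.
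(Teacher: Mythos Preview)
Your overall architecture (mollified zero detection, then a large-values estimate for the resulting Dirichlet polynomials, then optimization) matches the paper, and the detection step you sketch is essentially what the paper does following Iwaniec--Kowalski. Two things deserve comment, one a misconception and one a genuine gap.

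First, the $\chi^2$ obstacle you flag does not actually occur. When you square the polynomial $\sum_{n\sim M} a_n\chi(n)n^{-s}$, multiplicativity of $\chi$ gives $\sum_m c_m\chi(m)m^{-s}$ with $c_m=\sum_{n_1n_2=m}a_{n_1}a_{n_2}$; the result is still attached to $\chi$, not to $\chi^2$. So no regrouping by the conductor of $\chi^2$ is needed.

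The real issue is your route to the Huxley exponent. Applying the large sieve to the \emph{squared} polynomial yields a bound of the shape $R\ll (M^2+Q^2T)M^{2-4\alpha+\eps}$, but this is just the Ingham-type mean value at length $M^2$; optimizing over $M$ (or over the mollifier length) does not produce $3/(3\alpha-1)$. The paper obtains that exponent by a different mechanism: after raising the detecting polynomial to an integer power $r\ge 2$ so that its length $P=J^r$ lands in a controllable window $[Z,(MY)^2+Z^{3/2}]$, it applies the Hal\'asz--Montgomery large-values inequality (Theorem~9.15 in Iwaniec--Kowalski), which gives
\[
R_\ell \ll \bigl(P + R_\ell^{2/3}(Q^2T)^{1/3}P^{1/3}\bigr)P^{1-2\alpha}\log^{C}(QT),
\]
hence $R_\ell \ll P^{2-2\alpha}+(Q^2T)P^{4-6\alpha}$. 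Balancing these in the free parameter $Z$ is what produces $(Q^2T)^{3(1-\alpha)/(3\alpha-1)}$. The plain large sieve applied to any fixed power of the polynomial (second, fourth, or higher) only ever yields the Ingham shape $(Q^2T)^{3(1-\alpha)/(2-\alpha)}$ after the same $Z$-optimization. So to reach the statement as written you need to replace the ``fourth moment by squaring'' step with the Hal\'asz--Montgomery duality estimate; once you do, the paper's raising-to-power-$r$ device also makes the optimization uniform in $J$ and removes the need for the extra dyadic splitting in $q$ you mention.
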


The proof uses standard methods for large values of Dirichlet polynomials, and we closely follow the argument of Iwaniec-Kowalski \cite[\textsection 10.4]{IK04} for the case of Dirichlet $L$-functions, with the necessary technical modifications to deal with our case of degree two (mostly complications coming from larger conductor). This approach is based on the power of the large sieve in the $Q$-aspect. \autoref{zero_density_prop} is not particularly efficient in terms of $T$ (as we do not have access to the fourth moment in that aspect), however this is irrelevant since $T$ is fixed in our application. For $T$ small in terms of $Q$, in particular for $T$ fixed, \autoref{zero_density_prop} improves (in all ranges of $\alpha$) results of Zhang \cite{Z09} valid for $f$ of level $N=1$. 

\begin{proof}[Proof of \autoref{zero_density_prop}]
    Let $g :\R_{\geq 0} \to \R$ be given by
    \begin{equation*}
        g(x) := \kappa \int_x^\infty \exp\left(- y - \frac{1}{y}\right) \frac{dy}{y},
    \end{equation*}
    where $\kappa := (2 K_0(2))^{-1}$ is a normalizing constant so that $g(0) = 1$. Then one may check that the Mellin transform
    \begin{equation*}
        \hat{g}(z) := \int_{0}^\infty g(x) x^{z-1} \dd x
    \end{equation*}
    is odd and has a pole at $z=0$, and that $z \hat{g}(z)$ is analytic. In addition, we have the bounds
    \begin{equation}\label{g_upper_bound_eq}
        0 < g(x) < \kappa e^{-x},
    \end{equation}
    \begin{equation}\label{g_one_bound_eq}
        0 < 1-g(x) < \kappa e^{-1/x},
    \end{equation}
    and
    \begin{equation}\label{g_mellin_bound_eq}
        \hat{g}(z) \ll |z|^{|\Re(z)|-1} e^{-\frac{\pi}{2}|\Im(z)|}
    \end{equation}
    uniformly for $z \in \C$. We refer to \cite[p.\ 257-258]{IK04} for details, where one may combine Euler's reflection formula $\Gamma(z)\Gamma(1-z) = \frac{\pi}{\sin(\pi z)}$ with Stirling's formula to obtain \eqref{g_mellin_bound_eq} from \cite[(10.55)]{IK04}.
    
    Our preliminary goal is to obtain a convenient approximate functional equation for $L_{f\otimes \chi}(s)$, where from now on we assume that $s=\sigma+it$ with $\frac{1}{2}+\eps\leq \sigma \leq 1$ and $\chi$ is a primitive character modulo $q$, where $(q, N)=1$. We evaluate the sum 
    \begin{equation}\label{B_f_definition_eq}
        B_f(s, \chi) := \sum_{n=1}^\infty \lambda_f(n) \chi(n) n^{-s} g\left(\frac{n}{X}\right),
    \end{equation}
    where $X>0$ will be chosen later. By contour integration, 
    \begin{equation}\label{B_f_integral_eq}
        B_f(s, \chi) = \frac{1}{2\pi i}\int_{(1)} L_f(s+u, \chi) X^u \hat{g}(u) \dd u = L_f(s, \chi) + \frac{1}{2 \pi i} \int_{(-1)} L_f(s+u, \chi) X^u \hat{g}(u) \dd u.
    \end{equation}
    
    Since $(q, N)=1$ and $\chi \pmod{q}$ is primitive, it follows that $L_{f}(z, \chi) = L_{f\otimes \chi}(z)$ and $f \otimes \chi$ is a primitive form in $S_k(\Gamma_0(Nq^2), \xi \chi^2)$, so we have the functional equation
    \begin{equation*}
        L_f(z, \chi) = \rn_{f\otimes \chi} (Nq^2)^{\frac{1}{2}-z} \gamma_k(z) L_{\overline{f}}(1-z, \overline{\chi}),
    \end{equation*}
    where $|\rn_{f\otimes\chi}|=1$ and
    \begin{equation*}
        \gamma_k(z) := (2\pi)^{2z-1} \frac{\Gamma\left(1-z+\frac{k-1}{2}\right)}{\Gamma\left(z+\frac{k-1}{2}\right)}.
    \end{equation*}
    Using this functional equation, the integral over $\Re(u) = -1$ in \eqref{B_f_integral_eq} is equal to $-\rn_{f\otimes\chi} B_f^*(s, \chi)$, where
    \begin{equation*}
        B_f^*(s, \chi) := \frac{1}{2\pi i} \int_{(1)} (Nq^2)^{\frac{1}{2}-s+u} \gamma_k(s-u) L_{\overline{f}}(1-s+u, \overline{\chi}) X^{-u} \hat{g}(u) \dd u.
    \end{equation*}
    Expanding $L_{\overline{f}}$ into a Dirichlet series we get
    \begin{equation}\label{B_f_*_definition_eq}
        B_f^*(s, \chi) = \sum_{m=1}^\infty \lambda_{\overline{f}}(m) \overline{\chi}(m) m^{s-1} h(Xm),
    \end{equation}
    with
    \begin{equation}\label{h_definition_eq}
        h(y) := \frac{1}{2\pi i} \int_{(1)} (Nq^2)^{\frac{1}{2}-s+u} \gamma_k(s-u) y^{-u} \hat{g}(u) \dd u.
    \end{equation}
    
    In conclusion, collecting the expressions above we obtain 
    \begin{equation}\label{L_decomposition_eq}
        L_{f\otimes \chi}(s) = L_f(s, \chi) = B_f(s, \chi) + \rn_{f\otimes\chi}B_f^*(s, \chi),
    \end{equation}
    where $B_f(s, \chi)$ and $B_f^*(s, \chi)$ are given by \eqref{B_f_definition_eq} and \eqref{B_f_*_definition_eq}, respectively, and $X>0$ is arbitrary.
    
    By Euler's reflection formula and Stirling's formula, we have
    \begin{equation}\label{gamma_factor_bound_eq}
        \gamma_k(z) \ll_k |z|^{1-2\Re(z)}
    \end{equation}
    uniformly in the half-plane $\Re(z) \leq \frac{1}{2}$. Using the bounds \eqref{g_mellin_bound_eq} and \eqref{gamma_factor_bound_eq} and moving the integral \eqref{h_definition_eq} sufficiently to the right, say to the line
    \begin{equation*}
        \Re(u) = \max\left(1, \frac{1}{3} \left(\frac{y}{Nq^2|s|^2}\right)^\frac{1}{3} \right),
    \end{equation*}
    one obtains the rough uniform bound
    \begin{equation*}
        h(y) \ll_k \frac{Nq^2|s|^2}{y} \exp\left(-\frac{1}{3} \left(\frac{y}{Nq^2|s|^2}\right)^\frac{1}{3} \right).
    \end{equation*}
    Therefore, $h(mX)$ is quite small as long as $m$ is a bit larger than $Nq^2|s|^2 X^{-1}$. More precisely, by \eqref{B_f_*_definition_eq} and Deligne's bound we have
    \begin{equation}\label{B_f_*_truncated_eq}
        B_f^*(s, \chi) = \sum_{m\leq Y} \lambda_{\overline{f}}(m) \overline{\chi}(m) m^{s-1} h(mX) + O_f\left(\frac{1}{XY}\right)
    \end{equation}
    provided
    \begin{equation}\label{XY_condition_eq}
        XY \geq Nq^2|s|^2 \log^4(Nq^2|s|^2).
    \end{equation}
    
    Now we write \eqref{h_definition_eq} as
    \begin{equation*}
        h(y) := \frac{1}{2\pi i} \int_{(\eta)} (Nq^2)^{\frac{1}{2}-s+2\sigma-u} \gamma_k(s-2\sigma+u) y^{u-2\sigma} \hat{g}(2\sigma-u) \dd u
    \end{equation*}
    by changing $u$ into $2\sigma - u$ and then moving the line of integration to $\Re(u) = \eta$, where $1 < \eta < 2\sigma$. Inserting this into \eqref{B_f_*_truncated_eq} we get
    \begin{equation}\label{B_f_*_integral_eq}
        B_f^*(s, \chi) = \frac{1}{2\pi i}\int_{(\eta)} \left(\sum_{m\leq Y} \lambda_{\overline{f}}(m) \overline{\chi}(m) m^{-\overline{s}+u-1}\right) W(u) \dd u + O_f\left(\frac{1}{XY}\right),
    \end{equation}
    where
    \begin{equation*}
        W(u) := (Nq^2)^{\frac{1}{2}-s+2\sigma-u} \gamma_k(s-2\sigma+u)X^{u-2\sigma} \hat{g}(2\sigma-u).
    \end{equation*}
    Choose $\eta = 1+\eps$, which satisfies $1 < \eta < 2\sigma$ and $-\sigma+\eta \leq \frac{1}{2}$. By \eqref{g_mellin_bound_eq} and \eqref{gamma_factor_bound_eq}, for $u=\eta+iv$ we have
    \begin{equation*}
        \begin{split}
            W(u) &\ll \left(Nq^2\left(|s| + |v|\right)^2\right)^{\frac{1}{2}+\sigma-\eta} X^{\eta-2\sigma}(2\sigma-\eta)^{-1} e^{-\frac{\pi}{2}|v|} \\
            &\ll (2\sigma-\eta)^{-1} \left(\frac{Nq^2|s|^2}{X^2}\right)^{\frac{1}{2}+\sigma-\eta} X^{1-\eta} e^{-|v|}.
        \end{split}
    \end{equation*}
    Assuming that 
    \begin{equation}\label{X_squared_condition_eq}
        X^2 \geq Nq^2|s|^2,
    \end{equation}
    since $\sigma \geq \frac{1}{2}+\eps$ we get
    \begin{equation*}
        W(u) \ll \eps^{-1} X^{-\eps} e^{-|v|}.
    \end{equation*}
    
    Therefore, \eqref{B_f_*_integral_eq} becomes
    \begin{equation}\label{B_f_*_final_bound_eq}
        B_f^*(s, \chi) \ll_f \eps^{-1} X^{-\eps} \int_{-\infty}^\infty \left|\sum_{m\leq Y} \lambda_{f}(m) \chi(m) m^{-s+\eps+iv}\right| e^{-|v|} \dd v + \frac{1}{XY}.
    \end{equation}
    Denote $D := 2 \sqrt{N} Q T$ and $\L := 2\log{D}$. As a reminder, we have $s = \sigma+it$ with $\frac{1}{2}+\eps \leq \sigma \leq 1$, $\chi \pmod{q}$ primitive with $(q, N)=1$, and from now on we also assume $|t|\leq T$ and $q\leq Q$. Choose
    \begin{equation*}
        X = D \L \quad \quad \text{and} \quad \quad Y = D \L^3,
    \end{equation*}
    so that conditions \eqref{XY_condition_eq} and \eqref{X_squared_condition_eq} are satisfied. Then by \eqref{g_upper_bound_eq} the sum in \eqref{B_f_definition_eq} can be reduced to $n \leq Y$ up to an error of $O(D^{-2})$, so that combining it with \eqref{L_decomposition_eq} and \eqref{B_f_*_final_bound_eq} we get
    \begin{equation}\label{L_approx_functional_eq}
        \begin{split}
            L_{f}(s, \chi) = \sum_{n\leq Y}& \lambda_f(n) \chi(n)  n^{-s} g\left(\frac{n}{X}\right) \\
            &+ O_{f, \eps}\left(X^{-\eps} \int_{-\infty}^\infty \left| \sum_{n\leq Y} \lambda_f(n) \chi(n) n^{-s + \eps+iv} \right| e^{-|v|} \dd v + D^{-2} \right).
        \end{split}
    \end{equation}
    
    Let $1 \leq M \leq D$ and 
    \begin{equation*}
        M_f(s, \chi) := \sum_{m\leq M} b_f(m) \chi(m) m^{-s}, 
    \end{equation*}
    where the coefficients $b_f$ are inverses of $\lambda_f$ under Dirichlet convolution, i.e.\ are given by
    \begin{equation*}
        \sum_{n=1}^\infty b_f(n) n^{-s} := \prod_{p \text{ prime}} \left(1 - \lambda_f(p) p^{-s} + \xi(p)p^{-2s}\right) \quad \quad \text{for } \Re(s)>1, 
    \end{equation*}
    so that Deligne's bound implies $|b_f(n)| \leq d(n)$. From \eqref{L_approx_functional_eq} we obtain
    \begin{equation}\label{L_M_prelim_eq}
        \begin{split}
            L_{f}(s, \chi) M_f(s, \chi) = \sum_{n\leq MY}& a_f(n) \chi(n)  n^{-s} \\
            &+ O_{f, \eps}\left(\L \int_{-\infty}^\infty \left| \sum_{n\leq MY} a_f(n, v) \chi(n) n^{-s} \right| e^{-|v|} \dd v + D^{-2}M^{\frac{1}{2}} \right),
        \end{split}
    \end{equation}
    where
    \begin{equation*}
        a_f(n) := \sum_{\substack{cm=n \\ c \leq Y, m \leq M}} \lambda_f(c) g\left(\frac{c}{X}\right) b_f(m) \ll d_4(n)
    \end{equation*}
    by \eqref{g_upper_bound_eq} and similarly
    \begin{equation*}
        a_f(n, v) := \sum_{\substack{cm=n \\ c \leq Y, m \leq M}} \lambda_f(c) \left(\frac{c}{Y}\right)^{\eps+iv} b_f(m) \ll d_4(n).
    \end{equation*}
    For $n\leq M$, by \eqref{g_one_bound_eq} we have the more precise estimates
    \begin{equation*}
        a_f(n) = \sum_{cm=n} \lambda_f(c) b_f(m) \left(1 + O\left(e^{-X/c}\right) \right) = \1_{n=1} + O\left(d_4(n)D^{-2}\right)
    \end{equation*}
    and
    \begin{equation*}
        a_f(n, v) \ll \left(\frac{n}{Y}\right)^\eps \sum_{cm=n} \left|\lambda_f(c)\right| \left|b_f(m)\right| \leq \left(\frac{n}{Y}\right)^\eps d_4(n).
    \end{equation*}
    As a consequence,
    \begin{equation*}
        \left| \sum_{n\leq M} a_f(n, v) \chi(n)n^{-s} \right| \leq Y^{-\eps} \sum_{n\leq M} d_4(n) n^{-\frac{1}{2}} \ll Y^{-\eps} M^\frac{1}{2} \log^3(2M).
    \end{equation*}
    We want this to be $O_\eps(\L^{-2})$, which holds assuming for instance
    \begin{equation}\label{M_condition_eq}
        M \leq D^{\eps}.
    \end{equation}
    
    In that case, using the bounds above in \eqref{L_M_prelim_eq} gives
    \begin{equation}\label{L_M_cleaner_eq}
        \begin{split}
            L_{f}(s, \chi) M_f(s, \chi) = 1 &+ \sum_{M < n\leq MY} a_f(n) \chi(n)  n^{-s} \\
            &+ O_{f, \eps}\left(\L \int_{-\infty}^\infty \left| \sum_{M < n\leq MY} a_f(n, v) \chi(n) n^{-s} \right| e^{-|v|} \dd v + \L^{-1} \right).
        \end{split}
    \end{equation}
    To unify the treatment of the sum and integral, we consider the measure
    \begin{equation*}
        d\mu := \frac{1}{3}e^{-|v|}dv + \frac{1}{3} \delta(v),
    \end{equation*}
    where $dv$ denotes the Lebesgue measure on $\R$, $\delta(v)$ is the point measure at $v=0$, and the factor $\frac{1}{3}$ is a normalization that makes $\int_{-\infty}^\infty d\mu = 1$. Then \eqref{L_M_cleaner_eq} can be written as
    \begin{equation}\label{L_M_final_eq}
        L_{f}(s, \chi) M_f(s, \chi) - 1 \ll_{f, \eps} \L \int_{-\infty}^\infty \left| \sum_{M < n\leq MY} a_f(n, v) \chi(n) n^{-s} \right| \dd \mu(v) + \L^{-1}
    \end{equation}
    after redefining $a_f(n, 0) := a_f(n)$. For convenience, we also redefine $a_f(n, v) := 0$ for $n \leq M$ or $n > MY$. From now on the only properties about the coefficients we will use are that they do not depend on $s$ or $\chi$ and satisfy $a_f(n, v) \ll d_4(n)$.
    
    Now, assume that $\rho$ is a zero of $L_{f\otimes\chi}(s) = L_f(s, \chi)$ for some primitive $\chi \pmod{q}$ with $(q, N)=1$, $q\leq Q$, $\frac{1}{2}+\eps \leq \alpha \leq \Re(\rho) \leq 1$, and $|\Im(\rho)|\leq T$. If $D$ is sufficiently large in terms of $f$ and $\eps$ (which we can assume, otherwise \autoref{zero_density_prop} follows trivially), then \eqref{L_M_final_eq} implies
    \begin{equation*}
        \int_{-\infty}^\infty \left| \sum_{M < n\leq MY} a_f(n, v) \chi(n) n^{-\rho} \right| \dd \mu(v) \gg_{f, \eps} \L^{-1}.
    \end{equation*}
    We break the summation into dyadic segments $J < n \leq 2J$ for $J :=2^\ell M$, $0 \leq \ell \leq L := \lfloor \log{Y} / \log{2} \rfloor \ll \L$. Denote
    \begin{equation*}
        D_\ell(s, \chi) := \int_{-\infty}^\infty \left| \sum_{J < n\leq 2J} a_f(n, v) \chi(n) n^{-s} \right| \dd \mu(v).
    \end{equation*}
    Then for each such zero $\rho$ being counted there exists some $\ell$ such that
    \begin{equation}\label{D_lower_bound_eq}
        D_\ell(\rho, \chi) \gg_{f, \eps} \L^{-2}.
    \end{equation}
    If $\S_\ell$ denotes the set of relevant pairs $(\rho, \chi)$ satisfying \eqref{D_lower_bound_eq} and $R_\ell := |\S_\ell|$, then the total number $R$ of zeros being counted in \autoref{zero_density_prop} satisfies
    \begin{equation*}
        R \leq \sum_{\ell = 0}^L R_\ell \ll \L \max_{0 \leq \ell \leq L} R_\ell.
    \end{equation*}
    
    Raising $D_\ell(s, \chi)$ to a suitable power $2r$, for $r \geq 2$ (depending on $J$), we get
    \begin{equation*}
        D_\ell(s, \chi)^{2r} \leq \int_{-\infty}^\infty \left| \sum_{J < n\leq 2J} a_f(n, v) \chi(n) n^{-s} \right|^{2r} \dd \mu(v) =: \int_{-\infty}^\infty \left| \sum_{P < n\leq 2^r P} c_f(n, v) \chi(n) n^{-s} \right|^2 \dd \mu(v),
    \end{equation*}
    where $P := J^r$ falls in the segment
    \begin{equation}\label{P_condition_eq}
        Z \leq P \leq (MY)^2 + Z^\frac{3}{2}
    \end{equation}
    for $Z$ that will be chosen later satisfying 
    \begin{equation}\label{Z_condition_eq}
        MY \leq Z \ll D^{100}.
    \end{equation}
   Observe that an integer $r \geq 2$ such that \eqref{P_condition_eq} holds exists. From now on we choose
    \begin{equation*}
        M = D^\frac{\eps}{4},
    \end{equation*}
    so that $r$ is bounded in terms of $\eps$, by \eqref{Z_condition_eq}. Observe that condition \eqref{M_condition_eq} is automatically satisfied.
    
    By \eqref{D_lower_bound_eq}, we conclude that
    \begin{equation}\label{final_dirichlet_poly_eq}
        R_\ell \ll_{f, \eps} \L^{4r} \int_{-\infty}^\infty \sum_{(\rho, \chi) \in \S_\ell} \left| \sum_{P < n\leq 2^r P} c_f(n, v) \chi(n) n^{-\rho} \right|^2 \dd \mu(v).
    \end{equation}
    The coefficients satisfy $c_f(n, v) \ll_r d_{4r}(n)$, as $a_f(n, v) \ll d_4(n)$, so 
    \begin{equation}\label{coefficient_bound_eq}
        \sum_{P < n \leq 2^r P} |c_f(n, v)|^2 n^{-2\alpha} \leq P^{1-2\alpha} \L^B
    \end{equation}
    for some $B$ depending only on $r$ (and therefore $\eps$). We can now apply results about large values of Dirichlet polynomials to the integrand of \eqref{final_dirichlet_poly_eq}, after separating the zeros $\rho$ for each given $\chi$ into $O(\L)$ families of $1$-spaced points. Let $H:= Q^2T$.
    
    Suppose that $\frac{1}{2} + \eps \leq \alpha \leq \frac{3}{4}$. By \eqref{coefficient_bound_eq} and the large sieve inequality \cite[Theorem 9.13]{IK04}, we have
    \begin{equation*}
        \begin{split}
            R_\ell &\ll_{f, \eps} (P + H) P^{1-2\alpha} \L^C \\
            &\ll \left((MY)^{4(1-\alpha)} + Z^{3(1-\alpha)} + H Z^{1-2\alpha} \right) \L^C
        \end{split}
    \end{equation*}
    for some $C$ depending only on $\eps$, where we have used \eqref{P_condition_eq}. If $H \leq (MY)^{3-2\alpha}$, choose $Z = MY$, which trivially satisfies \eqref{Z_condition_eq}, so
    \begin{equation*}
        R_\ell \ll_{f, \eps} (MY)^{4(1-\alpha)} \L^C \leq D^{(4+\eps)(1-\alpha)} \L^{C+6}    
    \end{equation*}
    and we are done. If instead $H \geq (MY)^{3-2\alpha}$, then we can make the optimal choice $Z = H^{\frac{1}{2-\alpha}}$ and \eqref{Z_condition_eq} is satisfied, so we get
    \begin{equation*}
        R_\ell \ll_{f, \eps} \left((MY)^{4(1-\alpha)} + H^{\frac{3(1-\alpha)}{2-\alpha}} \right) \L^C \leq  \left(D^{(4+\eps)(1-\alpha)}+ H^{\frac{3(1-\alpha)}{2-\alpha}} \right) \L^{C+6}
    \end{equation*}
    as desired.
    
    Finally, suppose that $\frac{3}{4} \leq \alpha \leq 1$. By the Hal{\'a}sz-Montgomery-Huxley method \cite[Theorem 9.15]{IK04}, we have
    \begin{equation*}
        \begin{split}
            R_\ell \ll_{f, \eps} \left(P + R_\ell^\frac{2}{3} H^\frac{1}{3} P^\frac{1}{3} \right) P^{1-2\alpha} \L^C
        \end{split}
    \end{equation*}
    for some $C$ depending only on $\eps$, which implies
    \begin{equation*}
        \begin{split}
            R_\ell &\ll_{f, \eps} \left(P^{2-2\alpha} + H P^{4-6\alpha} \right) \L^{3C} \\
            &\ll \left((MY)^{4(1-\alpha)} + Z^{3(1-\alpha)} + H Z^{4-6\alpha} \right) \L^{3C},
        \end{split}
    \end{equation*}
    where again we have used \eqref{P_condition_eq}. If $H \leq (MY)^{2\alpha}$, we choose $Z = MY$, which trivially satisfies \eqref{Z_condition_eq}, and get
    \begin{equation*}
        R_\ell \ll_{f, \eps} (MY)^{4(1-\alpha)} \L^{3C} \leq D^{(4+\eps)(1-\alpha)} \L^{3C+3},    
    \end{equation*}
    so we are done. If instead $H \geq (MY)^{2\alpha}$, then we make the optimal choice $Z = H^\frac{1}{3\alpha-1}$, which in this case satisfies \eqref{Z_condition_eq}. Therefore, 
    \begin{equation*}
        \begin{split}
            R_\ell \ll_{f, \eps} \left( (MY)^{4(1-\alpha)} + H^\frac{3(1-\alpha)}{3\alpha-1} \right) \L^{3C} \leq  \left(D^{(4+\eps)(1-\alpha)}+ H^\frac{3(1-\alpha)}{3\alpha-1} \right) \L^{3C+3}
        \end{split}
    \end{equation*}
    as desired.
    
\end{proof}


\nocite{*}  
\bibliographystyle{abbrv}
\bibliography{references}

\end{document}